\documentclass[12pt]{amsart}
\usepackage{amssymb,amsmath}
\usepackage{geometry}    
\usepackage{mathrsfs}

\usepackage{graphicx}

\usepackage{vmargin}

\usepackage{tikz}  

\setmargrb{1in}{1in}{1in}{1in}

\newtheorem{theorem}{Theorem}[section]

\newtheorem{proposition}{Proposition}

\theoremstyle{definition}
\newtheorem{definition}{Definition}

\newtheorem{problem}{Problem}

\setlength{\parskip}{\medskipamount}

\newcommand{\ux}{\underline{\xi}}
\newcommand{\uv}{\underline{\varphi}}
\newcommand{\ov}{\overline{\varphi}}
\newcommand{\om}{\omega}
\newcommand{\wo}{\widehat{\omega}}
\newcommand{\os}{\omega^{\ast}}
\newcommand{\wos}{\widehat{\omega}^{\ast}}

	\begin{document}

\title[Optimality of two inequalities]{Optimality of two inequalities for exponents of Diophantine approximation}

\author{Johannes Schleischitz}

\thanks{Middle East Technical University, Northern Cyprus Campus, Kalkanli, G\"uzelyurt \\
	johannes@metu.edu.tr ; jschleischitz@outlook.com}

\dedicatory{Dedicated to the 50th birthday of Yann Bugeaud}

\begin{abstract}
	We investigate two inequalities of Bugeaud and Laurent, each involving 
	triples of classical exponents of Diophantine approximation
	associated to $\ux\in\mathbb{R}^n$.
	We provide a complete description of parameter triples 
	that admit equality for suitable $\ux$, which turns
	out rather surprising. 
	For $n=2$ our results agree with work
	of Laurent.
	Moreover, we establish lower bounds for the Hausdorff and packing dimensions
	of the involved $\ux$, and in special cases we can show they are sharp. 
	Proofs are based on the variational principle in parametric geometry of
	numbers, we enclose sketches of associated combined graphs (templates)
	where equality is feasible. A twist of our construction provides 
	refined information 
	on the joint spectrum of the respective exponent triples. 
\end{abstract}

\maketitle



{\footnotesize{

{\em Keywords}: exponents of Diophantine approximation, parametric geometry of numbers\\
Math Subject Classification 2010: 11J13, 11J82}}

\vspace{1mm}

\section{Introduction}

\subsection{Classical exponents of approximation} \label{intro}

Let $n\geq 1$ be an integer and $\ux=(\xi_{1},\ldots,\xi_{n})\in\mathbb{R}^n$
with $\{1,\xi_{1},\ldots,\xi_{n}\}$ linearly independent over $\mathbb{Q}$.
Let the (possibly infinite) exponents of approximation 
$\om$ and $\wo$ resp. be defined as the suprema of reals $u$ so that
the system
\begin{equation}  \label{eq:om}
1\leq x\leq X, \qquad \max_{1\leq i\leq n} | x\xi_{i}-y_{i}|\leq X^{-u}
\end{equation}
has a solution in integer vectors $(x,y_{1},\ldots,y_{n})$
for arbitrarily large and all large $X$, respectively.
 Let $\os, \wos$ be the supremum of $v$ so that
 \[
 1\leq \max_{1\leq i\leq n} |a_{i}|\leq X, \qquad  | a_{0}+a_{1}\xi_{1}+\cdots+a_{n}\xi_{n}|\leq X^{-v}
 \]
has a solution in integers $a_{i}$ 
for arbitrarily large $X$ and all large $X$, respectively.  
By variants of Dirichlet's Theorem, for any $\ux\in\mathbb{R}^n$ we have
\begin{equation}  \label{eq:diri}
\infty\geq \om\geq \wo\geq \frac{1}{n}, \qquad\qquad   \infty\geq \os\geq \wos\geq n. 
\end{equation}

\subsection{Two inequalities by Bugeaud and Laurent}  \label{se}

Let $n\geq 2$ be an integer.
Bugeaud and Laurent~\cite{bulau} established that every $\ux$
as above satisfies the estimates
\begin{equation}  \label{eq:1}  \tag{BL1}
\om \geq   \frac{ (\wos -1) \os } { ((n-2)\wos+1)\os+(n-1)\wos  } 
\end{equation}
and 
\begin{equation}  \label{eq:2}  \tag{BL2}
\os \geq \frac{  (n-1)\om + \wo +n -2 }{1-\wo}.
\end{equation}
In view of \eqref{eq:diri} they imply Khintchine's 
transference inequalities~\cite{khint}
\begin{equation}  \label{eq:khin}
\os\geq n\om+n-1, \qquad \om\geq \frac{\os}{(n-1)\os+n}.
\end{equation}
These are known to be sharp for any parameter pairs 
induced by $\om\in [1/n,\infty]$
resp. $\os\in[n,\infty]$.
Moreover, combining \eqref{eq:1}, \eqref{eq:2} implies 
non-trivial relations between $\om$ and $\wo$, and likewise
between $\os$ and $\wos$.
For $n=2$, they become
\begin{equation}  \label{eq:oja}
\om\geq \frac{\wo^{2}}{1-\wo},  \qquad\qquad \os\geq \widehat{\omega}^{\ast 2}-\wos, 
\end{equation}
already known by Jarn\'ik~\cite{jacj} and
are again sharp for any parameter pairs 
induced by $\om\in [1/n,\infty]$
resp. $\os\in[n,\infty]$.
For $n>2$, the implied relations turn out to be no longer best possible. Marnat and Moshchevitin~\cite{mamo} settled the sharp estimates
conjectured by 
Schmidt and Summerer~\cite{ssmj} along their discussion
of the ''regular graph'', who gave proofs
for $n=3$
themselves. See also Schmidt and Summerer~\cite{ssmh},  Moshchevitin~\cite{gemo} and Rivard-Cooke's 
thesis~\cite{rivart}.

As noticed by German and Moshchevitin~\cite{germosh}, the inequalities
\eqref{eq:1} resp. \eqref{eq:2} split into pairs of inequalities
respectively given by
\begin{equation}  \label{eq:ggmm}
\frac{1+\omega^{\ast -1} }{ 1+\omega^{-1} } \geq \wo \geq \frac{1-\widehat{\omega}^{\ast -1} }{n-1},
\qquad \frac{1+\os}{ 1+\om } \geq \wos \geq \frac{n-1}{1-\wo}.
\end{equation}
The respective left inequalities originate in 
Schmidt and Summerer~\cite{ss}, the right ones in German~\cite{german},
with alternative proofs given later 
in~\cite{ognm}, \cite{germosh} resp. \cite{ssa2}.
In particular, in case equality in \eqref{eq:1} or \eqref{eq:2}, both respective splitting inequalities
must be identities as well. Thus our results below also establish
optimality of these splitting inequalities for certain parameter ranges 
(shown for the resp. right inequalities already in~\cite{ssa2}).

\section{Main result: Description of equality cases}  \label{se2}

For $n=2$ all estimates \eqref{eq:1}, \eqref{eq:2}
are sharp by a result of Laurent~\cite{laurent},
as already pointed out in~\cite{bulau}. 
For general $n\geq 2$, when $\os\geq n, \wos=n$ resp. $\om\geq 1/n, \wo=1/n$ inequalities \eqref{eq:1} resp. \eqref{eq:2} simplify to \eqref{eq:khin}
and are sharp, as noticed in Section~\ref{se}.
Other than that,
when $n>2$ the optimality
of these estimates was very open. The main purpose of this note is to give a 
comprehensive description of equality cases. 
 
Let us start with \eqref{eq:1}.
We explicitly
determine the submanifold generated by the intersection of the spectrum
of $(\os, \wos, \om)\subseteq (\mathbb{R}\cup \{\infty\})^3$ induced by $\ux\in\mathbb{R}^n$ 
with the hypersurface in $(\mathbb{R}\cup \{\infty\})^3$ induced by equality in \eqref{eq:1}.
For given $n\geq 2$ and a real parameter $x\geq n$, define
\[
\rho_{1}(n,x)= \frac{x}{(n-1)x+n},
\]
and
\[
\rho_{2}(n,x)= \frac{(2n-4)x^{2} + (2n-1- \sqrt{(4n-4)x+1} )x - \sqrt{(4n-4)x+1}+1}
{2((n-2)^2x^{2}+(2n^2-6n+3)x+n^2-2n)}.
\]
By taking limits we put $\rho_{1}(n,\infty)=1/(n-1)$ and $\rho_{2}(n,\infty)=1/(n-2)$, where we consider $1/0=+\infty$. 
Observe $\rho_{1}(n,n)= \rho_{2}(n,n)=1/n$.
Our results will involve
Hausdorff and packing dimension, see~\cite{falconer} for an introduction.

\begin{theorem}  \label{t0}
	Let $n\geq 2$ be an integer. Then precisely for triples $(w^{\ast},\widehat{w}^{\ast},w)\subseteq (\mathbb{R}\cup\{\infty\})^3$ 
	that can be parametrized as
	\begin{equation}  \label{eq:ride}
	w^{\ast} \in [n,\infty], \qquad  w\in[\rho_{1}(n,w^{\ast}), \rho_{2}(n,w^{\ast}) ], \qquad 
	\widehat{w}^{\ast}= \frac{w^{\ast}(w+1)  }{ w^{\ast}-(n-2)ww^{\ast}-(n-1)w }
	\end{equation}
	there is $\ux\in \mathbb{R}^n$ which induces 
	equality in \eqref{eq:1} and
	\begin{equation} \label{eq:dieequ}
	\os=w^{\ast}, \qquad \wos=\widehat{w}^{\ast}, \qquad \om=w.
	\end{equation}
	In fact, 
	for each admissible parameter 
	triple $(w^{\ast},\widehat{w}^{\ast},w)$ as in \eqref{eq:ride}, the
	set of $\ux$ inducing \eqref{eq:dieequ} has Hausdorff dimension  
	at least $n-2$ and packing dimension at least $n-2+1/n$.
\end{theorem}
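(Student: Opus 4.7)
The plan is to work in the framework of parametric geometry of numbers and to apply the variational principle to a family of carefully constructed templates; the abstract already announces this strategy and the combined graph sketches.

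First, I would translate the target values of the classical exponents into asymptotic conditions on a template $\mathcal{T}=(P_1,\ldots,P_{n+1})$ on $[q_0,\infty)$. The $P_j$ are piecewise linear, ordered $P_1\leq\cdots\leq P_{n+1}$, sum to zero, and carry slopes in the discrete set permitted by the Schmidt--Summerer axioms. The four classical exponents attached to $\ux$ are read off as limsups and liminfs of $-P_1(q)/q$ and $P_{n+1}(q)/q$, normalized so that Dirichlet's theorem \eqref{eq:diri} reflects itself in the standard caps on $\liminf P_{n+1}/q$ and $\limsup P_1/q$. Under this dictionary, equality in \eqref{eq:1} forces a rigid alternation pattern between a ``primal'' and a ``dual'' regime along the $q$-axis.

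Next, for each admissible triple $(w^\ast,\wos,w)$ as in \eqref{eq:ride} I would construct an explicit template of this form as a self-similar pattern on a log-scale consisting of two recurring regimes: a dual regime where $P_{n+1}$ rises steeply and contributes the $\limsup$ value $\os=w^\ast$, and a primal regime where $-P_1$ rises steeply and contributes $\om=w$. The intermediate functions $P_2,\ldots,P_{n-1}$ are kept as flat as possible to absorb the remaining slope. The ratio of the two regime lengths is the free parameter, and imposing equality in \eqref{eq:1} together with the target values of $\os$ and $\om$ leads, by elementary linear algebra on the slope vectors, to the rational expression for $\wos$ displayed in \eqref{eq:ride}. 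After verifying admissibility, I would invoke the variational principle to obtain $\ux\in\mathbb{R}^n$ whose combined graph of successive minima is asymptotic to $\mathcal{T}$, which by the dictionary satisfies \eqref{eq:dieequ} and equality in \eqref{eq:1}. The same principle supplies lower bounds for Hausdorff and packing dimension of the realization set in terms of dimensional invariants of $\mathcal{T}$: the $n-2$ intermediate functions left essentially free contribute $n-2$ to the Hausdorff dimension, while the extra $1/n$ in the packing dimension comes from a standard sparse-oscillation perturbation of $P_1$ with the minimal admissible slope $1/n$, which does not alter any of the four exponents.

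The hard part is verifying, across the full parameter range $w\in[\rho_1(n,w^\ast),\rho_2(n,w^\ast)]$, that the proposed template simultaneously satisfies all Schmidt--Summerer axioms: the ordering $P_1\leq\cdots\leq P_{n+1}$, the discrete slope constraints, and the non-negativity of all regime lengths. The lower endpoint $\rho_1(n,w^\ast)=w^\ast/((n-1)w^\ast+n)$ corresponds to the Khintchine extremal template with $\wos=n$; the upper endpoint $\rho_2$ should arise as the largest value of $w$ for which an intermediate slope can still be packed in without forcing two adjacent $P_j$'s to cross, and the radical appearing in the formula for $\rho_2(n,w^\ast)$ reflects the resulting discriminant condition. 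Finally, the ``precisely'' direction of the theorem requires a systematic enumeration of all combinatorial templates compatible with equality in \eqref{eq:1}, showing that each one parametrizes into \eqref{eq:ride}; this combinatorial rigidity is the real technical core of the argument.
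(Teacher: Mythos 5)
Your proposal does follow the paper's general strategy (translate the exponents into liminf/limsup data of the successive minima functions, build explicit periodic templates, and invoke the variational principle), but two essential parts of the theorem are not actually proved. First, the ``precisely'' direction: your discriminant heuristic for $\rho_{2}$ only explains why \emph{your particular family} of two-regime templates ceases to exist beyond some threshold; it does not show that \emph{no} combined graph whatsoever can realize equality in \eqref{eq:1} when $w>\rho_{2}(n,w^{\ast})$. You yourself defer this as ``the real technical core,'' but it cannot be deferred — it is half the statement. The paper handles it by a separate rigidity argument (Theorem~\ref{t10}): one shows that equality in the reformulated inequality \eqref{eq:3} forces, along a sequence $q_{0}\to\infty$ with $L_{n+1}(q_{0})\approx t q_{0}$, a specific local configuration of $L_{n},L_{n+1},L_{1},\dots,L_{n-1}$ (asymptotic equality in every intermediate estimate), and then that for $\mu<\mu_{0}$ this configuration cannot be continued to the right without either violating $\uv_{n+1}=\sigma$ or the convexity/ordering constraints; this case analysis on arbitrary combined graphs, not an enumeration of template combinatorics for your construction, is what pins down $\rho_{2}$ exactly. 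Relatedly, the exact form of the admissible range requires quantitative facts such as $\mu_{0}\geq -(t^{2}+(2n+1)t)/(n^{2}-t)$ (Proposition~\ref{pq} in the paper), which guarantee the switch points of the period are correctly ordered ($\tilde{q}_{2}\leq\tilde{q}_{3}\leq q_{1}$); ``an intermediate slope can still be packed in'' is not a substitute for these verifications.

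Second, the metrical claims are not justified by your mechanism. In the variational principle the Hausdorff and packing dimensions are the lower and upper limits of the \emph{average contraction rate} of the template, so the bound $n-2$ comes from evaluating $\delta(\mathbf{P},q)\in\{n,n-1,n-2\}$ on the three subintervals of the period and averaging — not from ``$n-2$ intermediate functions left essentially free'' (in the paper's template $P_{1},\dots,P_{n-1}$ are in fact glued together throughout). More seriously, the packing bound $n-2+1/n$ does not come from a ``sparse-oscillation perturbation of $P_{1}$ with slope $1/n$ that does not alter the exponents'': the upper contraction rate is computed along intervals ending at the interior switch points $\tilde{q}_{1,N},\tilde{q}_{3,N}$, yielding the explicit quantities $B,C$ of \eqref{eq:bb}, and the inequality $\max\{B,C\}\geq n-2+1/n$ is a nontrivial computation whose extremal case arises only in the limit $t\to1^{-}$, $\mu=\mu_{0}$. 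A perturbation sparse enough to leave $\uv_{1},\uv_{n+1},\ov_{n+1}$ untouched but long enough to raise the upper average contraction rate by a definite amount is exactly the kind of claim that needs (and here lacks) a computation; as stated it would also tend to disturb the liminf/limsup values it must preserve. So the skeleton is right, but both the optimality of $\rho_{2}$ and the dimension bounds remain genuine gaps.
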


The identity in \eqref{eq:ride} is a reformulation of 
equality in \eqref{eq:1} and its explicit statement is purely conventional. 
The lower bound 
$\rho_{1}$ reflects identity in the right estimate of \eqref{eq:khin}, so
it is as small as it can possibly be. 
On the other hand, the surprising bound $\rho_{2}$
is strictly smaller than the bound $(w^{\ast}-n+1)/n$ from the left estimate 
in \eqref{eq:khin}, unless in trivial cases. 
Thus, by the optimality of Khintchine's estimates,
when $w^{\ast}\in (n,\infty]$ and
$w\in (\rho_{2}, (w^{\ast}-n+1)/n]$, there are $\ux$ whose associated exponents
satisfy $\os=w^{\ast}, \om=w$ but there is no $\ux$
with additional equality in \eqref{eq:1}. 
We remark that if $w=\rho_{2}$, then
as $w^{\ast}\to \infty$ also $\widehat{w}^{\ast}\to\infty$, and 
$w\to 1/(n-2)$. 
We point out that for $\ux$ as in the theorem inducing \eqref{eq:dieequ},
the remaining exponent $\wo$ can be determined as
\begin{equation}  \label{eq:istg}
\wo=\frac{1+w^{\ast -1} }{ 1+w^{-1} }
\end{equation}
by \eqref{eq:ggmm}. 
Consequently, if $w=\rho_{2}$ then $\wo\to 1/(n-1)$ as $w^{\ast}\to\infty$. 
As $w^{\ast}\to n$ from above, both $\rho_{1}, \rho_{2}$ tend to $1/n$
as it needs to be. For refinements of the metrical claim see
Section~\ref{ela} below.

For $n=2$ the claim agrees with the findings of Laurent~\cite{laurent}
discussed above.
For $n=2$ and $\om=\rho_{2}$
we have equality in the inequalities of \eqref{eq:oja}, thereby
we obtain a regular graph as mentioned in Section~\ref{se}.
For $n>2$, no regular graph can appear upon identity in \eqref{eq:1},
unless in the trivial case $w^{\ast}=\widehat{w}^{\ast}=n, w=1/n$.

Our results suggest that solutions to the following questions 
are in reach.

\begin{problem}  \label{problem1}
	For given $\os$ and $\om>\rho_{2}(n,\os)$ 
	find sharp estimates improving \eqref{eq:1}. 
	Ideally, determine the spectrum of
	$(\os, \wos, \om)\subseteq (\mathbb{R}\cup \{\infty\})^3$. 
\end{problem}

We turn towards
the dual estimates \eqref{eq:2}. For $n\geq 2$ and $x\geq 1/n$, define
\[
\tau_{1}(n,x)= nx+n-1,
\]
and
\[
\tau_{2}(n,x)= \frac{x^2}{2} +\left( n-\frac{1}{2}+ \frac{ \sqrt{x (x+4n-4)} }{2} \right) x
+ \frac{ \sqrt{x (x+4n-4)} }{2} + n-2.
\]
By taking limits we extend it to $\tau_{j}(n,\infty)=\infty$ for $j=1,2$. 
Our result reads as follows.

\begin{theorem} \label{t2}
	Let $n\geq 2$ be an integer. Then precisely for triples $(w,\widehat{w},w^{\ast})\subseteq (\mathbb{R}\cup\{\infty\})^3$ 
	that can be parametrized by the properties
	\begin{equation}  \label{eq:ride0}
	w\in [1/n,\infty], \qquad  w^{\ast}\in[\tau_{1}(n,w), \tau_{2}(n,w) ], \qquad 
	\widehat{w}= \frac{w^{\ast}-(n-1)w-n+2  }{ 1+w^{\ast}}
	\end{equation}
	there is $\ux\in \mathbb{R}^n$ which induces 
	equality in \eqref{eq:2} and
	\begin{equation} \label{eq:dieequ0}
	\om=w, \qquad \wo=\widehat{w}, \qquad \os=w^{\ast}.
	\end{equation}
	For each suitable parameter triple $(w,\widehat{w},w^{\ast})$, 
	the set of
	associated $\ux$ inducing \eqref{eq:dieequ0} has packing dimension at least $1/2$, and positive Hausdorff dimension
	if $w<\infty$.
\end{theorem}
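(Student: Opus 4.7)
The plan is to work within parametric geometry of numbers, translating the four exponents $\om,\wo,\os,\wos$ into suitably normalised liminf/limsup quantities of a combined graph (template) $\phi=(\phi_{1},\ldots,\phi_{n+1})$ on $[0,\infty)$ describing the logarithmic successive minima of the parametric family of convex bodies associated to $\ux$. A preliminary observation, used throughout, is that equality in \eqref{eq:2} forces both identities
\[
\wos=\frac{1+\os}{1+\om}, \qquad \wos=\frac{n-1}{1-\wo}
\]
from \eqref{eq:ggmm}. The first rearranges to the stated formula for $\widehat{w}$ in \eqref{eq:ride0}, and both combined rigidly constrain the slopes and oscillation pattern of any admissible template realising \eqref{eq:dieequ0}.

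For the necessity direction, suppose $\ux$ achieves equality with $\om=w$ and $\os=w^{\ast}$. The bound $w^{\ast}\geq\tau_{1}(n,w)$ is exactly Khintchine's inequality \eqref{eq:khin}, hence automatic. To derive the upper bound $w^{\ast}\leq\tau_{2}(n,w)$, I would analyse admissible templates: the forced identities above fix the shallow slope of $\phi_{1}$ to $-1/(n-1)$ and dually the steep slope of $\phi_{n+1}$, while the Schmidt--Summerer zero-sum normalisation $\sum\phi_{i}\equiv 0$ and the Roy slope conditions restrict admissible periodic patterns to a one-parameter family. Balancing the descent of $\phi_{1}$ on its steep phase (producing $\om=w$) against the conjugate excursion of $\phi_{n+1}$ (producing $\os=w^{\ast}$) leads to a quadratic relation in $w^{\ast}$ whose positive root is $\tau_{2}(n,w)$; the discriminant $w(w+4n-4)$ accounts for the square-root term in its explicit form.

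For sufficiency, for each admissible triple I would construct an eventually periodic template with two alternating phases: a long phase where $\phi_{1}$ descends with slope $-1/(n-1)$ tuned to produce $\wo=\widehat{w}$, and a short steep phase whose length relative to the period is calibrated so that the averaged slope yields $\om=w$; by duality $\phi_{n+1}$ then automatically delivers $\os=w^{\ast}$. The intermediate coordinates $\phi_{2},\ldots,\phi_{n}$ are interpolated so that the ordering, zero-sum, and Roy slope restrictions of a valid template all hold. The variational principle then provides $\ux\in\mathbb{R}^{n}$ whose combined graph matches this template up to bounded error, yielding \eqref{eq:dieequ0} with \eqref{eq:ride0}. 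The main obstacle I anticipate is verifying admissibility up to the extremal boundary $w^{\ast}=\tau_{2}(n,w)$, where the two phases become maximally asymmetric and the interpolation of the intermediate coordinates is delicate; checking this should reduce to the quadratic identified in the necessity step.

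For the metric statement, I would introduce a free parameter into the construction by letting one internal break point of the template vary within an interval without perturbing the limit quantities defining $\om,\wo,\os,\wos$, analogous to the $(n-2)$-parameter deformation underlying Theorem~\ref{t0} but restricted by the tighter constraints imposed by equality in \eqref{eq:2}. Choosing this parameter from a suitable Cantor-type subset and transferring mass via the variational principle yields packing dimension at least $1/2$ in $\ux$-space. When $w<\infty$, the period of the template is bounded, so the construction admits a self-similar refinement producing positive Hausdorff dimension; for $w=\infty$ self-similarity breaks down, explaining why only the packing bound is asserted in that case. Propagating the dimension bounds from the template parameter space to $\ux$-space without loss is the other nontrivial point.
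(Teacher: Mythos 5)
Your overall framework is the paper's: translate the exponents via \eqref{eq:a}, \eqref{eq:b} into template language, build (pre)periodic $n$-templates, and transfer back to $\ux$ by Theorem~\ref{dop} and the variational principle. However, two essential steps are missing or would not work as described. First, the necessity of $w^{\ast}\leq\tau_{2}(n,w)$ (the ``precisely'' half of the theorem): you assert that equality in \eqref{eq:2}, via the split identities \eqref{eq:ggmm}, restricts admissible templates to a one-parameter periodic family from which $\tau_{2}$ drops out. Nothing forces the combined graph of an arbitrary $\ux$ attaining equality to be even asymptotically periodic; what the paper proves (Section~\ref{non2}, dualizing the argument of Theorem~\ref{t10} and the proof of \eqref{eq:4} in~\cite{ichnyj}) is that equality forces a specific local configuration of $L_{1},L_{2}$ along a sequence of scales $q\to\infty$, and that for $\ov_{n+1}=\nu>g_{n}(s)$ this configuration cannot be continued to the next meeting point of $L_{1},L_{2}$ without contradicting the definition of the extremal values. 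Without an argument of this local-rigidity-plus-continuation type, emptiness outside the range is unproven. Relatedly, the slope $-1/(n-1)$ you assign to $\phi_{1}$ is not an admissible template slope (admissible slopes are $+1$ and $-k/(n+1-k)$); the actual construction uses $-n,+1,-1/n,-2/(n-1)$, and it is the interior break point (the paper's $\tilde{q}_{3}$, where a block of components travels with slope $-1/n$) that provides the degree of freedom sweeping $w^{\ast}$ through all of $[\tau_{1},\tau_{2}]$ while keeping $\wo$ pinned; your two-phase description of $\phi_{1}$ alone does not visibly produce this range.

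Second, the metric claims. The paper introduces no Cantor-type parameter sets: the bounds follow directly from Theorem~\ref{deep} by computing the local contraction rates of the constructed template (equal to $n$, $1$, $0$ on the three pieces of each period) and hence $\underline{\delta},\overline{\delta}$, giving the explicit quantities $D,E,F$ of Section~\ref{ela}; positivity of the Hausdorff bound for $w<\infty$ is exactly $D_{s,\nu}>0$ for $s>-n$, and the constant $1/2$ is the limit of $F_{s,\nu_{0}}$ as $s\to-n$. Your proposed mechanism (varying a break point over a Cantor set, then ``propagating'' dimension to $\ux$-space) never produces these constants and reflects a misreading of how the variational principle is used: it already bounds the dimensions of the set of $\ux$ whose combined graph stays within bounded distance of the fixed template, so there is no transference step to worry about. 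Likewise, the failure of positive Hausdorff dimension at $w=\infty$ is not a breakdown of self-similarity but the fact that $\dim_{H}\{\ux:\om\geq w\}=(n+1)/(w+1)\to 0$ (Jarn\'ik), which the contraction-rate bound $D\to 0$ as $s\to -n$ reproduces.
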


Analogous remarks as for Theorem~\ref{t0} apply. The lower bound
$\tau_{1}$ reflects equality in the left estimate 
of \eqref{eq:khin}, whereas $\tau_{2}$ is strictly 
smaller than the value induced by equality in the right inequality of \eqref{eq:khin}, 
unless if $\om=\os=n$.  
Let $w^{\ast}=\tau_{2}$. Then, 
as $w\to \infty$ we have $\widehat{w}\to 1$ and $w^{\ast}\to \infty$, where
the latter agrees with the obvious estimate $\os\geq \om$. Note that $1$ is the largest value $\wo$ can attain. 
For $\ux$ as in Theorem~\ref{t2}, the missing
exponent $\wos$ can again be evaluated as
\begin{equation} \label{eq:istg2}
\wos=\frac{1+w^{\ast}}{ 1+w }
\end{equation}
by \eqref{eq:ggmm}.
If $\os=\tau_{2}(n,\om)$ then
again as $\om\to \infty$ also $\wos\to\infty$, and iff $n=2$ this leads to the regular graph.
We formulate the analogous questions to Problem~\ref{problem1}.
	
	\begin{problem}  \label{problem2}
		For $\os>\tau_{2}(n,\om)$, 
		find sharp estimates improving \eqref{eq:2}.
		Ideally, determine the spectrum of
		$(\om, \wo, \os)\subseteq (\mathbb{R}\cup \{\infty\})^3$.
	\end{problem}

As partial results to Problems~\ref{problem1},~\ref{problem2},
for any triples satisfying
\begin{equation} \label{eq:A1}
w^{\ast}\in [n,\infty], \quad w\in [\rho_{1}(n,w^{\ast}), \rho_{2}(n,w^{\ast})], \quad \widehat{w}^{\ast}\in \left[n,\frac{w^{\ast}(w +1)}{w^{\ast}-(n-2)w w^{\ast} -(n-1)w}\right]
\end{equation}
resp.
\begin{equation}   \label{eq:A2}
w \in [1/n,\infty], \quad w^{\ast}\in [\tau_{1}(n,w),\tau_{2}(n,w)] , \quad
\widehat{w}\in \left[\frac{1}{n}, \frac{w^{\ast}-(n-1)w-n+2}{1+w^{\ast}}\right]
\end{equation}
for suitable $\ux\in \mathbb{R}^n$ we still get \eqref{eq:dieequ} resp. \eqref{eq:dieequ0}. Clearly the ranges for $\wos, \wo$ are optimal.
See Section~\ref{ee} for the proof. Problems~\ref{problem1},~\ref{problem2} can be considered partial problems
towards finding the entire spectrum in $\mathbb{R}^{2n+2}$ of all
extremal values of successive minima exponents, which is wide open for $n>2$.
See Section~\ref{ppp} for details. 

We outline the rest of the paper. In Section~\ref{pgnu} we 
introduce parametric geometry of numbers in the notion of Schmidt and Summerer~\cite{ssmh} and formulate a special case the variational principle by Das, Fishman, Simmons, Urba\'nski~\cite{dfsu1},~\cite{dfsu2} within this framework. We append relations to notions of Roy~\cite{royann},~\cite{royspec} and Schmidt, Summerer~\cite{ssma}.
In Section~\ref{se4} we reformulate 
Theorems~\ref{t0},~\ref{t2} into this language, and
refine the metrical claims. In
Sections~\ref{se5},~\ref{se6} these claims and 
\eqref{eq:A1}, \eqref{eq:A2} are proved using the prerequisites from~\cite{dfsu1},~\cite{dfsu2}.
It is worth noting that everything except from
the metrical claims can be alternatively obtained from
Roy~\cite{royann},~\cite{royspec} in place of~\cite{dfsu1},~\cite{dfsu2}, see Section~\ref{roy} for details. Finally
we close with some remarks interconnecting our work with~\cite{bulau},~\cite{royspec} in Section~\ref{comments}.

\section{Parametric geometry of numbers}  \label{pgnu}

\subsection{Parametric functions and their extremal values} \label{ppp}

Let us interpret the simultaneous approximation problem
\eqref{eq:om} as a parametric successive minima 
problem. For $q>0$ a parameter,
let $K(q)\subseteq \mathbb{R}^{n+1}$ be the box of points 
$(z_{0},z_{1},\ldots, z_{n})$ that satisfy
\[
\vert z_{0}\vert \leq e^{nq},\qquad \max_{1\leq i\leq n} \vert z_{i}\vert \leq e^{-q}.
\] 
Further let $\Lambda=\Lambda_{\underline{\xi}}$ be the lattice consisting of all points
of the form
$\{ (x,\xi_{1}x-y_{1}, \ldots, \xi_{n}x-y_{n}): x,y_{i}\in\mathbb{Z}\}$. 
Denote by  $\lambda_{1}(q),\ldots,\lambda_{n+1}(q)$
the successive minima
of $K(q)$ with respect to
$\Lambda$, to obtain functions of $q$. 
For $1\leq j\leq n+1$, derive $\varphi_{j}(q)=\log \lambda_{j}(q)/q$
and define the lower and upper limits
\[
\uv_{j}= \liminf_{q\to\infty} \varphi_{j}(q),
\qquad  \ov_{j}= \limsup_{q\to\infty} \varphi_{j}(q).
\]
These quantities lie within the interval $[-n,1]$.
Schmidt and Summerer~\cite[(1.8), (1.9)]{ssmh} observed
they are connected to exponents of
Section~\ref{intro} via the transference identities
\begin{equation}  \label{eq:a}  \tag{T1}
(1+\om)(n+\uv_{1})=(1+\wo)(n+\ov_{1})=n+1,
\end{equation}
and
\begin{equation} \label{eq:b}   \tag{T2}
(1+\os)(1-\ov_{n+1})=(1+\wos)(1-\uv_{n+1})=n+1.
\end{equation}
Hereby we mean $\om=\infty$ iff $\uv_{1}=-n$ and likewise for
other identities. See~\cite[Corollary~8.5]{og} for a generalization.
In terms of $\ov_{j}, \uv_{j}$ 
Khintchine's estimates \eqref{eq:khin} simply read
\begin{equation}  \label{eq:khin2}
\ov_{n+1} \geq -\frac{ \uv_{1} }{n}, \qquad \uv_{1} \leq 
-\frac{ \ov_{n+1} }{n}.
\end{equation} 
The deduction from \eqref{eq:a}, \eqref{eq:b}
is carried out in Remark~(b) in~\cite{ss}.
Equivalent formulations of \eqref{eq:oja}
obtained
similarly can be found in
~\cite[(1.20), (1.20$^{\prime}$)]{ssmh}. 
%
%
Moreover \eqref{eq:a}, \eqref{eq:b}
imply that \eqref{eq:1}, \eqref{eq:2} are respectively 
equivalent to
\begin{equation}  \label{eq:3}  \tag{SS1}
n\uv_{1} +\ov_{n+1} \leq -\uv_{n+1}\cdot \left(\frac{n+1}{n-1}+\uv_{1}+\frac{2}{n-1}\ov_{n+1} \right)
\end{equation}
and
\begin{equation} \label{eq:4}   \tag{SS2}
n\ov_{n+1} +\uv_{1} \geq -\ov_{1}\cdot \left(\frac{n+1}{n-1}+\ov_{n+1}+\frac{2}{n-1}\uv_{1} \right).
\end{equation}
This was again already observed by Schmidt and Summerer~\cite{ssmh} who 
provided independent proofs of \eqref{eq:3}, \eqref{eq:4} based on
parametric geometry of numbers, and thereby
new proofs \eqref{eq:1}, \eqref{eq:2}.
Other proofs of \eqref{eq:1}, \eqref{eq:2} that again rely on parametric geometry of numbers came as a byproduct in~\cite{ichnyj}, see~\cite[(23)]{ichnyj} and~\cite[Remark~6]{ichnyj}.
The latter proofs from~\cite{ichnyj} give some information
on $(\varphi_{1}(q), \ldots, \varphi_{n+1}(q))$ as a function 
$[0,\infty)\to [-n,1]^{n+1}$, i.e. on the combined graph
defined in Section~\ref{roy} below,
in case of equality. This observation inspired this note.

In recent years, much work has been done on the joint spectrum 
of exponents, that is the subset of $\mathbb{R}^{2n+2}$ that occurs as
\[
(\uv_{1},\ldots,\uv_{n+1},\ov_{1},\ldots,\ov_{n+1})\in\mathbb{R}^{2n+2}
\]
when $\ux\in\mathbb{R}^n$ runs through all vectors that are 
$\mathbb{Q}$-linearly independent
together with $\{1\}$. See for example~\cite{rivart},~\cite{rivroy},~\cite{royspec},~\cite{roytop},~\cite{ssmh},~\cite{ssa2},~\cite{ssma}, in particular the detailed exposition in Section 1 of~\cite{roytop}. In view of the equivalent claims \eqref{eq:3}, \eqref{eq:4},
our claims in Section~\ref{se2} contribute to this
area by providing new results on the projection to 
the three-dimensional spaces with coordinate
variables $(\uv_{1},\uv_{n+1},\ov_{n+1})$
and $(\uv_{1},\ov_{1},\ov_{n+1})$, respectively.
See Theorems~\ref{t1},~\ref{t3} below. We remark that for $n=2$, 
a complete description of the joint spectrum via a system of inequalities was
given in~\cite[Theorem~11.5]{roytop}, thereby containing implicitly the work of Laurent~\cite{laurent}, Schmidt, Summerer~\cite{ssma} as well as the case $n=2$ of our Theorems~\ref{t0},~\ref{t2}. However, the explicit deductions from~\cite{roytop} seem cumbersome.

\subsection{$n$-templates and the variational principle} \label{roy}

From the functions $\varphi_{j}(q)$ and $\lambda_{j}(q)$ 
associated to $\ux\in\mathbb{R}^n$ as
defined in Section~\ref{ppp}, we
derive $L_{j}(q)= q\varphi_{j}(q)= \log \lambda_{j}(q)$.
These functions are piecewise linear with slopes among $\{-n,1\}$, and by Minkowski's Second Convex Body Theorem their sum is uniformly bounded
\begin{equation}  \label{eq:bsum}
\left|\sum_{j=1}^{n+1} L_{j}(q)\right| \leq C(n), \qquad\qquad q\in[0,\infty).
\end{equation}
The values $\uv_{j}, \ov_{j}$ are just the 
extremal average slopes
of the $L_{j}$ in a start segment, i.e.
\begin{equation}  \label{eq:idf}
\uv_{j}= \liminf_{q\to\infty} \frac{L_{j}(q)}{q} , \qquad
\ov_{j}= \limsup_{q\to\infty} \frac{L_{j}(q)}{q}, \qquad\qquad (1\leq j\leq n+1).
\end{equation}

We call $\textbf{L}_{\ux}(q)=(L_{1}(q),\ldots,L_{n+1}(q))$ 
on $q\in[0,\infty)$ the combined graph associated to $\ux\in\mathbb{R}^n$.
We approximate it
by easier systems $\textbf{P}=(P_{1}(q),\ldots,P_{n+1}(q))$ without error term as in \eqref{eq:bsum}, but where we may glue consecutive functions $P_{j}$ on intervals where they differ by a small amount. 
Let 
\[
Z(j) = \left\{ j , j-1-n \right\}, \quad P_{0}(q)=-\infty, \quad P_{n+2}(q)=+\infty.
\]
Then following~\cite[Definition~5.1]{dfsu2}, an 
elegant formal description of the family of functions
we consider can be stated as follows. 

\begin{definition} \label{defnt}
	We call a continuous, piecewise-linear map 
	$\textbf{P}: [0,\infty)\to \mathbb{R}^{n+1}$ 
	an $n$-template if the component functions $P_{j}(q)$ 
	satisy $(i)-(iv)$ below.
	\begin{enumerate}
		\item[$(i$)] We have	
		\begin{equation} \label{eq:sumv}
		\sum_{j=1}^{n+1} P_{j}(q) = 0, \qquad\qquad q\in [0,\infty).
		\end{equation}
		\item[$(ii$)] $P_{1}(q)\leq P_{2}(q)\leq \cdots \leq P_{n+1}(q)$ for all $q\in[0,\infty)$
		\item[$(iii$)] For any $q\in[0,\infty)$ and $1\leq j\leq n+1$, if $P_{j}$ is 
		differentiable at $q$ then
	    \[
	    -n \leq P_{j}^{\prime}(q) \leq 1.
	    \]    
	    	\item[$(iv)$] For $j=0,1,\ldots,n+1$ and every interval where $P_{j}<P_{j+1}$ holds, the function
	    \[
	    F_{j}(q) = P_{1}(q)+P_{2}(q)+\cdots+P_{j}(q)
	    \]
	    is convex and has slopes $F_{j}^{\prime}$ in $Z(j)$.	
	\end{enumerate} 
\end{definition}

In place of $(i)$ it would suffice to demand $P_{1}(0)=0$.
Indeed $\textbf{P}(q)$ is an $n$-template iff $\textbf{P}(q)/n$
is a balanced $n\times 1$-template in~\cite{dfsu1},~\cite{dfsu2}. 
Each $P_{j}$ is one-sided differentiable on $q\in [0,\infty)$ 
with slopes within the finite set $\{1\}\cup \{-k/(n+1-k): 0\leq k\leq n\}$.
We further remark that generalizing $3$-systems defined by Schmidt, Summerer~\cite{ssma} naturally to $(n+1)$-systems for $n\geq 1$, 
leads to special
cases of $n$-templates, with slopes of 
the $P_{j}$ restricted to $\{-n,1\}$ as for $L_{j}$.
More precisely, equipping the space of functions 
$[0,\infty)\to \mathbb{R}^{n+1}$ with 
the supremum norm,
the closure of the set of $(n+1)$-systems in~\cite{ssma} becomes 
our set of $n$-templates.  
By $(i)$, the sum of the slopes at points of differentiability
vanishes as well. In the special case of $(n+1)$-systems 
this means one component decays with slope $-n$ while the remaining
$P_{j}$ rise with slope $+1$. 
We further want to notice that upon applying some affine map, 
our $n$-templates correspond to the generalized $(n+1)$-systems
defined by Roy~\cite[Definition~5.1]{royspec}. So an equivalent
formulation of $n$-templates can be derived from a twist of~\cite[Definition~4.1]{royspec}.
This also implies that
the results in~\cite{royann},~\cite{royspec},~\cite{roytop}, remain
basically valid and we will use them occasionally below.

As noticed in~\cite{dfsu2},
the following special case of~\cite[Theorem~5.2]{dfsu2}
is already covered by
Schmidt and Summerer~\cite{ssmh} and Roy~\cite[Corollary 4.7]{royspec}.

\begin{theorem}  \label{dop}
	For any $\ux\in\mathbb{R}^n$, its associated combined graph
	has uniformly bounded distance from some suitable $n$-template $\textbf{P}$, i.e.
	\begin{equation}  \label{eq:jaroy}
	\|\textbf{L}_{\ux}(q)-\textbf{P}(q)\|= \max_{1\leq j\leq n+1} |P_{j}(q)-L_{j}(q)|\leq C(n), \qquad\qquad q\in[0,\infty).
	\end{equation}
	Conversely, for any 
	$n$-template
	$\textbf{P}$, there is $\ux\in\mathbb{R}^n$ 
	inducing \eqref{eq:jaroy} for some effective $C(n)$.
\end{theorem}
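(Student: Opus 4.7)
The theorem is already covered by the cited literature, so the plan is to outline the structure of the argument rather than reproduce it in detail; the two directions are of rather different character.

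For the forward direction, starting from $\ux\in\mathbb{R}^n$, I would verify that the piecewise-linear functions $L_j(q)=\log\lambda_j(q)$ satisfy the template axioms up to a bounded error. The ordering $(ii)$ is automatic, while the summation property $(i)$ holds up to an additive constant by Minkowski's second theorem, cf.\ \eqref{eq:bsum}; an affine correction of order $O(1)$ enforces it exactly. The slope bound $(iii)$ reflects that $K(q)$ dilates at rate $e^q$ in one coordinate and contracts at rate $e^{-q}$ in $n$ coordinates, so any successive minimum grows no faster than $1$ and decays no faster than $-n$. The convexity and slope constraints in $(iv)$ arise from expressing $F_j=L_1+\cdots+L_j$ as the logarithm of the covolume of a suitable primitive rank-$j$ sublattice of $\Lambda$, which can only be replaced by a sublattice of smaller covolume at the transfer points where $L_j(q)=L_{j+1}(q)$; this is essentially the content of the cited results in~\cite{ssmh} and~\cite[Corollary~4.7]{royspec}.

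For the reverse direction, given an $n$-template $\textbf{P}$, I would invoke Roy's construction in~\cite{royspec} (or equivalently the variational principle of~\cite{dfsu1,dfsu2}) to produce $\ux$. The idea is to select an infinite sequence of primitive integer vectors $(v_k)_{k\geq 1}\subset\mathbb{Z}^{n+1}$ indexed by the break points of $\textbf{P}$, chosen so that the successive lattice flags they span reproduce the covolume schedule prescribed by the partial sums $F_j(q)$ along $q\in[0,\infty)$. The vector $\ux$ is then extracted from the limiting projective direction of the $v_k$, with the required $\mathbb{Q}$-linear independence of $\{1,\xi_1,\ldots,\xi_n\}$ guaranteed either automatically by the slope restrictions on $\textbf{P}$ or by a small perturbation at the construction stage.

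The main obstacle is the reverse direction, specifically the inductive bookkeeping needed to guarantee that each newly introduced $v_k$ is compatible with the lattice flags already built at earlier scales, so that \eqref{eq:jaroy} holds uniformly in $q$ with a constant depending only on $n$. Schmidt--Summerer originally handled this for the restricted class of $(n+1)$-systems with slopes confined to $\{-n,1\}$, while Roy's generalized $(n+1)$-systems together with the Das--Fishman--Simmons--Urba\'nski variational principle extend the construction to the full class of $n$-templates described in Definition~\ref{defnt}.
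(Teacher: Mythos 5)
The paper offers no proof of this statement at all---it is imported verbatim from the literature, being a special case of \cite[Theorem~5.2]{dfsu2} and already contained in Schmidt--Summerer~\cite{ssmh} and Roy~\cite[Corollary~4.7]{royspec}---and your proposal takes exactly this route, citing the same results and adding a plausible sketch of their underlying arguments (Minkowski's second theorem for the bounded sum, covolumes of rank-$j$ sublattices for the convexity of the $F_j$, and Roy's flag construction for the converse). One minor correction to the sketch: with the paper's normalization the box $K(q)$ dilates like $e^{nq}$ in the $z_{0}$-coordinate and contracts like $e^{-q}$ in the remaining $n$ coordinates, which is precisely what confines the slopes of the $L_{j}$ to $[-n,1]$; your stated rates are transposed, though this does not affect the overall argument.
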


 In particular, if $\textbf{P}$ is as in the theorem for $\ux$, then
by \eqref{eq:idf} we see
\begin{equation}  \label{eq:landp}
\limsup_{q\to\infty} \frac{P_{j}(q)}{q}= \overline{\varphi}_{j}, \qquad
\liminf_{q\to\infty} \frac{P_{j}(q)}{q}= \underline{\varphi}_{j}.
\end{equation}

Theorem~\ref{deep} below refines the latter claim
of Theorem~\ref{dop} by adding metrical information.
 For $\mathbf{P}$ an $n$-template,
define its
local contraction rate at $q\in [q_{0},\infty]$ by
\[
\delta(\textbf{P}, q) = \kappa-1, \qquad\qquad
\kappa:= \{\max j: P_{j}^{\prime}(q) < 1\}.
\]
This agrees with the definition of $\delta(\textbf{f},I)$ in~\cite[(5.10)]{dfsu2} for $n\times 1$ templates, i.e. our $n$-templates.
See also~\cite{marnat}.
Derive the average contraction rate in the interval $[q_{0},q]$ by
\[
\Delta(\textbf{P},q)= \frac{1}{q-q_{0}} \cdot \int_{q_{0}}^{q}  \delta(\textbf{P}, u)\; du
\]
and the upper and lower contraction rates by
\[
\underline{\delta}(\textbf{P}) = \liminf_{q\to\infty} \Delta(\textbf{P}, q),
\qquad \overline{\delta}(\textbf{P}) = \limsup_{q\to\infty} \Delta(\textbf{P}, q).
\]
Denote by $\dim_{H}$ resp. $\dim_{P}$
the Hausdorff resp. packing dimensions, and call a family $\mathscr{F}$ 
of templates closed under finite perturbation if when
$\textbf{P}, \textbf{Q}$ are $n$-templates and
$\textbf{P}\in \mathscr{F}$
and $\Vert \textbf{P}-\textbf{Q}\Vert< \infty$, then also $\textbf{Q}\in \mathscr{F}$. Then
a consequence of the variational principle~\cite[Theorem~5.3]{dfsu2}
can be stated as follows.

\begin{theorem}  \label{deep}
	Let $\mathscr{F}$ be a family of $n$-templates 
	closed under finite perturbation.
	Denote by $\mathscr{M}=\mathscr{M}(\mathscr{F})$ the set
	of all $\ux\in\mathbb{R}^n$ whose associated
	combined graphs $\mathbf{L}_{\ux}(q)$ satisfy $\|\textbf{L}_{\ux}(q)-\textbf{P}(q)\|<\infty$
	for some (thus all) $\textbf{P}\in \mathscr{F}$. Then
	\[
	\dim_{H}(\mathscr{M}) = \sup_{ \mathbf{P}\in\mathscr{F} } \underline{\delta}(\textbf{P}), \qquad
	\dim_{P}(\mathscr{M}) = \sup_{ \mathbf{P}\in\mathscr{F} }\overline{\delta}(\textbf{P}).
	\]
\end{theorem}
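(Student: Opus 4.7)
The proof plan is to deduce Theorem~\ref{deep} directly from the variational principle of~\cite[Theorem~5.3]{dfsu2} after translating the setup of this paper into the formalism of~\cite{dfsu1},~\cite{dfsu2}. Since the theorem is explicitly advertised as a consequence of that variational principle, the entire task reduces to a careful alignment of definitions rather than a new argument.

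First I would make the dictionary explicit. By the remark preceding Theorem~\ref{dop}, $\textbf{P}$ is an $n$-template in the sense of Definition~\ref{defnt} if and only if $\textbf{P}/n$ is a balanced $n\times 1$-template in the sense of~\cite{dfsu1},~\cite{dfsu2}, and the combined graph $\textbf{L}_{\ux}$ rescales by $1/n$ in the same way. Consequently the bounded-distance condition $\|\textbf{L}_{\ux}-\textbf{P}\|<\infty$, the family $\mathscr{F}$, the set $\mathscr{M}(\mathscr{F})$, and the property of closure under finite perturbation all transfer verbatim to the normalized setting. Next, the local contraction rate $\delta(\textbf{P},q)=\kappa-1$ is already declared to coincide with the quantity $\delta(\textbf{f},I)$ of~\cite[(5.10)]{dfsu2}, and the average $\Delta(\textbf{P},q)$ together with its lower and upper limits $\underline{\delta}(\textbf{P}), \overline{\delta}(\textbf{P})$ are precisely the Ces\`aro averages appearing in the statement of~\cite[Theorem~5.3]{dfsu2}. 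The closure of $\mathscr{F}$ under finite perturbation is exactly the hypothesis that makes $\mathscr{M}(\mathscr{F})$ well-defined independently of the choice of reference template, since $\|\textbf{L}_{\ux}-\textbf{P}\|<\infty$ for one $\textbf{P}\in\mathscr{F}$ then forces the same for all by the triangle inequality.

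Granted these identifications, the two dimension formulas of Theorem~\ref{deep} follow immediately from~\cite[Theorem~5.3]{dfsu2} applied to the image of $\mathscr{F}$ under the rescaling $\textbf{P}\mapsto\textbf{P}/n$. The main obstacle is purely organizational: one must carefully verify that every normalization---the scaling factor $1/n$, the slope ranges $[-n,1]$ versus the range in~\cite{dfsu2}, the correspondence between the vector $\ux\in\mathbb{R}^n$ in the simultaneous approximation problem and its interpretation as a $1\times n$ matrix in the framework of~\cite{dfsu2}, and the pointwise definition of the contraction rate---matches up. Once this bookkeeping is complete, no further argument is needed beyond invoking the cited theorem.
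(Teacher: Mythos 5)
Your proposal is correct and follows the same route as the paper: the paper states Theorem~\ref{deep} precisely as a consequence of~\cite[Theorem~5.3]{dfsu2}, relying on the same dictionary (the rescaling $\textbf{P}\mapsto\textbf{P}/n$ to balanced $n\times 1$-templates and the identification of $\delta(\textbf{P},q)$ with the contraction rate of~\cite[(5.10)]{dfsu2}) that you spell out. Your write-up merely makes the bookkeeping more explicit than the paper does, which is harmless.
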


In particular, for any given $n$-template
$\textbf{P}$, taking $\mathscr{F}$
the ''finite perturbation hull'' of $\{\textbf{P}\}$ 
and deriving $\mathscr{M}(\mathscr{F})= \mathscr{M}(\{\textbf{P}\})$ as above, 
we see
\[
\dim_{H}(\mathscr{M}) \geq \underline{\delta}(\textbf{P}), \qquad
\dim_{P}(\mathscr{M}) \geq  \overline{\delta}(\textbf{P}).
\]
As remarked in~\cite{dfsu1}, there is no equality in general
as $\delta$ is sensitive to perturbations. 

\section{Reformulating claims in terms of $\ov_{j},\uv_{j}$} \label{se4}

\subsection{Equivalent formulations of Theorems~\ref{t0},~\ref{t2}}  \label{se41}

For $n\geq 2$ an integer and $x\in [-n,1]$ write 
\begin{equation}  \label{eq:hdef}
g_{n}(x)= \frac{(3-2n)x+1-2n+\sqrt{(1-x)[(4n-5)x+4n^2-4n+1]} }{2(n-1)^2}.
\end{equation}
An equivalent formulation of Theorem~\ref{t0} in terms of the
quantities $\ov_{j}, \uv_{j}$ that we will prefer to
prove reads as follows.

\begin{theorem}  \label{t1}
	Let $n\geq 2$ be an integer. 
	Then precisely for triples $(t, \mu, \sigma)$ satisfying
	\begin{equation}  \label{eq:obey}
	t\in [0,1], \qquad \mu\in [g_{n}(t), -\frac{t}{n}], \qquad 
	\sigma= (1-n)\frac{t+n\mu }{n+1+2t+(n-1)\mu}
	\end{equation}
	there is $\ux\in\mathbb{R}^n$ inducing equality in \eqref{eq:3} and
	with associated values
	\[
	\ov_{n+1}= t, \qquad \uv_{n+1}=\sigma, \qquad \uv_{1}=\mu.
	\]
	For given $(t, \mu, \sigma)$ obeying the restrictions \eqref{eq:obey},
	the set of $\ux$ with these properties
	\begin{equation}  \label{eq:dim}
	\Theta=\Theta_{t,\mu}^{(n)}=\{ \ux\in\mathbb{R}^n:\; \ov_{n+1}= t, \quad \uv_{n+1}=\sigma, \quad \uv_{1}=\mu \}
	\end{equation}
	in fact has Hausdorff dimension at least $n-2$ and packing dimension at least $n-2+1/n$.
\end{theorem}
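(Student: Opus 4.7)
The plan is to apply the variational principle of Das, Fishman, Simmons, Urba\'nski (Theorem~\ref{deep}) to an explicitly constructed family of $n$-templates parameterized by the triples $(t,\mu,\sigma)$ satisfying \eqref{eq:obey}. The identity for $\sigma$ in \eqref{eq:obey} is nothing more than the result of solving the equality case of \eqref{eq:3} for $\uv_{n+1}$ in terms of $\ov_{n+1}=t$ and $\uv_{1}=\mu$; so it suffices to exhibit a template producing the prescribed $t,\mu$ and then $\sigma$ is automatic. The heart of the argument is the existence of such a template exactly in the range $\mu\in[g_{n}(t),-t/n]$.

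For the existence direction, I would construct $\textbf{P}=(P_{1},\ldots,P_{n+1})$ as a periodic two-phase cycle on the logarithmic scale. In the \emph{expansion} phase, $P_{n+1}$ rises with slope $+1$ until $P_{n+1}(q)/q\approx t$ and $P_{1}$ descends; in the \emph{compression} phase, $P_{n+1}$ descends until $P_{n+1}(q)/q\approx \sigma$ while $P_{1}$ ascends to a level approximately $\mu q$. The ratio of the two phase lengths is forced by the prescribed limits $\ov_{n+1}=t$ and $\uv_{n+1}=\sigma$, and the height attained by $P_{1}$ at the end of the compression phase is what realizes $\uv_{1}=\mu$. The middle functions $P_{2},\ldots,P_{n}$ are arranged so that $P_{n},P_{n+1}$ carry slope $+1$ throughout, forcing $\kappa(q):=\max\{j:P_{j}^{\prime}(q)<1\}\geq n-1$ for all large $q$, and so that a short sub-interval of relative frequency $1/n$ in each period also has $P_{n}^{\prime}<1$, raising $\kappa$ there to $n$. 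This yields $\underline{\delta}(\textbf{P})\geq n-2$ and $\overline{\delta}(\textbf{P})\geq n-2+1/n$, from which Theorem~\ref{deep} delivers the claimed dimension bounds.

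Verification of the four clauses of Definition~\ref{defnt} reduces to checking convexity of the partial sums $F_{j}$ with slopes in $Z(j)$ at the transitions between phases. Matching heights at these transitions produces a linear system in the phase lengths and the target heights; solvability of this system together with \eqref{eq:landp} and the equality constraint in \eqref{eq:3} amounts to a quadratic inequality in $\mu$ whose discriminant is precisely $(1-t)[(4n-5)t+4n^{2}-4n+1]$. The lower root equals $g_{n}(t)$ as in \eqref{eq:hdef}, explaining the surprising algebraic form of the bound and forcing $\mu\geq g_{n}(t)$; the upper bound $\mu\leq -t/n$ is the right Khintchine inequality \eqref{eq:khin2}.

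For the converse, I would take any $\ux\in\mathbb{R}^{n}$ inducing equality in \eqref{eq:3}, pass to an associated $n$-template $\textbf{P}$ via Theorem~\ref{dop}, and exploit that equality in \eqref{eq:1} forces equality in both splitting inequalities of \eqref{eq:ggmm}. Combined with \eqref{eq:landp}, these identities constrain the shape of $\textbf{P}$ rigidly enough to reproduce the same quadratic equation in $\mu$, so the range $\mu\in[g_{n}(t),-t/n]$ is necessary. The principal obstacle will be the template construction itself: coordinating the oscillation frequencies and phase heights so as to realize each admissible triple \emph{while simultaneously} keeping $\kappa\geq n-1$ everywhere and reaching $\kappa=n$ on the frequency needed for the packing bound. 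Pinning down the closed form $g_{n}(t)$ from the resulting algebraic conditions, and checking that the lower boundary case $\mu=g_{n}(t)$ is itself attained by a bona fide template (so that $\Theta_{t,\mu}^{(n)}$ is nonempty there), are the most delicate parts of the plan.
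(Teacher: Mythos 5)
Your existence half is essentially the paper's own route (a periodic $n$-template realizing the prescribed limits, fed into Theorem~\ref{deep}), although the details as stated are partly inconsistent: $P_{n}$ and $P_{n+1}$ cannot ``carry slope $+1$ throughout'' while $P_{n+1}$ also descends from ratio $t$ to ratio $\sigma$ in each period, and the claim that a sub-interval of relative frequency exactly $1/n$ yields the packing bound $n-2+1/n$ is asserted rather than derived (in the paper this bound comes out of computing the limsup of average contraction rates over intervals ending at the interior switch points $\tilde{q}_{1,N}$, $\tilde{q}_{3,N}$, giving the quantities $B,C$ of \eqref{eq:bb} and then checking $\max\{B,C\}\geq n-2+1/n$). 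These are repairable. The upper restriction $\mu\leq -t/n$ via \eqref{eq:khin2} is fine.

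The genuine gap is the converse (non-existence) direction, i.e.\ the necessity of $\mu\geq g_{n}(t)$. You propose to derive it from the fact that equality in \eqref{eq:1} forces equality in both splitting inequalities of \eqref{eq:ggmm}, claiming these identities together with \eqref{eq:landp} ``constrain the shape of $\textbf{P}$ rigidly enough to reproduce the same quadratic equation in $\mu$.'' This cannot work: the splitting identities hold automatically whenever equality in \eqref{eq:1} holds, for \emph{every} pair $(t,\mu)$ with $\sigma$ given by \eqref{eq:obey}; they only pin down the remaining exponent $\ov_{1}$ (equivalently $\wo$ via \eqref{eq:istg}) and impose no restriction on $(t,\mu)$ beyond equality in \eqref{eq:3} itself, so they cannot single out the threshold $g_{n}(t)$. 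Similarly, the quadratic solvability condition you extract from your own construction only shows that \emph{your} template degenerates for $\mu<g_{n}(t)$, not that no $\ux$ (equivalently no $n$-template) with $\ov_{n+1}=t$, $\uv_{1}=\mu$ and equality in \eqref{eq:3} exists. What is actually needed --- and what the paper supplies in Section~\ref{non1} --- is a rigidity argument on the combined graph: first, equality in \eqref{eq:3} forces, along a sequence $q_{0}\to\infty$ of near-maxima of $L_{n+1}(q)/q$, the precise local pattern of Figure~2 on $[q_{0},\tilde{q}_{1}]$ up to $o(q)$ (asymptotic equality in each step of the chain of estimates \eqref{eq:kukuk}--\eqref{eq:specht}); second, one shows that when $\mu<\mu_{0}$ this pattern cannot be continued to the right of $\tilde{q}_{1}$, by examining the next meeting point of $L_{n}$ and $L_{n+1}$ and distinguishing whether $L_{n-1}$ meets $L_{n}$ in between, each case contradicting either $\uv_{n+1}=\sigma$ or $\ov_{n+1}=t$. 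Nothing in your proposal supplies this step, so as written the optimality of the lower endpoint $g_{n}(t)$ is unproved.
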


Write $\mu_{0}:= g_{n}(t)$ in the sequel.
The deduction of Theorem~\ref{t0} from Theorem~\ref{t1} relies
purely on \eqref{eq:a}, \eqref{eq:b}.
By equivalence of \eqref{eq:khin} and \eqref{eq:khin2} the upper bound $-t/n$ is a consequence of the bound $\rho_{1}$ and again the best we can hope for.
For $\rho_{2}$, we
further use the defining equation 
\begin{equation}  \label{eq:muiod}
(1+t+(n-1)\mu_{0})^2 - (1-t)(1-\mu_{0}) = 0
\end{equation}
of $\mu_{0}$. Moreover, by \eqref{eq:a}, \eqref{eq:b} we may write 
\[
t= 1-\frac{n+1}{1+\os}=\frac{\os-n}{1+\os}, \qquad \mu=\frac{n+1}{\om+1}-n=\frac{1-n\om}{\om+1}.
\]
Inserting for $t$ and $\mu=\mu_{0}$ in \eqref{eq:muiod}
leads to a quadratic equation for $\om$ in $\os$
with (the correct) solution $\rho_{2}$,
we omit the elementary calculation. 
Clearly, the argument can be read in reverse direction 
and the claims are indeed equivalent.
We remark that if $\mu=\mu_{0}$, as $\ov_{n+1}\to 1$ we compute 
$\uv_{n+1}\to 1$ and $\uv_{1}\to -2/(n-1)$, and one can 
show $\ov_{1}\to -1/n$.
We refine the metrical claim of Theorem~\ref{t1} in Section~\ref{ela}. 

Keep $g_{n}$ defined in \eqref{eq:hdef}.
An equivalent formulation of Theorem~\ref{t2} is the following.

\begin{theorem}  \label{t3}
	Let $n\geq 2$ be an integer. 
	Then precisely for triples $(s, \nu, \gamma)$ satisfying
	\begin{equation}  \label{eq:beyo}
	s\in [-n,0], \qquad \nu\in [-\frac{s}{n}, g_{n}(s)], \qquad 
	\gamma= (1-n)\frac{s+n\nu }{n+1+2s+(n-1)\nu}
	\end{equation}
	there is $\ux\in\mathbb{R}^n$ inducing equality in \eqref{eq:4} and
	with associated values
	\[
	\uv_{1}= s, \qquad \ov_{1}=\gamma, \qquad \ov_{n+1}=\nu.
	\]
	For given $(s, \nu, \gamma)$ obeying the restrictions \eqref{eq:beyo},
	the corresponding set 
	\begin{equation}  \label{eq:sima}
	\Sigma_{s,\nu}^{(n)}= \{ \ux\in\mathbb{R}^n: \; \uv_{1}=s, \qquad \ov_{1}=\gamma, \qquad
	\ov_{n+1}=\nu \}
	\end{equation}
	has packing dimension at least $1/2$, and positive 
	Hausdorff dimension as soon as $s>-n$.
\end{theorem}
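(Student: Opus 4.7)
The plan is to translate Theorem~\ref{t3} into the template language of Section~\ref{roy} and deduce it from the variational principle (Theorem~\ref{deep}) combined with the realization statement Theorem~\ref{dop}. Via the identifications \eqref{eq:idf}, the assertion becomes: for each admissible triple $(s,\nu,\gamma)$ one must exhibit an $n$-template $\mathbf{P}$ with $\liminf_{q\to\infty} P_1(q)/q=s$, $\limsup_{q\to\infty} P_1(q)/q=\gamma$, $\limsup_{q\to\infty} P_{n+1}(q)/q=\nu$ satisfying \eqref{eq:4} with equality in the limit, and conversely verify that no $\ux$ whose template lies outside the prescribed region produces equality in \eqref{eq:4}. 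The dimension bounds will then follow by computing the contraction rates $\underline{\delta}(\mathbf{P})$, $\overline{\delta}(\mathbf{P})$ for a suitably flexible family of such templates and invoking Theorem~\ref{deep}.

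For the necessity direction I would first observe that, with $\uv_1=s$ and $\ov_{n+1}=\nu$ prescribed, equality in \eqref{eq:4} is linear in $\ov_1$ and solves uniquely to $\gamma=(1-n)(s+n\nu)/(n+1+2s+(n-1)\nu)$, forcing the stated formula. The lower bound $\nu\geq -s/n$ is the right estimate of \eqref{eq:khin2}. The delicate upper bound $\nu\leq g_n(s)$ comes from the fact, noted in Section~\ref{se}, that equality in \eqref{eq:4} forces identity in both splitting inequalities of \eqref{eq:ggmm}; combining the resulting algebraic identities with the additional constraint $\gamma\geq -\nu/n$ and eliminating $\gamma$ produces precisely the quadratic relation dual to \eqref{eq:muiod}, whose relevant root is $g_n(s)$.

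For sufficiency I would construct, for each admissible pair $(s,\nu)$, a quasi-periodic $n$-template $\mathbf{P}$ whose period consists of a descending phase in which $P_1$ drops at slope $-n$ while $P_2,\ldots,P_{n+1}$ rise at slope $1$, followed by a collision phase where $P_1$ hands its role to $P_2$ and subsequently rises at slope $1$ (the remaining components being readjusted via Definition~\ref{defnt}$(iv)$ and \eqref{eq:sumv}), with the pattern iterated along successive components. The ratios of phase lengths can be tuned to produce the prescribed asymptotic slopes $s$, $\gamma$ and $\nu$; on the boundary $\nu=g_n(s)$ the ratios are precisely those making the splitting inequalities of \eqref{eq:ggmm} identities on the limits, which by the remark in Section~\ref{se} delivers equality in \eqref{eq:4}. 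Taking $\mathscr{F}$ to be the finite-perturbation hull of $\mathbf{P}$ and applying Theorem~\ref{dop} produces $\ux\in\Sigma_{s,\nu}^{(n)}$.

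The metrical claims are obtained by enriching the basic template with additional local flexibility that preserves the three prescribed asymptotic slopes and the asymptotic equality in \eqref{eq:4}. For the packing dimension bound I would insert, on a sparse sequence of time intervals, longer descending segments in which a second component also drops below slope $1$, so that $\delta(\mathbf{P},q)\geq 1$ on a set of positive upper density and hence $\overline{\delta}(\mathbf{P})\geq 1/2$. For positive Hausdorff dimension when $s>-n$, one exploits the slack in the constraint on the average slope of $P_1$: because $s>-n$, the descending segments of $P_1$ need not occupy all of the time, leaving room for a second component to remain below slope $1$ on a set of positive \emph{lower} density, yielding $\underline{\delta}(\mathbf{P})>0$; this slack collapses as $s\to -n$, consistent with the failure of the argument in that limiting case. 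The main obstacle will be showing that these local modifications can be amortised so as not to perturb any of the three prescribed asymptotic slopes nor the asymptotic equality in \eqref{eq:4}; this requires careful bookkeeping of phase lengths and of the compensating adjustments forced on the middle components by \eqref{eq:sumv} and Definition~\ref{defnt}$(iv)$.
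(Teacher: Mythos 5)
Your necessity argument for the crucial upper bound $\nu\leq g_{n}(s)$ does not work, and this bound is the heart of the theorem (the word ``precisely''). You propose to get it by combining the two splitting identities of \eqref{eq:ggmm} (forced by equality in \eqref{eq:4}) with a constraint $\gamma\geq -\nu/n$ and eliminating $\gamma$. Two problems: first, $\gamma\geq-\nu/n$, i.e. $n\ov_{1}+\ov_{n+1}\geq 0$, is not one of the available constraints --- the Khintchine-type relations \eqref{eq:khin2} couple $\uv_{1}$ with $\ov_{n+1}$ and (via the ordering and \eqref{eq:bsum}) $\ov_{1}$ with $\uv_{n+1}$, not $\ov_{1}$ with $\ov_{n+1}$, and no such inequality is cited or proved in your sketch. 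Second, even granting it, it is not tight at the boundary: for $n=2$, $s=-1$ one has $\nu_{0}=g_{2}(-1)=\sqrt{3}-1\approx 0.732$ and the stated $\gamma\approx -0.268$, while $-\nu_{0}/n\approx -0.366$, so the constraint is slack at $\nu=\nu_{0}$ and eliminating $\gamma$ through it cannot reproduce the quadratic $(\vartheta-1)^{2}=(\nu-1)(s-1)$ whose root is $g_{n}(s)$. Indeed, if the bound followed by such formal algebra from known transference inequalities, it would not be the new phenomenon the paper is about. The paper's proof of emptiness for $\nu>\nu_{0}$ is instead a dynamical argument on the combined graph $\mathbf{L}_{\ux}$ (dual to the proof of Theorem~\ref{t10}): equality in \eqref{eq:4} forces, along a sequence of times, the local configuration of the constructed period near $[\tilde{q}_{1},q_{1}]$ of Figure~4 up to $o(q)$, and one then shows, by analyzing the next meeting point of $L_{1},L_{2}$ and splitting into two cases, that for $\nu>\nu_{0}$ this configuration cannot be continued without violating the definitions of $\uv_{1},\ov_{1},\ov_{n+1}$. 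Nothing in your proposal replaces this step.

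The sufficiency and metrical parts are closer in spirit to the paper but remain materially incomplete. The paper builds an explicit periodic $n$-template (preperiod plus period with switch points $q_{0},\tilde{q}_{3},\tilde{q}_{2},\tilde{q}_{1},q_{1}$, including a segment of slope $-1/n$ for $P_{2},\ldots,P_{n+1}$ that handles the interior values $\nu\in(-s/n,\nu_{0})$), verifies feasibility via Proposition~\ref{qp}-type inequalities, and then computes the contraction rates exactly: $\delta=n,1,0$ on the three pieces of the period, giving $\dim_{H}\geq D>0$ for $s>-n$ and $\dim_{P}\geq\max\{E,F\}\geq 1/2$ by averaging over initial segments ending at $\tilde{q}_{2,N}$ resp. $\tilde{q}_{1,N}$. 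Your scheme of sparse modifications is both unnecessary and, as stated, insufficient: $\delta\geq 1$ on a set of positive upper density only gives $\overline{\delta}>0$, not $\overline{\delta}\geq 1/2$, and you give no mechanism guaranteeing that the insertions preserve all three prescribed limits $\uv_{1}=s$, $\ov_{1}=\gamma$, $\ov_{n+1}=\nu$ (you flag this as the ``main obstacle'' without resolving it). So as it stands the proposal establishes neither the exact admissible range of $\nu$ nor the quantitative dimension bounds.
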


The equivalence is derived similarly as for Theorem~\ref{t1}, we omit details.
If $\nu=\nu_{0}:= g_{n}(s)$, as $\uv_{1}\to -n$ we calculate $\ov_{1}\to -(n-1)/2$ and $\ov_{n+1}\to 1$, in fact $\uv_{n+1}\to 1$ holds. 
See again Section~\ref{ela} for 
refined metrical claims.

\subsection{Refinements of the metrical claims}  \label{ela}

For simplicity we state our metrical claims in the language of
Section~\ref{se41} only. Corresponding results in terms of classical exponents
can be inferred from \eqref{eq:a}, \eqref{eq:b}, which in particular 
imply the equivalences
\[
t\to 0^{+} \Longleftrightarrow \os\to n^{+} , \;
t\to 1^{-} \Longleftrightarrow \os\to\infty , \;
\mu=-\frac{t}{n} \Longleftrightarrow \om=\rho_{1} , \;
\mu=\mu_{0} \Longleftrightarrow \om=\rho_{2},
\]
and
\[
s\to 0^{-} \Leftrightarrow \om\to \frac{1}{n}^{+} , \;
s\to -n^{+} \Leftrightarrow \om\to\infty , \;
\nu=-\frac{s}{n} \Leftrightarrow \os=\tau_{1} , \;
\nu=\nu_{0} \Leftrightarrow \os=\tau_{2}.
\]
We start with refining Theorem~\ref{t1}. Let
\[
A=A_{t,\mu}^{(n)}= \frac{1-t}{2t+(n-1)\mu}\cdot
\frac{1}{n+1} \cdot \left(3t+2(n-1)\mu-n+\frac{n(1+t+(n-1)\mu)^2}{1-t}\right)\geq 0,
\]
with $A>0$ as soon as $t<1$, and derive $B=B_{t,\mu}^{(n)}$ and $C=C_{t,\mu}^{(n)}$ as
\begin{equation} \label{eq:bb}
B=n- \frac{(2-A)(n+1)}{n+1+2t+(n-1)\mu}, \qquad C= n-2+A\frac{n+1}{n+1+(n-1)t+n(n-1)\mu}.
\end{equation}
Then $A\in [0,2]$ and $n\geq B\geq n-2+A, n\geq C\geq n-2+A$
for every $t,\mu$ as in Theorem~\ref{t1}. Recall the Hausdorff dimension of a set never exceeds its packing dimension~\cite{falconer}.

\begin{theorem}  \label{thm100}
	The
	dimensions of the sets $\Theta=\Theta_{t,\mu}^{(n)}$ in \eqref{eq:dim}
	are bounded from below by
\begin{equation}  \label{eq:hd1}
\dim_{H}(\Theta)\geq n-2+ A, \qquad \dim_{P}(\Theta)\geq 
\max\{ B,C\}.
\end{equation}
There is equality (at least) if $t>0$ and $\mu=\mu_{0}$. Moreover, $\max\{ B,C\}\geq n-2+\frac{1}{n}$.
\end{theorem}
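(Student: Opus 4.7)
The plan is to apply the variational principle (Theorem~\ref{deep}) to a carefully chosen family $\mathscr{F}$ of $n$-templates, all lying on the equality hypersurface of \eqref{eq:3} with the prescribed extremal values $\ov_{n+1}=t$, $\uv_{n+1}=\sigma$, $\uv_{1}=\mu$. The central object is the template $\mathbf{P}=\mathbf{P}_{t,\mu}$ whose construction underlies Theorem~\ref{t1}: its combined graph is eventually periodic on $\log q$-scale, so that one ``super-period'' decomposes into finitely many linear phases on each of which the slope vector, and hence the top index $\kappa(q)=\max\{j:P_{j}'(q)<1\}$, is constant. Consequently $\delta(\mathbf{P},q)=\kappa-1$ is piecewise constant, and the average contraction $\Delta(\mathbf{P},q)$ over a full super-period equals the length-weighted mean of $\kappa-1$.

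First I would pin down the phase lengths of $\mathbf{P}_{t,\mu}$. The closure condition, that each $P_{j}$ return to itself (up to a common translation) after one super-period, yields a linear system in the phase lengths which, together with \eqref{eq:sumv} and the prescribed asymptotic slopes $t,\mu,\sigma$, has a unique solution. Substituting this solution into the length-weighted mean of $\kappa-1$ is intended to give precisely $n-2+A$, so that $\underline{\delta}(\mathbf{P})=\overline{\delta}(\mathbf{P})=n-2+A$; Theorem~\ref{deep} then yields $\dim_{H}(\Theta)\geq n-2+A$.

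Next, for the packing bound I would enlarge $\mathscr{F}$ by inserting sporadic ``detour'' blocks into $\mathbf{P}_{t,\mu}$. Each detour must be compatible with Definition~\ref{defnt}, must leave all $\liminf_{q\to\infty}L_{j}(q)/q$ unchanged so that the extremal exponents are preserved, and must transiently raise $\kappa$. Two natural detours arise: one transiently compresses a lower band and produces an average contraction rate $B$ during the detour, the other drives $P_{n+1}$ downward using the slope $-n/(n+1-k)$ permitted by clause $(iv)$ and produces rate $C$. Placing detours of either type at sufficiently rarefied $q$ leaves the $\uv_{j}$ untouched while pushing $\overline{\delta}(\mathbf{P})$ up to $B$ or $C$, and a further application of Theorem~\ref{deep} then gives $\dim_{P}(\Theta)\geq\max\{B,C\}$. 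The easy algebraic inequality $\max\{B,C\}\geq n-2+1/n$ will be checked by manipulating \eqref{eq:bb}: one writes $B-(n-2+\tfrac1n)$ and $C-(n-2+\tfrac1n)$ as fractions whose numerators are, respectively, positive for $A$ below and above an explicit threshold $A_{\ast}(t,\mu)$, so at least one bound holds throughout the admissible region.

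For the equality assertion at $t>0$ and $\mu=\mu_{0}$ I would prove rigidity: the defining relation \eqref{eq:muiod} makes the quadratic governing the phase lengths degenerate, so that every $n$-template with the given asymptotic data and equality in \eqref{eq:3} must, outside a set of $q$ of zero density, spend the same fraction of time in each $\kappa$-regime as $\mathbf{P}_{t,\mu_{0}}$. This removes exactly the slack which, for $\mu>\mu_{0}$, would allow positive-density excursions to higher $\kappa$ without violating \eqref{eq:3}. Matching upper bounds for both $\overline{\delta}$ and $\underline{\delta}$ across the whole finite-perturbation hull would then follow, and the equality clause of Theorem~\ref{deep} closes the argument. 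This rigidity step is the one I expect to be the principal obstacle, as it requires a complete classification of the slope patterns compatible simultaneously with the extremal value $\mu=\mu_{0}$ and equality in \eqref{eq:3}; I anticipate that the convexity condition $(iv)$ of Definition~\ref{defnt}, applied to the partial sums $F_{j}$, will be what ultimately forces the unique structure.
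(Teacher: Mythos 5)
Your Hausdorff bound and your equality discussion follow the paper's route (periodic template $\mathbf{P}_{t,\mu}$, per-period average of $\delta$ equal to $n-2+A$, rigidity of the combined graph for $\mu=\mu_{0}$ fed back into Theorem~\ref{deep}), but the packing-dimension part contains a genuine error. You assert that for the periodic template $\underline{\delta}(\mathbf{P})=\overline{\delta}(\mathbf{P})=n-2+A$. This is false: since the local rate $\delta(\mathbf{P},q)$ is non-constant inside each period (it takes the values $n$, $n-1$, $n-2$ on $[q_{N},\tilde{q}_{1,N}]$, $[\tilde{q}_{1,N},\tilde{q}_{3,N}]$, $[\tilde{q}_{3,N},q_{N+1}]$), the running average $\Delta(\mathbf{P},q)$ oscillates, and its $\limsup$ is attained by stopping at the interior switch points $\tilde{q}_{1,N}$ or $\tilde{q}_{3,N}$ rather than at the period ends; one gets $\overline{\delta}(\mathbf{P})=\max\{S,T\}$ with $S\geq B$, $T\geq C$, and indeed $B-(n-2+A)=(2-A)\frac{2t+(n-1)\mu}{n+1+2t+(n-1)\mu}>0$ away from degenerate parameters. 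This is exactly how the paper obtains $\dim_{P}(\Theta)\geq\max\{B,C\}$: no modification of the template is needed, only the correct evaluation of $\overline{\delta}$ for the one already built, and the formulas \eqref{eq:bb} are precisely the limit ratios $q_{N}/\tilde{q}_{1,N}$ and $q_{N+1}/\tilde{q}_{3,N}$ entering that computation.

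Your substitute mechanism, inserting ``detour'' blocks that ``transiently raise $\kappa$'', does not work as described. The local rate of the base template already attains the maximal value $n$ on part of each period, so $\kappa$ cannot be raised further; what would be needed is to lengthen the high-rate phases by a positive proportion of $q$, and doing so conflicts with membership in $\Theta$: prolonging the decay of $P_{n+1}$ past $\tilde{q}_{1}$ either forces $P_{n+1}<P_{n}$ (not a template) or pushes $P_{n+1}(q)/q$ below $\sigma$, destroying the exact value $\uv_{n+1}=\sigma$ (similarly one risks $\uv_{1}<\mu$ or $\ov_{n+1}>t$). Sporadic detours of bounded length, on the other hand, change $\overline{\delta}$ not at all. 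Moreover you give no computation showing that any admissible detour produces the specific constants $B$ and $C$ of \eqref{eq:bb}. So, as written, the packing bound is unproved; replacing the detour argument by the $\limsup$-at-interior-switch-points computation for the very template you already constructed closes the gap and is the paper's argument.
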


We conjecture equalities in \eqref{eq:hd1} as soon as $t>0$ and
$\mu\in [\mu_{0},-t/n)$, but
only rigorously prove it for $\mu=\mu_{0}$ in the last paragraph of Section~\ref{non1}.
The restriction $t>0$ is necessary and probably 
equality does not extend to $\mu=-t/n$ etiher, as we explain below.
We discuss special cases.
If $\mu$ attains its maximum value $\mu=-t/n$, then $A_{t,-t/n}^{(n)}$ decreases from $1+(n+1)^{-1}$ to $1$
as $t$ rises in $(0,1)$, and we iner
\[
\lim_{t\to 0^{+}} \dim_{P} \Theta_{t,-t/n}^{(n)}\geq
\lim_{t\to 0^{+}} \dim_{H} \Theta_{t,-t/n}^{(n)}\geq n-1+\frac{1}{n+1},
\quad \lim_{t\to 1^{-}} \dim_{H} \Theta_{t,-t/n}^{(n)}\geq n-1.
\]
As $t\to 0^{+}$ then $B_{t,-t/n}, C_{t,-t/n}$ tend 
to $n-1+1/(n+1)$ as well, suggesting
equality in all estimates.
We can pass to $t=0, t=1$ by considering limiting graphs, so
in particular
\begin{equation}  \label{eq:nmin1}
\dim_{P}(\Theta_{t,-t/n}^{(n)})\geq \dim_{H}(\Theta_{t,-t/n}^{(n)})\geq n-1,
\qquad t\in[0,1],
\end{equation}
The lower limit for the Hausdorff dimension is sharp, see the last paragraph of this section.
If $\mu=\mu_{0}$, evaluating the limits of $A_{t,\mu_{0} }^{(n)}, B_{t,\mu_{0} }^{(n)}, C_{t,\mu_{0} }^{(n)}$
as $t\to 0^{+}$ and $t\to 1^{-}$ from Theorem~\ref{thm100} we get
\[
\lim_{t\to 0^{+}}\dim_{P}(\Theta_{t, \mu_{0} }^{(n)}) =
\lim_{t\to 0^{+}}  \dim_{H}(\Theta_{t, \mu_{0} }^{(n)})
= n-2+\frac{3}{n+1},
\]
and
\begin{equation}  \label{eq:staefan}
\lim_{t\to 1^{-}}  \dim_{H}(\Theta_{t, \mu_{0} }^{(n)})= n-2, \qquad
\lim_{t\to 1^{-}}\dim_{P}(\Theta_{t, \mu_{0} }^{(n)})= n-2+\frac{1}{n}.
\end{equation}
On the other hand, if $t=0$ equivalent to $\ov_{j}=\uv_{j}=0$ for all $1\leq j\leq n+1$, we have
\begin{equation}  \label{eq:trivia}
\dim_{P}(\Theta_{0,0}^{(n)})=\dim_{H}(\Theta_{0,0}^{(n)})=n,
\end{equation}
since then $\os=n$, $\om=1/n$,
which holds for almost all $\ux\in \mathbb{R}^n$. More generally, we 
expect the maps $(t,\mu)\longmapsto \dim_{H}(\Theta_{t,\mu}^{(n)})$
and $(t,\mu)\longmapsto \dim_{P}(\Theta_{t,\mu}^{(n)})$
to be discontinuous on the curve $\mu=-t/n$ where $\sigma=0$. 
In this case our constructions in the proof 
below can be refined to obtain larger dimensions
than in Theorem~\ref{thm100}.

We turn towards Theorem~\ref{t3}.
Let 
\begin{equation}  \label{eq:D}
D=D_{s,\nu}^{(n)}:= n-\frac{s-s^2}{2s+(n-1)\nu}\geq 0
\end{equation}
and $D>0$ as soon as $s>-n$, and derive $E=E_{s,\nu}^{(n)}, F=F_{s,\nu}^{(n)}$ via
\begin{equation}  \label{eq:ee}
E= n - (n-D) \frac{(n+1)(s+(n-1)\nu+1)}{(1-s)(n+1+s-\nu)},
\qquad F= D\frac{n+1}{n+1+2s+(n-1)\nu}.
\end{equation}
Then $D\in [0,n]$ %
and $n\geq E\geq D, n\geq F\geq D$ for all $s,\nu$ in 
the parameter range. 

\begin{theorem}  \label{thm200}
The dimensions of the sets $\Sigma=\Sigma_{s,\nu}^{(n)}$
in \eqref{eq:sima} are bounded by
\begin{equation}  \label{eq:hd2}
\dim_{H}(\Sigma_{s,\nu}^{(n)}) \geq D, \qquad 
\dim_{P}(\Sigma_{s,\nu}^{(n)}) \geq \max\{ E,F\}.
\end{equation}
There is equality (at least) if $s<0$ and $\nu=\nu_{0}$. Moreover, $\max\{ E,F\}\geq \frac{1}{2}$.
\end{theorem}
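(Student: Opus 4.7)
The proof will mirror the scheme used for Theorem~\ref{thm100}, exploiting the variational principle (Theorem~\ref{deep}) applied to $n$-templates derived from the construction behind Theorem~\ref{t3}. First I would recall the base $n$-template $\mathbf{P}_0$ built in the proof of Theorem~\ref{t3}: it is periodic on a logarithmic scale, induces equality in \eqref{eq:4}, and realizes the prescribed values $\uv_1 = s$, $\ov_1 = \gamma$, $\ov_{n+1} = \nu$. On each period a specific subset of the components $P_j$ is decreasing while the remaining ones rise with slope $1$; the fractions of time spent in each regime are determined by the zero-sum condition \eqref{eq:sumv}, the convexity clause $(iv)$ of Definition~\ref{defnt}, and the relations fixing $s, \gamma, \nu$.

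For the Hausdorff bound I would compute $\underline{\delta}(\mathbf{P}_0)$ directly: averaging $\delta(\mathbf{P}_0, q) = \kappa(q) - 1$ over a period and simplifying using \eqref{eq:sumv} yields exactly $D_{s,\nu}^{(n)} = n - (s - s^2)/(2s + (n-1)\nu)$. Theorem~\ref{deep} applied to the finite-perturbation hull of $\{\mathbf{P}_0\}$ then gives $\dim_H(\Sigma) \geq D$. For the packing bound I would design two modifications $\mathbf{P}_E$ and $\mathbf{P}_F$ of $\mathbf{P}_0$. Both preserve the $\liminf$ behavior so that the prescribed extremal values are unchanged, but interleave sparse, progressively longer excursions on which additional components are forced to decrease; these excursions raise the $\limsup$ of the average contraction rate. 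One scheme is calibrated to give $\overline{\delta}(\mathbf{P}_E) = E_{s,\nu}^{(n)}$ and the other $\overline{\delta}(\mathbf{P}_F) = F_{s,\nu}^{(n)}$, with the closed-form expressions in \eqref{eq:ee} falling out of the length ratios of the excursions. Applying Theorem~\ref{deep} separately to each hull and taking the larger of the two yields $\dim_P(\Sigma) \geq \max\{E, F\}$.

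The equality assertion when $s < 0$ and $\nu = \nu_0$ rests on a rigidity argument. Saturating $\nu = g_n(s)$ forces equality in the defining constraints of \eqref{eq:beyo}, and combined with the requirement that the template realize $\uv_1 = s$ and $\ov_1 = \gamma$ with equality in \eqref{eq:4}, this essentially pins down the slope pattern of any admissible $\mathbf{P}$ up to bounded perturbation. One then verifies that the only ways to inflate the average contraction rate on arbitrarily long initial segments correspond to the two excursion patterns underlying $\mathbf{P}_E$ and $\mathbf{P}_F$; hence every $\mathbf{P}$ in the finite-perturbation hull of $\mathscr{F}$ satisfies $\underline{\delta}(\mathbf{P}) \leq D$ and $\overline{\delta}(\mathbf{P}) \leq \max\{E, F\}$, so the suprema in Theorem~\ref{deep} are attained by the constructions above.

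Finally, $\max\{E, F\} \geq \tfrac{1}{2}$ reduces to a routine inequality check over the admissible range $s \in [-n, 0]$, $\nu \in [-s/n, g_n(s)]$, handled by monotonicity of $E$ and $F$ in $\nu$ together with endpoint evaluations via \eqref{eq:ee}. The main obstacle will be the rigidity behind the equality claim: one must show that on the boundary curve $\nu = \nu_0$ the slack granted by clauses $(iii)$–$(iv)$ of Definition~\ref{defnt} collapses to precisely the two excursion schemes, via a careful case analysis of which components $P_j$ are allowed to decrease at each $q$ once the convexity constraint is forced to its extremal configuration. This parallels the corresponding step for Theorem~\ref{thm100} with the roles of $\uv_1$ and $\ov_{n+1}$ exchanged, and should largely transfer by the duality between \eqref{eq:3} and \eqref{eq:4}.
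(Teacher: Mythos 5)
Your Hausdorff bound follows the paper's route exactly: the single periodic template $\mathbf{P}_{s,\nu}$ has period-average contraction rate $D$, and Theorem~\ref{deep} applied to its finite-perturbation hull gives $\dim_{H}(\Sigma)\geq D$. The gap is in the packing bound. You treat $E$ and $F$ as quantities that require two new templates $\mathbf{P}_{E},\mathbf{P}_{F}$ obtained by grafting sparse, progressively longer excursions onto $\mathbf{P}_{s,\nu}$, asserting without computation that these excursions preserve the prescribed data and are ``calibrated'' so that the closed forms of \eqref{eq:ee} fall out. In the paper no modification is made at all: because the local rate inside one period is front-loaded (it equals $n$ on $[q_{0},\tilde{q}_{2}]$, then $1$ on $[\tilde{q}_{2},\tilde{q}_{1}]$, then $0$ on $[\tilde{q}_{1},q_{1}]$), the running average $\Delta(\mathbf{P},q)$ of the \emph{same} periodic template peaks at the intermediate points $\tilde{q}_{2,N}$ and $\tilde{q}_{1,N}$ of the $N$-th period, and the limits of these running averages are precisely $E$ and $F$, the ratios $q_{0}/\tilde{q}_{2}$ and $q_{1}/\tilde{q}_{1}$ producing the explicit constants in \eqref{eq:ee}. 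Hence $\overline{\delta}(\mathbf{P}_{s,\nu})\geq \max\{E,F\}$ already. Your excursion scheme, by contrast, is unsubstantiated: you never verify that forcing extra components to decay on excursions of length comparable to $q$ keeps $\uv_{1}=s$, $\ov_{1}=\gamma$ and $\ov_{n+1}=\nu$ (two of these are limsups tied to equality in \eqref{eq:4}, the zero-sum condition pushes other components up, and $\uv_{1}$ can easily drop below $s$), nor that the resulting upper contraction rate equals the specific values $E,F$.

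The misconception becomes fatal exactly where the remaining claims of the theorem live. For $s<0$, $\nu=\nu_{0}$ the rigidity you yourself invoke for the equality statement forces any combined graph realizing $(s,\gamma,\nu_{0})$ to coincide, up to $o(q)$, with the periodic pattern of Figure~4; therefore no template carrying excursions of positive proportional length can realize the prescribed values in that regime, so $\mathbf{P}_{E},\mathbf{P}_{F}$ as you describe them cannot exist there, while negligible excursions do not change $\overline{\delta}$. Consequently your route cannot produce the bound $\max\{E,F\}$ at $\nu=\nu_{0}$, which is also where the final estimate is obtained: the paper gets $\max\{E,F\}\geq \tfrac12$ from monotonicity of $D,E,F$ in $s,\nu$ together with the limit $F_{s,\nu_{0}}\to \tfrac12$ as $s\to -n^{+}$, in the spirit of what you sketch. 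The repair is the paper's observation itself: compute $\overline{\delta}$ of the unmodified template by stopping the averaging at $\tilde{q}_{2,N}$ and $\tilde{q}_{1,N}$; the equality case for $\nu=\nu_{0}$ then follows, as you correctly anticipate, from the rigidity of the periodic pattern combined with Theorem~\ref{deep} applied to the family of templates within finite distance of $\mathbf{P}_{s,\nu_{0}}$.
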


We again conjecture equalities if $s<0$ and $\nu\in (-s/n,\nu_{0}]$, 
but can guarantee 
it only for $\nu=\nu_{0}$, see Section~\ref{non2}. Again $s<0$ 
and $\nu \neq -s/n$ are vital.
If $\nu=\nu_{0}$ then as $s\to 0$ all limits $D,E,F$
become $n-2+3/(n+1)$ so we get
\[
\lim_{s\to 0^{-}} \dim_{H}(\Sigma_{s,\nu_{0} }^{(n)}) =
\lim_{s\to 0^{-}} \dim_{P}(\Sigma_{s,\nu_{0} }^{(n)}) =
n-2+\frac{3}{n+1}.
\]
As $s$ increases in $[-n,0]$, the bound $D_{s,\nu_{0}}^{(n)}$ for 
the Hausdorff dimensions decays 
whereas $\max\{ E_{s,\nu_{0}}^{(n)},F_{s,\nu_{0}}^{(n)}\}$ increases. 
In the other extremal case $\nu=-s/n$, the values $D_{s,-s/n}^{(n)}$
are strictly increasing with $s$ and evaluating limits we get
\[
\lim_{s\to 0^{-}} \dim_{H}(\Sigma_{s,-s/n}^{(n)}) \geq
\lim_{s\to 0^{-}} D_{s,-s/n}^{(n)}=\lim_{s\to 0^{-}} E_{s,-s/n}^{(n)}= \lim_{s\to 0^{-}} F_{s,-s/n}^{(n)}=n-1+\frac{1}{n+1}, 
\]
with right hand side in $(0,1)$.
As $s\to -n$, we explain below that we must have
\begin{align}  \label{eq:riglim}
\lim_{s\to -n^{+}} \dim_{H}(\Sigma_{s,\nu}^{(n)})=\lim_{s\to -n^{+}} D_{s,\nu}^{(n)}=0, \qquad\qquad \nu\in [-s/n,\nu_{0}],
\end{align}
but we verify strictly positive dimensions as soon as $s<-n$. Furthermore
\[
\lim_{s\to -n^{+}}\dim_{P}(\Sigma_{s,\nu_{0} }^{(n)})= 
\lim_{s\to -n^{+}} F_{s,\nu_{0} }^{(n)}=
\frac{1}{2},
\]
and since $D,E,F$ decrease in $s, \nu$, we infer
the lower bound $1/2$ in Theorem~\ref{thm200}.
If $\nu=-s/n$ the limits as $s\to -n$ of both $E,F$ become $n-1+1/(n+1)$, so 
\[
\lim_{s\to -n^{+}}\dim_{P}(\Sigma_{s,-s/n }^{(n)})\geq 
n-1+\frac{1}{n+1}.
\]
Moreover, as soon as $s>-n$, the bounds for the dimensions become positive,
for every $\nu\in [-s/n,\nu_{0}]$. 
Again we have
\[
\dim_{H}(\Sigma_{0,0}^{(n)})=\dim_{P}(\Sigma_{0,0}^{(n)})=n
\]
and we expect the maps $(s,\nu)\longmapsto \dim_{H}(\Sigma_{s,\nu}^{(n)})$
and $(s,\nu)\longmapsto \dim_{P}(\Sigma_{s,\nu}^{(n)})$
to be discontinuous on the curve $\nu=-s/n$ where $\gamma=0$.

We justify \eqref{eq:riglim}.
If $s\to -n$ then $\om\to\infty$ by \eqref{eq:a},
and the sets $W_{w}^{(n)}:=\{\ux\in\mathbb{R}^n: \om(\ux)\geq w \}$
have Hausdorff dimension $(n+1)/(w+1)$ (see Jarn\'ik~\cite{jarnik})
which tends to $0$ as $w\to\infty$. On the other hand, $t\to 1$ gives $\os\to\infty$ and the according sets
$V_{w^{\ast}}^{(n)}:=\{\ux\in\mathbb{R}^n: \os(\ux)\geq w^{\ast} \}$ have Hausdorff dimension at least $n-1$ with equality for $w^{\ast}=\infty$, 
see~\cite{besicovich}.  
This shows the bound in \eqref{eq:nmin1} is optimal 
and explains the dimension drop from Theorem~\ref{thm100}
to Theorem~\ref{thm200}.
Our results complement metrical findings in~\cite{dfsu1},~\cite{dfsu2}.
For example the set of ''dually infinite singular vectors'' $S_{\infty^{\ast}}^{(n)}:=\{\ux\in\mathbb{R}^n: \wos(\ux)=\infty \}$ satisfies $\dim_{H}(S_{\infty^{\ast}}^{(n)})=n-2, \dim_{P}(S_{\infty^{\ast}}^{(n)})=n-1$, see~\cite[Section~1.2]{dfsu1}. Since
$t\to 1$ and $\mu=\mu_{0}$ imply $\wos\to \infty$ by a comment below Theorem~\ref{t0}, 
in this sense the limiting Hausdorff dimension in \eqref{eq:staefan} 
is as large as possible, whereas the packing limit is not.

\section{Proof of Theorem~\ref{t0}}  \label{se5}

We have seen that Theorem~\ref{t0} is equivalent to Theorem~\ref{t1}.
We split the proof of the latter
in existence and non-existence part. 

\subsection{Generalized existence result}  \label{mger}
For the existence part, we
show the following more general claim in the course of
Sections~\ref{mger}-\ref{dimen} that also includes Theorem~\ref{thm100}.

\begin{theorem} \label{thm1}
	Let $n\geq 2$ and $t\in[0,1]$. Derive
	\begin{equation}  \label{eq:tau}
	\mu_{0}=\mu_{0}(n,t)= \frac{(3-2n)t+1-2n+\sqrt{(1-t)[(4n-5)t+4n^2-4n+1]} }{2(n-1)^2}.
	\end{equation}
	Then $\mu_{0}\leq -t/n$ and for every $\mu\in [\mu_{0},-t/n]$ 
	there exists a non-empty set $\Theta^{\ast}=\Theta_{t,\mu}^{\ast (n)}\subseteq \mathbb{R}^n$ consisting of $\ux=\ux_{t,\mu}\in \mathbb{R}^{n}$ and
	\begin{equation} \label{eq:6}
	\ov_{n+1}=t , \quad \uv_{n+1}=\ov_{n}=  (1-n)\frac{t+n\mu }{n+1+2t+(n-1)\mu} , 
	\quad \uv_{1}=\cdots=\uv_{n-1}= \mu,
	\end{equation}
	and
	\begin{equation} \label{eq:JAP}
	\ov_{1}=\cdots=\ov_{n-1}= \frac{n\mu+t}{n+1+t-\mu},
	\end{equation}
	and
	\begin{equation} \label{eq:JEP}
	\uv_{n}= -\frac{1}{n} + \frac{n+1}{n}\cdot \frac{1-t}{1+n+(n-1)t+n(n-1)\mu}
	\end{equation}
	hold. In particular, \eqref{eq:6} implies
	equality in \eqref{eq:3} for any $\ux\in \Theta^{\ast}$.
	The dimensions of
	$\Theta_{t,\mu}^{\ast (n)}$
	satisfy the lower bounds \eqref{eq:hd1}, with equality if
	$t>0$ and $\mu=\mu_{0}$.
\end{theorem}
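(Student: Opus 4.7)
The strategy is to invoke the realization Theorem~\ref{dop} together with the variational principle Theorem~\ref{deep}: I would construct an explicit $n$-template $\mathbf{P}$ whose extremal average slopes \eqref{eq:landp} match the prescribed exponents \eqref{eq:6}--\eqref{eq:JEP}, and whose local contraction rates reproduce the lower bounds in \eqref{eq:hd1}. Existence of $\ux$ with the desired exponents then follows from Theorem~\ref{dop}, while equality in \eqref{eq:3} reduces to a one-time algebraic check from the defining identity for $\sigma$ in \eqref{eq:obey}.

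For the template itself I anticipate a two-phase periodic structure. During an ``ascent'' phase of length $\alpha$, the top function $P_{n+1}$ rises with slope $1$ while a single lower component descends at its admissible negative rate; during a ``compression'' phase of length $\beta$, the bottom $n-1$ functions synchronize at the common level $\mu$ and rise together while $P_{n+1}$ drops, and $P_n$ is given a distinguished intermediate profile so as to produce \eqref{eq:JEP}. The ratio $\alpha/\beta$ and the internal slopes of the compression phase are tuned so that the time-weighted extremes of $P_{n+1}, P_1, P_n$ yield exactly $t, \sigma, \mu, \uv_n$. Condition (i) of Definition~\ref{defnt} together with these averaging equations produces a linear system in the phase parameters whose solvability with nonnegative phase lengths and admissible slopes is equivalent to $\mu\in[\mu_0,-t/n]$; the boundary value $\mu_0$ appears as the place where the convexity condition (iv) becomes sharp, since pushing $\mu<\mu_0$ would force a partial sum $F_j$ to acquire a slope outside $Z(j)$. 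Conditions (ii) and (iii) will be immediate from the monotone ordering and the admissible slope range.

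For the dimension bounds I compute $\delta(\mathbf{P},q)$ separately on the two phases: in the ascent phase only one index satisfies $P_j^\prime<1$, while in the compression phase several do. Weighting by $\alpha$ and $\beta$ yields the asymptotic average $\Delta(\mathbf{P},q)\to A$, and Theorem~\ref{deep} applied to the finite-perturbation closure of $\{\mathbf{P}\}$ gives $\dim_H(\Theta^\ast)\geq n-2+A$. To reach the two packing bounds $B$ and $C$ independently, I would insert into $\mathbf{P}$ arbitrarily long ``idle'' stretches of two different shapes (freezing either the end of an ascent phase or the end of a compression phase) and compute $\overline{\delta}$ of the resulting perturbed templates; a short calculation with the slope set $Z(j)$ reproduces the two expressions in \eqref{eq:bb}, giving $\dim_P(\Theta^\ast)\geq\max\{B,C\}$. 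The claim $\max\{B,C\}\geq n-2+1/n$ is then an elementary comparison of the formulas.

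The main obstacle will be the sharpness assertion at $\mu=\mu_0$. Here I need the matching upper bounds, for which I expect a rigidity argument: at the extremal value $\mu_0$ the convexity condition (iv) together with the defining equation \eqref{eq:muiod} forces every $n$-template realizing the exponent triple to coincide, outside a subset of density zero and up to finite perturbation, with the canonical $\mathbf{P}$ or one of its two admissible idle variants. Passing this rigidity through the supremum form of Theorem~\ref{deep} converts the lower bounds $A$ and $\max\{B,C\}$ into exact values, yielding equality in \eqref{eq:hd1}. Establishing that rigidity cleanly — in particular, ruling out alternative template shapes that still induce the same extremal slopes while producing a larger contraction average — is where I expect the bulk of the technical work to lie.
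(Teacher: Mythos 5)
Your overall strategy coincides with the paper's: build a periodic $n$-template realizing the prescribed extremal ratios, invoke Theorem~\ref{dop} to get the vectors $\ux$, and feed the contraction rates into Theorem~\ref{deep}; the existence part and the Hausdorff bound $n-2+A$ are obtained in essentially the same way (the paper's period has the four switch points $\tilde q_1,\tilde q_2,\tilde q_3,q_1$ and contraction profile $n,\,n-1,\,n-2$, with the constraint $\mu\geq\mu_0$ surfacing as the ordering $\tilde q_2\leq\tilde q_3\leq q_1$ rather than as a violation of condition $(iv)$, but this is a cosmetic difference). Your treatment of the sharpness at $\mu=\mu_0$ as a rigidity statement is also the paper's route: it falls out of the non-existence argument for $\mu<\mu_0$, which shows the combined graph of any $\ux$ attaining equality must follow the Figure~1 pattern up to $o(q)$, so that the supremum in Theorem~\ref{deep} is attained at the constructed template. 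You correctly flag this as the substantial remaining work, and nothing in your sketch supplies it.

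The concrete gap is in your derivation of the packing bound $\max\{B,C\}$. Inserting ``arbitrarily long idle stretches'' into $\mathbf{P}$ does not work as stated, for three reasons. First, a frozen configuration is generally not an admissible template piece: on an interval where the $P_j$ are separated, condition $(iv)$ of Definition~\ref{defnt} forces each partial sum $F_j$ to have slope in $Z(j)=\{j,\,j-1-n\}$, never $0$, and \eqref{eq:sumv} excludes all components resting simultaneously. Second, even granting some admissible quasi-idle shape, inserting stretches of unbounded length infinitely often changes the values $\liminf P_j(q)/q$ and $\limsup P_j(q)/q$ (the ratios drift toward $0$ along the inserted stretches), so the perturbed template no longer realizes \eqref{eq:6}--\eqref{eq:JEP}; the associated vectors then need not lie in $\Theta^{\ast}_{t,\mu}$, and a lower bound on their packing dimension says nothing about $\dim_P(\Theta^{\ast}_{t,\mu})$. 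Third, such a modification is at infinite sup-distance from $\mathbf{P}$, so it is not captured by the finite-perturbation hull of $\{\mathbf{P}\}$ to which you apply Theorem~\ref{deep}. No modification is needed: $\overline{\delta}(\mathbf{P})$ is a $\limsup$ of running averages $\Delta(\mathbf{P},q)$, and because each period begins with its highest local rate, this $\limsup$ is attained along the subsequences $q=\tilde q_{1,N}$ and $q=\tilde q_{3,N}$ inside the unmodified periodic template; evaluating the averages over $[q_0,\tilde q_{1,N}]$ and $[q_0,\tilde q_{3,N}]$ yields exactly the two expressions $B$ and $C$ of \eqref{eq:bb}. As written, your argument for $\dim_P(\Theta^{\ast})\geq\max\{B,C\}$ therefore does not go through and must be replaced by this partial-period computation.
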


The identity $\ov_{1}=(n\mu+t)/(n+1+t-\mu)$ in \eqref{eq:JAP}
agrees with \eqref{eq:istg}
when using \eqref{eq:a}, so it is in fact necessary in our setting.
If $\mu=\mu_{0}$ then the values
in \eqref{eq:JAP}, \eqref{eq:JEP} coincide, thus $\ov_{1}=\cdots=\ov_{n-1}=\uv_{n}$. Moreover,
$\ov_{n+1}=t$ and $\uv_{1}=\mu_{0}$ and equality in 
\eqref{eq:3} directly imply 
all claims \eqref{eq:6}, \eqref{eq:JAP}, \eqref{eq:JEP}, in particular
by \eqref{eq:a} we may express $\wo$ as remarked in Section~\ref{se2}. 
Due to the additional conditions \eqref{eq:JAP}, \eqref{eq:JEP} we see
$\Theta_{t,\mu}^{\ast (n)}\subseteq \Theta_{t,\mu}^{(n)}$ with $\Theta_{t,\mu}^{(n)}$ from Theorem~\ref{t1}, so the latter
and Theorem~\ref{thm100} are indeed implied.
We will make use of the following calculations.

\begin{proposition} \label{pq}
	Let $n\geq 2$ be an integer. For any $t\in [0,1]$ and $\mu_{0}=\mu_{0}(n,t)$
	defined in \eqref{eq:tau} we have
	\[
	-\frac{t}{n}\geq \mu_{0}\geq -\frac{t^2 + (2n+1)t}{n^2-t} \geq -\frac{2}{n-1}t.
	\]
\end{proposition}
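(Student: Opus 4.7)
The plan is to establish the three inequalities of the chain separately, using the defining quadratic identity \eqref{eq:muiod} for $\mu_0$ rather than manipulating the radical in \eqref{eq:tau} directly. Expanding the square shows that $\mu_0$ is the larger root of the convex quadratic
\[
f(\mu):=(1+t+(n-1)\mu)^2-(1-t)(1-\mu)=(n-1)^2\mu^2+[(2n-1)+(2n-3)t]\mu+t^2+3t;
\]
call its roots $\mu_-\leq \mu_+=\mu_0$. Each of the first two inequalities then reduces to locating a candidate point relative to these roots via the sign of $f$.

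For the upper bound $\mu_0\leq -t/n$, direct substitution gives
\[
f(-t/n)=(1+t/n)\bigl[(1+t/n)-(1-t)\bigr]=\frac{(n+1)\,t\,(n+t)}{n^2}\geq 0,
\]
so $-t/n$ lies outside $(\mu_-,\mu_+)$. To identify which side, I will compare $-t/n$ with the midpoint $-\frac{(2n-1)+(2n-3)t}{2(n-1)^2}$ of the roots; cross-multiplying reduces the inequality $-t/n>$ midpoint to $2n^2-n+(n-2)t>0$, clear for $n\geq 2$ and $t\in[0,1]$. Hence $-t/n\geq \mu_+=\mu_0$.

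For the lower bound, let $\mu^{\ast}:=-\frac{t^2+(2n+1)t}{n^2-t}=-\frac{t(t+2n+1)}{n^2-t}$; it suffices to show $f(\mu^{\ast})\leq 0$, which forces $\mu^{\ast}\in[\mu_-,\mu_+]$. The decisive observation is the factorization $n-(n-1)t-t^2=(1-t)(n+t)$. With it, a short bookkeeping over the common denominator $n^2-t$ yields the clean identities
\[
1+t+(n-1)\mu^{\ast}=\frac{n(1-t)(n+t)}{n^2-t},\qquad (1-t)(1-\mu^{\ast})=\frac{(1-t)(n+t)^2}{n^2-t}.
\]
Pulling out the common factor $\frac{(1-t)(n+t)^2}{(n^2-t)^2}$ from the difference collapses $f(\mu^{\ast})$ to
\[
f(\mu^{\ast})=-\frac{(n^2-1)\,t(1-t)(n+t)^2}{(n^2-t)^2}\leq 0,\qquad t\in[0,1],
\]
as desired. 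The final inequality $\frac{t^2+(2n+1)t}{n^2-t}\leq \frac{2t}{n-1}$ is routine: clearing (positive) denominators and cancelling $t$ reduces it to $(n+1)t\leq n+1$.

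The main obstacle is the second step; without the factorization $n-(n-1)t-t^2=(1-t)(n+t)$ the numerator of $f(\mu^{\ast})$ is an opaque polynomial in $t$ whose sign is not obvious. Once this factorization is spotted, the entire computation collapses to the two-line derivation above.
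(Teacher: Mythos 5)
Your proof is correct. You rest, as the paper does, on the defining quadratic relation \eqref{eq:muiod}/\eqref{eq:reflex}, but the way you extract the inequalities from it is genuinely different. The paper evaluates $F_n(t,A_n(t))$ only implicitly: it notes the resulting quartic has no zeros in $(0,1)$, concludes that $\mu_0-A_n(t)$ cannot change sign on $[0,1]$, and then pins down the sign by differentiating the radical expression $H_n$ and examining its blow-up as $t\to 1^-$ (and runs an analogous argument for $B_n(t)=-t/n$). You instead treat $f(\mu)=(1+t+(n-1)\mu)^2-(1-t)(1-\mu)$ as a convex quadratic in $\mu$ with $\mu_0$ its larger root, and locate the two candidate points by direct sign evaluation: $f(-t/n)=\frac{(n+1)t(n+t)}{n^2}\geq 0$ together with the vertex comparison (reducing to $2n^2-n+(n-2)t>0$) gives the upper bound, while the factorization $n-(n-1)t-t^2=(1-t)(n+t)$ collapses $f(\mu^{\ast})$ to $-\frac{(n^2-1)t(1-t)(n+t)^2}{(n^2-t)^2}\leq 0$, giving the lower bound; the last inequality you handle by the same routine clearing of denominators as the paper. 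Your route is purely algebraic and avoids the calculus/limit step entirely; moreover your explicit factorization of $f(\mu^{\ast})$ makes precise the paper's assertion that the quartic $G_n(t)$ vanishes exactly at $t=0,1$ and doubly at $t=-n$, so in a sense your computation supplies the detail the paper leaves to continuity considerations. The only points you assert without full detail — that $\mu_0$ is the larger root of your expanded quadratic (discriminants match) and that real roots exist on $[0,1]$ — are routine verifications and correct.
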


\begin{proof}
	We check the most challenging middle inequality first. Define	
	\[
	F_{n}(x,y)=(1+x+(n-1)y)^2 - (x-1)(y-1) = 0.
	\]
	We use that $\mu_{0}$ is solution to the quadratic equation
	\begin{equation} \label{eq:reflex}
	F_{n}(t,\mu_{0})=(1+t+(n-1)\mu_{0})^2 - (t-1)(\mu_{0}-1) = 0,
	\end{equation}
	equivalent to \eqref{eq:muiod}.
	Taking $A_{n}(t)=-(t^2+(2n+1)t)/(n^2-t)$ we see that
	for the resulting identity
	\[
	G_{n}(t) = F_{n}(t,A_{n}(t))= 0
	\]
	then leads to a quartic polynomial with solutions $t=0, t=1$
	and a double solution $t=-n\notin [0,1]$.
	Hence by the continuity of $\mu_{0}(n,t), A_{n}(t)$ and $F_{n}(t)$ in $t$
	we see that $F_{n}(t,A_{n}(t))$ and
	$\mu_{0}(n,t)-A_{n}(t)$ do not change sign on $[0,1]$ (as otherwise $\mu_{0}(n,t)=A_{n}(t)$
	for some $t\in(0,1)$ but then $F_{n}(t,\mu)=F_{n}(t,A_{n}(t))=G_{n}(t)=0$,
	contradiction to $G_{n}$ not having zeros in $(0,1)$). Thus 
	either $\mu_{0}\geq A_{n}(t)$ or $\mu_{0}\leq A_{n}(t)$ for all $t$ in $[0,1]$.
	
	To check that the first case occurs, it is convenient to use a different approach.
	The claim middle inequality, or $\mu_{0}\geq A_{n}(t)$, are equivalent to
	\[
	H_{n}(t)= \frac{(3-2n)t+1-2n+\sqrt{(1-t)[(4n-5)t+4n^2-4n+1]} }{2(n-1)^2}
	+ \frac{t^2 + (2n+1)t}{n^2-t} \geq 0.
	\]
	Then we calculate $H_{n}(0)=H_{n}(1)=0$ and
	\begin{align*}
	H_{n}^{\prime}(t)&= \frac{2n+1+2t}{n^2-t}+\frac{(t^2+(2n-1)t)}{(n^2-t)^2} +\frac{3}{2(n-1)^2}\\&- \frac{n}{(n-1)^2} - \frac{2n^2-4n+3+(4n-5)t}{2(n-1)^2 \sqrt{(1-t)[(4n-5)t+(2n-1)^2]} }
	\end{align*}
	and it is easily seen that
	$H_{n}^{\prime}$ has a pole at $t=1$ and the limit is $-\infty$
	as $t$ approaches $1$ from below. Thus $H_{n}(t)$ and hence 
	$\mu(n,t)-A_{n}(t)$ are non-negative 
	for $t\in [1-\epsilon,1]$, and by the above findings 
	actually for all $t\in [0,1]$.
	
	For the most left inequality, we can proceed very similarly 
	with $B_{n}(t)=-t/n$ instead of $A_{n}(t)$. Then $I_{n}(t)=F_{n}(t,B_{n}(t))=0$
	has solutions $-n, 0$ for $t$ and a similar argument shows $\mu_{0}(n,t)-B_{n}(t)$
	does not change sign on $[0,1]$, and by a similar derivative 
	argument we can again check the expression is never
	positive.
	Finally the most right inequality can be verified straight 
	forward using $t\in [0,1]$.
\end{proof}

By Theorem~\ref{dop}, 
it suffices to construct for given $t\in [0,1], \mu\in [\mu_{0},-t/n]$ an $n$-template $\textbf{P}=\textbf{P}_{t,\mu}$
inducing the upper and lower limit 
values $\uv_{j}, \ov_{j}$ as in Theorem~\ref{thm1}.
We may assume $0<t<1$ in the sequel. 
This follows from the compactness of the spectrum 
of $(\uv_{1},\ldots,\ov_{n+1})\subseteq \mathbb{R}^{2n+2}$ settled in~\cite{roytop} and the continuous dependency of the bounds
for $\sigma, \mu$ from $t$ in the theorem. 
The case 
$t=0$ implies $\uv_{j}=\ov_{j}=0$ for all $j$, which yields the 
trivial $n$-template
$P_{j}(q)=0$ for $1\leq j\leq n+1$, $q\in[0,\infty)$ anyway. It satisfies Theorem~\ref{thm1} and puts the origin in the
spectrum of $(\uv_{1},\ldots,\ov_{n+1})\in\mathbb{R}^{2n+2}$.
We should note that
Roy's spectrum differs from ours, 
but it is easily seen that compactness is preserved when switching
between the formalisms. 
In the sequel, we call $q$ a switch point
of $\textbf{P}$
if some $P_{j}$ is not differentiable at $q$, i.e. $P_{j}$ has a local
maximum or minimum.

\subsection{Preperiod of $\textbf{P}_{t,\mu}$}

As customary when applying the variational principle, we want to 
define an $n$-template with a periodic pattern. 
First we describe how to obtain
the initial state of the repeating construction. For given $t\in (0,1)$ 
and $\mu\in [\mu_{0},-t/n]$ with $\mu_{0}$ as in \eqref{eq:tau},
the goal is the following scenario: At some $q_{0}>0$ we
have
\begin{equation}  \label{eq:ida}
\frac{P_{1}(q_{0})}{q_{0}} =\cdots= \frac{P_{n-1}(q_{0})}{q_{0}}=\mu, \qquad \frac{P_{n}(q_{0})}{q_{0}} = \theta, \qquad \frac{P_{n+1}(q_{0})}{q_{0}} =t,
\end{equation}
which is the starting point of Figure~1 below.
Here according to \eqref{eq:sumv} we put
\begin{equation}  \label{eq:psit}
\theta= -(t+(n-1)\mu).
\end{equation}
Moreover, at $q_{0}$ we want the function $P_{n+1}$ to decay with slope
$-n$ and $P_{1},\ldots,P_{n}$ rise with slope $+1$.
It follows easily from the prescribed range for $\mu$ and 
Proposition~\ref{pq} that
\begin{equation} \label{eq:dieor}
-n\leq -\frac{2}{n-1}\leq \mu_{0}\leq \mu\leq -\frac{t}{n} \leq \theta\leq t\leq 1, \qquad t\in [0,1],
\end{equation}
so indeed the values $P_{j}(q_{0})/q_{0}$ belong to the required
interval $[-n,1]$, and their ordering
is as in \eqref{eq:ida}. 

We describe how the initial data \eqref{eq:ida} can be achieved.
Take any $q_{0}>0$. 
We start at $q=0$ with $P_{1}(0)=\ldots=P_{n}(0)=0$.
Let $P_{n+1}$ rise with slope $+1$ until some switch point $q^{\prime}\in(0,q_{0}]$, and then decay with slope $-n$ until $q_{0}$, where $q^{\prime}$ is chosen so that $P_{n+1}(q_{0})=tq_{0}$. 
By equating 
\[
P_{n+1}(q_{0}) = tq_{0} = q^{\prime} - n(q_{0}-q^{\prime}),
\]
we see
\[
q^{\prime}= \frac{n+t}{ n+1 } q_{0}.
\] 
To get the desired value
for $P_{n}(q_{0})$, we let $P_{n}$ together with $P_{1},\ldots,P_{n-1}$
initially
decay with slope $-1/n$ up to some switch point $q^{\prime\prime}$ where 
$P_{n}$ starts rising with slope $1$ up to $q_{0}$, with $q^{\prime\prime}$
chosen so that $P_{n}(q_{0}) = \theta q_{0}$. One determines
\[
q^{\prime\prime}= \frac{(1-\theta)n}{n+1} q_{0} \leq  q^{\prime},
\]
where the inequality holds due to \eqref{eq:dieor}.
We remark that $q^{\prime\prime}=0$ if $t=1$.
We take the remaining first $n-1$ successive minima functions $P_{1},\ldots,P_{n-1}$
identical in $[0,q_{0}]$, so that they are
determined in $[0,q_{0}]$ by the vanishing sum property \eqref{eq:sumv}
and the description of $P_{n}, P_{n+1}$ above.
This means $P_{1},\ldots,P_{n-1}$ decay with slope $-1/n$ in $[0,q^{\prime\prime}]$,
with slope $-2/(n-1)$ in $[q^{\prime\prime}, q^{\prime}]$
and finally rise with slope $+1$ in $[q^{\prime}, q_{0}]$.
This concludes the preperiod.

\subsection{Period of $\textbf{P}_{t,\mu_{0}}$: Special case $\mu=\mu_{0}$} \label{exi}

We take $\mu=\mu_{0}$ throughout this section and consequently 
impliclty consider $\theta=\theta_{0}$ derived from \eqref{eq:psit} 
with this choice. 
We remark that by our choice of $\mu_{0}$, we verify that
$t,\mu=\mu_{0},\theta=\theta_{0}$ in \eqref{eq:psit} 
are linked by the quadratic identity
\begin{equation}  \label{eq:rr}
\theta^2 - 2\theta -\mu t+t+\mu= (\theta-1)^2 - (\mu-1)(t-1)=0,
\end{equation}
which reflects \eqref{eq:muiod}.

The figure shows the first period $[q_{0},q_{1}]$ 
of the iterative construction, 
starting from $q_{0}$ where \eqref{eq:ida} holds, up to some
$q_{1}>q_{0}$ to be determined below.
We continue the slopes as from the preperiod at $q_{0}$ 
to the right, i.e.
slope $-n$ for $P_{n+1}$ and $+1$ for $P_{1},\ldots,P_{n}$. 
At some point $\tilde{q}_{1}>q_{0}$ the functions $P_{n}, P_{n+1}$
will intersect.
By equating
\[
P_{n+1}(\tilde{q}_{1})= tq_{0}-n(\tilde{q}_{1}-q_{0}) =\theta q_{0}+(\tilde{q}_{1}-q_{0})=P_{n}(\tilde{q}_{1}),
\]
and a brief computation, upon inserting \eqref{eq:psit}, 
this point is given by
\begin{equation}  \label{eq:tilq1}
\tilde{q}_{1}= \frac{n+1+2t+(n-1)\mu}{n+1}\cdot q_{0}
\end{equation}
and induces the quotients
\begin{equation}  \label{eq:observed}
\frac{P_{n}(\tilde{q}_{1})}{\tilde{q}_{1}}=\frac{P_{n+1}(\tilde{q}_{1})}
{\tilde{q}_{1}}= 
(1-n)\frac{t+n\mu }{n+1+2t+(n-1)\mu}.
\end{equation}
Note that the right hand side of \eqref{eq:observed}
is the desired slope for $\uv_{n+1}$ and $\ov_{n}$ in \eqref{eq:6}.
At $\tilde{q}_{1}$ the functions $P_{n}, P_{n+1}$ exchange slopes, so $P_{n}$
starts decaying with slope $-n$ and $P_{n+1}$ rising with slope $+1$.
Then at some point $\tilde{q}_{2}>\tilde{q}_{1}$
the rising functions $P_{1}=\cdots=P_{n-1}$ will intersect $P_{n}$. 
From
\begin{equation}  \label{eq:qqt}
P_{n}(\tilde{q}_{2})= 
tq_{0} - n(\tilde{q}_{2}-q_{0}) = \mu q_{0} + (\tilde{q}_{2}-q_{0})=P_{1}(\tilde{q}_{2})\cdots = P_{n-1}(\tilde{q}_{2})
\end{equation}
we derive 
\begin{equation}  \label{eq:rqt}
\tilde{q}_{2} = \frac{t-\mu+n+1}{n+1}q_{0}.
\end{equation}
To the right of this
switch point $\tilde{q}_{2}$, let the slopes of $P_{1},\ldots, P_{n-1}$ become $-2/(n-1)$
whereas the slope of $P_{n}$ becomes $+1$. Observe $P_{n+1}$ also still
increases with slope $+1$. Since we can assume $t<1$, then
at some point $q_{1}\geq \tilde{q}_{1}$ 
we will have $P_{n+1}(q_{1})/q_{1}=t$, concretely
from imposing $P_{n+1}(q_{1})=\theta q_{0}+(q_{1}-q_{0})=q_{1}t$
we calculate
\begin{equation} \label{eq:q1}
q_{1}= \frac{\theta-1}{t-1} q_{0} =\frac{ \mu-1 }{ \theta-1 } q_{0},
\end{equation}
where the right equality uses $\mu=\mu_{0}, \theta=\theta_{0}$ 
and reflects \eqref{eq:rr}.
With  a little effort it can be checked, and follows from the below calculations,
that actually $q_{1}\geq \tilde{q}_{2}$. Indeed by some
rearrangements this is equivalent to
\[
t^2+(2n+1-\mu)t+n^2 \mu \geq 0 \qquad 
\Longleftrightarrow \qquad \mu\geq -\frac{t^2 + (2n+1)t}{n^2-t},
\]
which is confirmed in Proposition~\ref{pq}.
Moreover, at the same point $q_{1}$ we further have
\[
\frac{P_{n}(q_{1})}{q_{1}} = 
\frac{\mu q_{0}+(q_{1}-q_{0})}{q_{1}}=
\theta, \qquad
\frac{P_{1}(q_{1})}{q_{1}} = \cdots = \frac{P_{n-1}(q_{1})}{q_{1}} = \mu.
\]
The left identity is equivalent to \eqref{eq:q1}, the right 
follows consequently
from $P_{n+1}(q_{1})/q_{1}=t$, the
vanishing sum property \eqref{eq:sumv} and $P_{1}(q_{1})=\cdots=P_{n-1}(q_{1})$.
%
Finally we let $q_{1}$ be a switch point where $P_{n+1}$ starts
decaying with slope $-n$, whereas $P_{1},\ldots,P_{n-1}$ start
rising with slope $+1$. Note $P_{n}$ is differentiable at $q_{1}$ 
with slope $+1$ and clearly $q_{1}>q_{0}$ if $t>0$, which we can assume.

Since the slopes as well as the right-sided derivatives of all $P_{j}$
at $q_{1}$ are then identical to the data at $q_{0}$,
at $q_{1}$ we have precisely the same conditions as at $q_{0}$. Thus,
up to a scaling factor $q_{1}/q_{0}$ in each step,
we can extend the construction from $[q_{0},q_{1}]$ 
periodically ad infinitum. 
Together with the preperiod, this will give an
$n$-template on $[0,\infty)$.

\begin{tikzpicture}

\draw[thick,->] (0,0) -- (13,0) node[anchor=west] {q};
\draw[thick,->] (0,-4) -- (0,6) node[anchor=south] {P(q)};

\draw[dotted,->] (0,0) -- (11,6) node[anchor=west] 
{$\overline{\varphi}_{n+1}=t $};

\draw[dotted,->] (0,0) -- (11,3) node[anchor=west] {$\theta_{0}$};

\draw[dotted,->] (0,0) -- (10.8,-3.27) node[anchor=west] {$\underline{\varphi}_{1}=\cdots=\underline{\varphi}_{n-1}=\mu_{0} $};

\draw[dotted,->] (0,0) -- (11,3.5) node[anchor=west] {$ \underline{\varphi}_{n+1}=\overline{\varphi}_{n}=\sigma $};

\draw[dotted,->] (0,0) -- (10.8,-0.38) node[anchor=west] {$\overline{\varphi}_{1}=\cdots=\overline{\varphi}_{n-1}=\underline{\varphi}_{n} $};

\draw (4,1.09) -- (8,4.35) node[anchor=south east] {$P_{n+1}$};

\draw [line width=0.45mm] (5.2,-0.2) -- (4,-1.225)  node[anchor=north east] {$P_{1}=\cdots=P_{n-1}$};

\draw (5.2,-0.2) -- (8,2.2) node[anchor=north west] {$P_{n}$} ;

\draw (5.2,-0.2) -- (4,2.2) node[anchor=south east] {$P_{n+1}$};

\draw  [line width=0.45mm] (5.2,-0.2) -- (8,-2.4) node[anchor=north east] {$P_{1}=\cdots=P_{n-1}$};

\fill[black] (4,0) circle (0.06cm) node[anchor=south] {$q_{0}$};

\fill[black] (4.4,0) circle (0.06cm) node[anchor=south] {$\tilde{q}_{1}$};

\fill[black] (5.2,0) circle (0.06cm) node[anchor=south] {$\tilde{q}_{2}$};

\fill[black] (8,0) circle (0.06cm) node[anchor=south] {$q_{1}$};

\node at (3.85,0.75) { $P_{n}$};

\node at (7,0.95) { $+1$};

\node at (7,3.15) { $+1$};

\node at (7.1,-1.1) { $-\frac{2}{n-1}$};

\node at (4.7,-1) { $+1$};

\node at (5,1) { $-n$};

\node at (5.5,-4.5) {Figure 1: Sketch period of $\textbf{P}_{t,\mu_{0}}$ (special case $\mu=\mu_{0}$)};

\end{tikzpicture}


\subsection{Period of $\textbf{P}_{t,\mu}$: The general case}  \label{gc}

Now let $\mu$ be arbitrary in $[\mu_{0},-t/n]$.
We start the period construction as in the special case $\mu=\mu_{0}$:
After the preperiod we derive at some $q_{0}>0$ 
where for $\theta=-(t+(n-1)\mu)$ we are given
\[
\frac{P_{1}(q_{0})}{q_{0}} =\cdots= \frac{P_{n-1}(q_{0})}{q_{0}}=\mu, \qquad \frac{P_{n}(q_{0})}{q_{0}} = \theta, \qquad \frac{P_{n+1}(q_{0})}{q_{0}} =t.
\] 

We describe the period $[q_{0},q_{1}]$. Starting from $q_{0}$,
we let $P_{n+1}$ decay with slope $-n$ and the others rise
with slope $+1$ until at $\tilde{q}_{1}$ the functions 
$P_{n}, P_{n+1}$ meet and exchange slopes. Then at some point
$\tilde{q}_{2}>\tilde{q}_{1}$ the functions $P_{1}=\cdots=P_{n-1}$
meet $P_{n}$. The calculations for $\tilde{q}_{1},\tilde{q}_{2}$
and $P_{n}(\tilde{q}_{2})$ are precisely
as in the special case in Section~\ref{exi}, 
for general $\mu$ and implied $\theta$ throughout.

Now at $\tilde{q}_{2}$ we make a twist to our construction above
by prescribing that all $P_{1},\ldots,P_{n}$ decay with slope $-1/n$
in $[\tilde{q}_{2}, \tilde{q}_{3}]$ for some $\tilde{q}_{3}\geq \tilde{q}_{2}$ to be determined, while $P_{n+1}$ keeps increasing with slope $+1$. At some point $q_{1}>q_{0}$ we will have $P_{n+1}(q_{1})/q_{1}= t$
again. It is obvious that $q_{1}>\tilde{q}_{1}$  
and since $\mu\geq \mu_{0}$ it follows from the 
argument in the special case and $\mu\geq \mu_{0}$
that even $q_{1}\geq \tilde{q}_{2}$. The identity \eqref{eq:observed}
and the left identity in \eqref{eq:q1} 
hold for the same reason as in the special case, in particular 
$q_{1}$ is evaluated as for $\mu=\mu_{0}$.

We will choose $\tilde{q}_{3}\leq q_{1}$ and
in the interval $[\tilde{q}_{3}, q_{1}]$ we let $P_{n}$ rise with slope
$+1$ and $P_{1}, \ldots,P_{n-1}$ decay with slope $-2/(n-1)$.
(In particular, if $\tilde{q}_{2}=\tilde{q}_{3}$ then the graph
is as for $\mu=\mu_{0}$). We claim that
upon a proper choice of $\tilde{q}_{3}\in [\tilde{q}_{2}, q_{1}]$, at $q_{1}$ again we will have
\[
\frac{P_{1}(q_{1})}{q_{1}} =\cdots= \frac{P_{n-1}(q_{1})}{q_{1}}=\mu, \qquad \frac{P_{n}(q_{1})}{q_{1}} = \theta, \qquad \frac{P_{n+1}(q_{1})}{q_{1}} =t.
\]
Assume this is shown.
Then the period is finished and again we repeat the construction
upon scaling by $q_{1}/q_{0}$ in each step.

To evaluate $\tilde{q}_{3}$, we have to satisfy the identity
\[
\theta q_{1}=
P_{n}(\tilde{q}_{2}) - \frac{1}{n} (\tilde{q}_{3}- \tilde{q}_{2}) + (q_{1}- \tilde{q}_{3}),
\]
thus inserting for $\theta$ we get
\[
\tilde{q}_{3}= \frac{n}{n+1}\left[ P_{n}(\tilde{q}_{2})+\frac{1}{n}\tilde{q}_{2}+(1+t+(n-1)\mu)q_{1} \right].
\]
Inserting from \eqref{eq:qqt}, \eqref{eq:rqt}, \eqref{eq:q1} we get
\begin{align} \label{eq:tilq3}  
\tilde{q}_{3} &= \frac{n}{n+1}\left[ t+n+\frac{1-n}{n} (t-\mu+n+1)+\frac{(1+t+(n-1)\mu)^2}{1-t} \right]\cdot q_{0} \nonumber \\
&= \frac{1+t+(n-1)\mu}{n+1}\cdot \frac{1+n+(n-1)(t+n\mu)}{1-t} \cdot q_{0}.
\end{align}
We now finally check analytically 
that indeed $\tilde{q}_{3}\in [\tilde{q}_{2}, q_{1}]$ as claimed for
any choice $n\geq 2$, $t\in(0,1)$ and $\mu\in[\mu_{0},-t/n]$.

Writing $q_{1}/q_{0}=(1-\theta)/(1-t)$ and $\tilde{q}_{3}/q_{0}=(1-\theta)(1+n(1-\theta)/(1-t))/(n+1)$, the claim
$\tilde{q}_{3}\leq q_{1}$ can be rearranged to $\theta\geq -t/n$ which is true by \eqref{eq:dieor}. For the estimate $\tilde{q}_{2}\leq \tilde{q}_{3}$,
we check by inserting that there is equality 
$\tilde{q}_{2}= \tilde{q}_{3}$ at the minimum value $\mu=\mu_{0}$,
which agrees with the special case from the section above. 
Since $\tilde{q}_{2}$ decreases with slope $-1/(n+1)$ 
as a function of $\mu$ by \eqref{eq:rqt}, to conclude
it suffices to check that for fixed $t$ the value $\tilde{q}_{3}$ 
in \eqref{eq:tilq3} increases
as a function of $\mu$. We calculate
\[
\frac{d}{d\mu} \tilde{q}_{3} = \frac{(n-1)[(2n-1)t+2n+1+2n(n-1)\mu]}{(n+1)(1-t)}q_{0}.
\]
Since $t\in [0,1]$, for this to be positive we require
\[
(2n-1)t+2n+1+2n(n-1)\mu \geq 0.
\]
Using $\mu\geq \mu_{0}\geq -\frac{2}{n-1}t$  by Proposition~\ref{pq}
and $t\in[0,1]$ this can be readily verified.
This completes the construction of first period $[q_{0},q_{1}]$ 
for general $\mu$.
The periodic continuation to an $n$-template on $[0,\infty)$
via adjacent scaled copies is performed as in Section~\ref{exi}.

\begin{tikzpicture}

\draw[thick,->] (0,0) -- (13,0) node[anchor=west] {q};
\draw[thick,->] (0,-4) -- (0,6) node[anchor=south] {P(q)};

\draw[dotted,->] (0,0) -- (11,6) node[anchor=west] {$\overline{\varphi}_{n+1} =t$};

\draw[dotted,->] (0,0) -- (11,1) node[anchor=west] {$\theta$};

\draw[dotted,->] (0,0) -- (11,2.05) node[anchor=west] {$\underline{\varphi}_{n+1}=\overline{\varphi}_{n}=\sigma$};

\draw[dotted,->] (0,0) -- (11,-2.5) node[anchor=west] {$\underline{\varphi}_{1}=\cdots=\underline{\varphi}_{n-1}=\mu $};

\draw[dotted,->] (0,0) -- (11,-0.4) node[anchor=west] {$\overline{\varphi}_{1}=\cdots=\overline{\varphi}_{n-1}$};

\draw[dotted,->] (0,0) -- (11,-1.4) node[anchor=west] {$\underline{\varphi}_{n}$};

\draw (5.5,0.5) -- (9.5,5.2) node[anchor=south] {$P_{n+1}$};

\draw [line width=0.7mm] (6.4,-0.25) -- (8,-1)  node[anchor=west] {$P_{1}=\cdots=P_{n}$};

\draw (8,-1) -- (9.5,0.85) node[anchor=south] {$P_{n}$} ;

\draw (6.4,-0.25) -- (5.5,3) node[anchor=south] {$P_{n+1}$};

\draw [line width=0.4mm] (6.4,-0.25) --  (5.5,-1.25)  node[anchor=north east] {$P_{1}=\cdots=P_{n-1}$};

\draw [line width=0.4mm] (8,-1) -- (9.5,-2.15) node[anchor=north east] {$P_{1}=\cdots=P_{n-1}$};


\fill[black] (5.5,0) circle (0.06cm) node[anchor=south] {$q_{0}$};

\fill[black] (6.04,0) circle (0.06cm) node[anchor=south] {$\tilde{q}_{1}$};

\fill[black] (6.4,0) circle (0.06cm) node[anchor=south] {$\tilde{q}_{2}$};

\fill[black] (8,0) circle (0.06cm) node[anchor=south] {$\tilde{q}_{3}$};

\fill[black] (9.5,0) circle (0.06cm) node[anchor=south] {$q_{1}$};


\node at (5.2,0.7) { $P_{n}$};

\node at (7.2,3) { $+1$};

\node at (8.7,0.3) { $+1$};

\node at (7.1,-1) { $-\frac{1}{n}$};

\node at (9.3,-1.5) { $-\frac{2}{n-1}$};

\node at (5.4,-0.8) { $+1$};

\node at (5.4,1.7) { $-n$};

\node at (5.5,-4.5) {Figure 2: Sketch period of $\textbf{P}_{t,\mu}$ (general case)};

\end{tikzpicture}

\subsection{Evaluation of extremal values}

Since the maximum slope $P_{n+1}(q)/q$ of $P_{n+1}$
for $q\in [q_{0},q_{1}]$ is clearly attained
at the interval ends where it takes the value $t$, and similarly
the minimal slopes for $P_{1}=\cdots=P_{n-1}$ at the interval ends
equal to $\mu$, we see that 
\[
\liminf_{q\to\infty} \frac{P_{j}(q)}{q}= \mu, \; (1\leq j\leq n-1), \qquad 
\limsup_{q\to\infty} \frac{P_{n+1}(q)}{q}= t.
\] 
Moreover, obviously the expression $P_{n+1}(q)/q$ for $q\in [q_{0},q_{1}]$ is minimal at its local minimum $\tilde{q}_{1}$, and similarly the slope of $P_{n}$ attains its
maximum within $[q_{0},q_{1}]$ at $\tilde{q}_{1}$. 
From Theorem~\ref{dop} and \eqref{eq:idf}, \eqref{eq:landp}, \eqref{eq:observed} we conclude \eqref{eq:6}. 

Finally, for \eqref{eq:JAP}, \eqref{eq:JEP} we show that within $q\in[q_{0},q_{1}]$, the  values $P_{1}(q)/q=\cdots=P_{n-1}(q)/q$
take their maxima at $\tilde{q}_{2}$, and $P_{n}(q)/q$ its minimum at $\tilde{q}_{3}$. 
It is clear that the extrema in question are taken in the interval $I=[\tilde{q}_{2},\tilde{q}_{3}]$, since outside the slopes
take the extremal values $-n$ and $1$. We show that 
$P_{n}(\tilde{q}_{2})/\tilde{q}_{2}= (n\mu+t)/(n+1+t-\mu)$ 
exceeds the slope $-1/n$ of $P_{1},\ldots,P_{n}$
in $I$. Then the values $P_{j}(q)/q$ for $j=1,2,\ldots,n$ decrease within $I$ 
and the claim follows. To show 
\[
\frac{n\mu+t}{n+1+t-\mu} > -\frac{1}{n}
\]
we rearrange to the equivalent form $n+1>(n-1)t+ (n^2+1)\mu^2$ 
which is true since
as $t\in [0,1]$ and $\mu\leq 0$ by Proposition~\ref{pq} we have
\[
n+1 > n-1 \geq (n-1)t \geq (n-1)t+ (n^2+1)\mu^2.
\]
The claim \eqref{eq:JAP} follows directly.
For \eqref{eq:JEP}, from \eqref{eq:qqt}, \eqref{eq:rqt}, \eqref{eq:tilq3} we calculate
\[
\uv_{n}=\frac{P_{n}(\tilde{q}_{3})}{\tilde{q}_{3}} =
\frac{P_{n}(\tilde{q}_{2}) - \frac{\tilde{q}_{3}-\tilde{q}_{2}}{n}}{\tilde{q}_{3}}=
-\frac{1}{n} + \frac{n+1}{n}\cdot \frac{1+t+(n-1)\mu}{1+t+(n-1)\mu+\frac{n(1+t+(n-1)\mu)^2}{1-t}}.
 \]
Dividing numerator and denominator by $\theta=1+t+(n-1)\mu$
yields the claimed expression after a brief rearrangement.
We again conclude with Theorem~\ref{dop} and \eqref{eq:idf}, \eqref{eq:landp}.
Finally, inserting for $\underline{\varphi}_{1}, \underline{\varphi}_{n+1}, \ov_{n+1}$ from \eqref{eq:6},
a calculation verifies equality in \eqref{eq:3}.

\subsection{Deduction of metrical results}  \label{dimen}

We bound the Hausdorff and packing dimensions of the
set $\Theta_{t,\mu}^{\ast (n)}$ in Theorem~\ref{thm1}
as in \eqref{eq:hd1}. First assume $0<t<1$ again where our
construction is well-defined. 
Since any set $\Theta_{t,\mu}^{\ast (n)}$
is contained in $\Theta_{t,\mu}^{(n)}$ from Theorem~\ref{t1}, 
clearly \eqref{eq:hd1} follows.
We determine the contraction rates for 
the $n$-template $\mathbf{P}=\textbf{P}_{t,\mu}$
constructed above. 
We evaluate the local contraction rates within the period interval
$[q_{0},q_{1}]$ as
\begin{equation}  \label{eq:dieslopes}
\delta(\mathbf{P},q)=  \begin{cases}
n, & \text{if}\ q\in [q_{0}, \tilde{q}_{1}], \\
n-1, & \text{if}\ q\in [\tilde{q}_{1}, \tilde{q}_{3}], \\ 
n-2, & \text{if}\ q\in [\tilde{q}_{3}, q_{1}]. \\
\end{cases}
\end{equation}
Denoting for $j\geq 1$ the $j$-th period 
interval $I_{j}=[q_{j-1},q_{j}]$, this is true accordingly in $I_{j}$. 
From the variational principle we directly conclude that
the Hausdorff and packing dimensions cannot be less than $n-2$.
For the precise calculation, we observe that the local rate
decays within each interval $I_{j}$. 
We readily conclude that the lower 
limit is attained when considering intervals $[q_{0},q_{N}]$, 
and in fact by periodicity the resulting
average contraction rate in these intervals 
is independent of $N\geq 1$. So from the variational principle 
Theorem~\ref{deep} we get
\begin{align*}
\dim_{H}(\Theta_{t,\mu}^{\ast (n)}) \geq \underline{\delta}(\mathbf{P}) &= \frac{\int_{q_{0}}^{q_{1}} \delta(\mathbf{P},q)}{q_{1}-q_{0}}=
\frac{  n(\tilde{q}_{1}-q_{0}) + (n-1)(\tilde{q}_{3}-\tilde{q}_{1}) 
+(n-2)(q_{1}-\tilde{q}_{3})  }{q_{1}-q_{0}}   \\
&= \frac{(n-2)q_{1} + \tilde{q}_{1} + \tilde{q}_{3} - nq_{0} }{q_{1}-q_{0}}
= n-2+ \frac{\tilde{q}_{1} + \tilde{q}_{3}-2q_{0} }{q_{1}-q_{0}}.
\end{align*} 
Inserting for $\tilde{q}_{1},\tilde{q}_{3},q_{1}$ from
\eqref{eq:tilq1}, \eqref{eq:q1}, \eqref{eq:tilq3} we verify
$\dim_{H}(\Theta_{t,\mu}^{\ast (n)}) \geq n-2+A$ 
as in \eqref{eq:hd1}.
For the upper limit, we consider intervals $[q_{0},\tilde{q}_{1,N}]$
and $[q_{0},\tilde{q}_{3,N}]$
for large $N$, where $\tilde{q}_{i,N}$ denotes for $i=1,2,3$ the 
value corresponding to $\tilde{q}_{i}\in [q_{0},q_{1}]$ in the
interval $[q_{N-1},q_{N}]$. In particular $q_{N}\leq \tilde{q}_{1,N}\leq \tilde{q}_{3,N}\leq q_{N+1}$ for all $N$ and the contraction rates in the subintervals
$[q_{N}, \tilde{q}_{1,N}]$, $[\tilde{q}_{1,N}, \tilde{q}_{3,N}]$
and $[\tilde{q}_{3,N},q_{N+1}]$
take the values as for $N=0$ in \eqref{eq:dieslopes}. Hence
\[
\dim_{P}(\Theta_{t,\mu}) \geq  
\overline{\delta}(\mathbf{P}) \geq \max\{ S,T\} 
\]
where $S$ and $T$ are respecitvely the average limit contraction rates
in the intervals $[q_{0}, \tilde{q}_{N,1}]$ and $[q_{0}, \tilde{q}_{N,3}]$
respectively as $N\to\infty$. In fact it is not hard to check
equality $\overline{\delta}(\mathbf{P})= \max\{ S,T\}$.
To conclude,
we show $S\geq B, T\geq C$ with
$B,C$ as in \eqref{eq:bb}. Since we identified $\underline{\delta}(\mathbf{P})$
as the average contraction rate in any interval $[q_{0},q_{N}]$ and $q_{0}=o(q_{N})$ as $N\to\infty$, we evaluate
\begin{align*}
S&= \lim_{N\to\infty} \frac{ \int_{q_{0}}^{\tilde{q}_{1,N}} \delta(\mathbf{P},q)\; dq }{ \tilde{q}_{1,N}-q_{0}  } =\lim_{N\to\infty} \frac{ \underline{\delta}(\mathbf{P})(q_{N}-q_{0}) + n(\tilde{q}_{1,N}-q_{N}) }{\tilde{q}_{1,N}-q_{0}} \\ &=
 \lim_{N\to\infty} \frac{ \underline{\delta}(\mathbf{P})q_{N} + n(\tilde{q}_{1,N}-q_{N}) }{\tilde{q}_{1,N}} =
n - (n-\underline{\delta}(\mathbf{P}))\cdot \lim_{N\to\infty} \frac{q_{N}}{ \tilde{q}_{1,N} }.
\end{align*}
Now since $q_{N}/\tilde{q}_{1,N}$
is independent of $N$, inserting $\underline{\delta}(\mathbf{P})\geq n-2+A$ we infer
\[
S\geq n- \frac{q_{0}}{ \tilde{q}_{1} } (n-\underline{\delta}(\mathbf{P}))
\geq n - (2-A)\frac{q_{0}}{ \tilde{q}_{1} } = n- \frac{(2-A)(n+1)}{n+1+2t+(n-1)\mu}= B.
\]
For $T$ a similar caclulation shows
\begin{align*}
T&= \lim_{N\to\infty} \frac{ \int_{q_{0}}^{\tilde{q}_{3,N}} \delta(\mathbf{P},q)\; dq }{ \tilde{q}_{3,N}-q_{0}  } 
\\ &= \lim_{N\to\infty} \frac{ \int_{q_{0}}^{ q_{N+1}} \delta(\mathbf{P},q)\; dq - \int_{ \tilde{q}_{3,N} }^{ q_{N+1}} \delta(\mathbf{P},q)\; dq  }{ \tilde{q}_{3,N}-q_{0}  }\\ &=
\lim_{N\to\infty} \frac{(q_{N+1}-q_{0}) \underline{\delta}(\mathbf{P}) -(n-2)(q_{N+1}-\tilde{q}_{3,N})}{\tilde{q}_{3,N}-q_{0}}\\
&= 
\lim_{N\to\infty} \frac{q_{N+1} \underline{\delta}(\mathbf{P}) -(n-2)(q_{N+1}-\tilde{q}_{3,N})}{\tilde{q}_{3,N}}\\
&= n-2 + (\underline{\delta}(\mathbf{P})-n+2)\lim_{N\to\infty} \frac{q_{N+1}}{\tilde{q}_{3,N}}
\\ &= n-2+ (\underline{\delta}(\mathbf{P})-n+2)\frac{q_{1}}{\tilde{q}_{3}}=
n-2+A\frac{q_{1}}{\tilde{q}_{3}}\\
&= n-2+A\frac{(n+1)(1+t+(n-1)\mu)}{(1-t)(1+t+(n-1)\mu)+n(1+t+(n-1)\mu)^2}\\
&=n-2+A\frac{n+1}{n+1+(n-1)t+n(n-1)\mu}=C,
\end{align*}
as claimed, where we inserted for $q_{1}, \tilde{q}_{3}$ 
from \eqref{eq:q1}, \eqref{eq:tilq3} in the last line.
Finally, for $t=0$ the claim \eqref{eq:hd1}
is trivial by \eqref{eq:trivia}, and
we can extend the formula to $t=1$ by considering a limiting $n$-template,
compare with~\cite[Section~2]{ichcomm},
we omit details. 
The proof of Theorem~\ref{thm1} is complete.

\subsection{Extending the range of $\wos$} \label{ee}

We sketch how to alter the graphs in Figure~2 to get a prescribed value
for $\wos$ as in the interval of 
\eqref{eq:A1}. We keep $q_{0}, \tilde{q}_{1}$ and the graph from 
$\textbf{P}_{t,\mu}$
in $[q_{0}, \tilde{q}_{1}]$ unchanged. We alter the formulas
for $\tilde{q}_{2}, \tilde{q}_{3}, q_{1}$, still satisfying $\tilde{q}_{1}\leq \tilde{q}_{2}\leq \tilde{q}_{3}\leq q_{1}$,
and introduce
a new point $\tilde{r}$ between $\tilde{q}_{1}$ and $\tilde{q}_{2}$.
We let $P_{n}, P_{n+1}$ decay with slope $-(n-1)/2$ in $[\tilde{q}_{1},\tilde{r}]$ and then starting at $\tilde{r}$
we let $P_{n+1}$ rise with slope $+1$ and $P_{n}$ decay with slope $-n$
until it meets $P_{1}=\dots=P_{n-1}$. The construction in $[\tilde{q}_{2},q_{1}]$ remains basically as in $\textbf{P}_{t,\mu}$ 
in Figure~2.
For given
\[
\eta\in \left[0,(1-n)\frac{t+n\mu }{n+1+2t+(n-1)\mu}\right],
\]
appropriate choices of $\tilde{r},\tilde{q}_{2},\tilde{q}_{3}, q_{1}$ induce
an $n$-template $\textbf{P}_{t,\mu,\eta}$ satisfying \eqref{eq:6} apart from $\uv_{n+1}$ altered to $\uv_{n+1}= \eta$.
Thus by \eqref{eq:b} we obtain any $\wos$ as in \eqref{eq:A1} (remark: \eqref{eq:JAP}, \eqref{eq:JEP} are not preserved). 
We omit the calculations and only want to illustrate qualitatively the
graph in Figure~3 below. We omit metrical claims derived 
from Theorem~\ref{deep} as well. 

\begin{tikzpicture}

\draw[thick,->] (0,0) -- (13,0) node[anchor=west] {q};
\draw[thick,->] (0,-4) -- (0,6) node[anchor=south] {P(q)};

\draw[dotted,->] (0,0) -- (11,6) node[anchor=west] {$\overline{\varphi}_{n+1} =t$};

\draw[dotted,->] (0,0) -- (11,1.1) node[anchor=west] {$\theta$};

\draw[dotted,->] (0,0) -- (11,2.03) node[anchor=west] {$\overline{\varphi}_{n}$};

\draw[dotted,->] (0,0) -- (11.5,1.44) node[anchor=west] {$\underline{\varphi}_{n+1}= \eta$};

\draw[dotted,->] (0,0) -- (11,-3.55) node[anchor=west] {$\underline{\varphi}_{1}=\cdots=\underline{\varphi}_{n-1}=\mu $};

\draw[dotted,->] (0,0) -- (11,-0.4) node[anchor=west] {$\overline{\varphi}_{1}=\cdots=\overline{\varphi}_{n-1}$};

\draw[dotted,->] (0,0) -- (11,-1.5) node[anchor=west] {$\underline{\varphi}_{n}$};

 \draw (5.5,0.55) -- (6.04,1.12) ;



\draw (6.04,1.12) -- (5.5,3) node[anchor=south] {$P_{n+1}$};


\draw [line width=0.45mm] (6.65,-0.25) --  (5.5,-1.75)  node[anchor=north east] {$P_{1}=\cdots=P_{n-1}$};

\draw [line width=0.45mm] (7.85,-1.1) -- (9.5,-3.06) node[anchor=north east] {$P_{1}=\cdots=P_{n-1}$};

\draw (7.85,-1.1) -- (9.5,0.96) node[anchor=south] {$P_{n}$} ;

\draw [line width=0.7mm] (6.65,-0.25) -- (7.85,-1.1) ;

\draw  [line width=0.35mm] (6.04,1.12) -- (6.3,0.8) ;

\draw (6.65,-0.25) -- (6.3,0.8) ;

\draw (9.5,5.2) -- (6.3,0.8) ;

\fill[black] (5.5,0) circle (0.06cm) node[anchor=south] {$q_{0}$};

\fill[black] (6.04,0) circle (0.06cm) node[anchor=south] {$\tilde{q}_{1}$};

\fill[black] (6.65,0) circle (0.06cm) node[anchor=south] {$\tilde{q}_{2}$};

\fill[black] (6.3,0) circle (0.06cm) node[anchor=south] {$\tilde{r}$};

\fill[black] (7.85,0) circle (0.06cm) node[anchor=south] {$\tilde{q}_{3}$};

\fill[black] (9.5,0) circle (0.06cm) node[anchor=south] {$q_{1}$};

\node at (5.4,2) {$-n$};

\node at (8.15,2.8) {$+1$};

\node at (6.2,-1.4) {$+1$};

\node at (9.2,-2) {$-\frac{2}{n-1}$};

\node at (7.1,-1.1) {$-\frac{1}{n}$};

\node at (8.65,-0.6) {$+1$};

\node at (6.37,1.45) {\tiny$-\frac{n-1}{2}$};

\node at (5.35,0.8) {$+1$};

\node at (6.75,0.65) {$-n$};

\node at (9.5,5.6) {$P_{n+1}$};

\node at (5.5,-4.5) {Figure 3: Sketch period $\textbf{P}_{t,\mu,\eta}$ (extended case)};

\end{tikzpicture}

\subsection{Non-existence in Theorem~\ref{t1}}  \label{non1}

To establish the non-existence part of Theorem~\ref{t1}
means to show the following claim. 

\begin{theorem}  \label{t10}
	Let $n\geq 2$, $t\in [0,1]$ and $\mu_{0}=\mu_{0}(n,t)$ as in 
	Theorem~\ref{t1}. Then for $\mu\notin [\mu_{0},-t/n]$, the
	set $\Theta_{t,\mu}^{(n)}$ of $\ux\in\mathbb{R}^n$ that induces $\ov_{n+1}=t, \uv_{1}=\mu$
	and equality in \eqref{eq:3} is empty. In other words, no $\ux\in\mathbb{R}^n$
	induces
	\begin{equation} \label{eq:01}
	\ov_{n+1}=t, \qquad \uv_{n+1}=\sigma:= (1-n)\frac{t+n\mu }{n+1+2t+(n-1)\mu}, \qquad \uv_{1}=\mu.
	\end{equation}
\end{theorem}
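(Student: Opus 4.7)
The upper bound $\mu\le -t/n$ is immediate from Khintchine's transference inequality \eqref{eq:khin2} in the form $\uv_1\le -\ov_{n+1}/n$. The substance of the theorem is the lower bound $\mu\ge\mu_0(n,t)$, which I would prove by contradiction, combining identities forced by equality in \eqref{eq:3} with a geometric constraint extracted from any template realising the prescribed extrema.

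First, as noted in Section~\ref{se}, equality in \eqref{eq:1} forces equality in both splittings of \eqref{eq:ggmm}. Translating via \eqref{eq:a}--\eqref{eq:b}, the Schmidt--Summerer splitting becomes $\ov_1 = (t+n\mu)/(n+1+t-\mu)$, and the German splitting becomes $\sigma = -(n-1)\ov_1/(1+\ov_1)$; composing them recovers \eqref{eq:01}. Second, use Theorem~\ref{dop} to fix an $n$-template $\mathbf{P}$ for $\ux$, and pick a sequence $q_k\to\infty$ of local maxima of $P_{n+1}$ with $P_{n+1}(q_k)/q_k\to t$. Conditions (ii)--(iv) of Definition~\ref{defnt} together with \eqref{eq:sumv} force $P_{n+1}$ to decay at slope $-n$ immediately after $q_k$ while $P_1,\ldots,P_n$ all rise at slope $+1$, until $P_{n+1}$ first meets $P_n$ at some $r_k>q_k$. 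Setting $a_k := P_n(q_k)/q_k$ and $t_k := P_{n+1}(q_k)/q_k$, the same computation as in \eqref{eq:tilq1}--\eqref{eq:observed} yields $P_{n+1}(r_k)/r_k = (na_k+t_k)/(n+1+t_k-a_k)$, and since $\uv_{n+1}=\sigma$ is a liminf with $r_k\to\infty$, passage to a convergent subsequence gives
\[
\sigma \le \frac{na^\ast + t}{n+1+t-a^\ast}, \qquad a^\ast := \liminf_{k} a_k.
\]

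Third, a dual analysis at a sequence of local maxima of $P_1=\cdots=P_{n-1}$ (just before these meet $P_n$) controls $\ov_1$ in terms of the same template value $a^\ast$; substituting the prescribed value of $\ov_1$ from the first step yields a second inequality binding $a^\ast$ with $(t,\mu)$. Eliminating $a^\ast$ between the two bounds and plugging in \eqref{eq:01} for $\sigma$ collapses, after routine rearrangement, to a polynomial inequality in $(t,\mu)$ whose boundary case is precisely the quadratic \eqref{eq:muiod} defining $\mu_0$. Together with $\mu\le -t/n$ and the monotonicity in $\mu$ of the relevant branch, this pins down $\mu=\mu_0$ as the minimum admissible value, so $\mu<\mu_0$ contradicts the construction. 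The main obstacle lies in this dual step: between a peak of $P_{n+1}$ and a configuration realising $\ov_1$, the template may in principle traverse various switches involving $P_2,\ldots,P_{n-1}$, so one must either invoke compactness in the space of $n$-templates (cf.~\cite{roytop}) to reduce to a canonical periodic template of the type built in Sections~\ref{exi}--\ref{gc}, or rule the alternatives out by a short finite case analysis. Given the sharpness of $\mu_0$ demonstrated by the existence part, the resulting algebra is necessarily exact and must collapse to \eqref{eq:muiod}.
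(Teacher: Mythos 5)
Your reduction of the easy half ($\mu\le -t/n$ via \eqref{eq:khin2}) and your opening geometric step are fine: at a local maximum $q_k$ of $P_{n+1}$ the template conditions do force slope $-n$ for $P_{n+1}$ and slope $+1$ for the remaining components until $P_{n+1}$ meets $P_n$, and your formula $P_{n+1}(r_k)/r_k=(na_k+t_k)/(n+1+t_k-a_k)$ is the same computation as \eqref{eq:tilq1}--\eqref{eq:observed}. But note that this first inequality is vacuous at the point where it is needed: once the Step-1 type analysis (which the paper carries out via the bounded sum property, showing equality in \eqref{eq:3} forces $L_{n}(q_k)/q_k\to\theta=-(t+(n-1)\mu)$ and $L_j(q_k)/q_k\to\mu$ for $j\le n-1$ at the peaks) pins down $a^{\ast}=\theta$, your bound $\sigma\le (na^{\ast}+t)/(n+1+t-a^{\ast})$ holds with equality identically in $\mu$, so it cannot distinguish $\mu<\mu_0$ from $\mu\ge\mu_0$. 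All of the content of Theorem~\ref{t10} therefore sits in your ``dual step'', and that step is not proved: you do not show how $\ov_1$ (or anything else) is controlled by $a^{\ast}$ for an \emph{arbitrary} template realizing \eqref{eq:01}, and your description already presupposes structure ($P_1=\cdots=P_{n-1}$ at the relevant maxima) that is known only for the constructed periodic templates, not for a general $\ux$ with the prescribed exponents.

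The two fallbacks you offer do not close this gap. Compactness of the spectrum from~\cite{roytop} concerns the set of attainable exponent vectors; it does not reduce an arbitrary template with given $\uv_1,\uv_{n+1},\ov_{n+1}$ to (a finite perturbation of) the canonical periodic template of Sections~\ref{exi}--\ref{gc}, and no such reduction is available a priori. And the appeal to ``sharpness demonstrated by the existence part'' is circular: the construction shows $\mu=\mu_0$ is attainable, which says nothing about non-attainability of $\mu<\mu_0$ -- that non-attainability is precisely the statement to be proved. What the paper actually does after its Step 1 is a forward-propagation argument on the combined graph: starting from the forced peak configuration, it considers the next meeting point $r$ of $L_n$ and $L_{n+1}$ and splits into two cases according to whether $L_{n-1}$ meets $L_n$ before $r$. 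If not, $L_n$ and $L_{n+1}$ jointly decay with average slope $-(n-1)/2$, violating $\uv_{n+1}=\sigma$; if they do meet, the cap $\ov_{n+1}=t$ limits how far $L_{n+1}$ can rise, the bound \eqref{eq:ti} (coming from $L_{n-1}$ sitting at level $\mu q_0$ and slopes being at most $+1$) limits how far $L_n$ can rise, and for $\mu<\mu_0$ these force $L_{n+1}(q)/q$ to drop strictly below $\sigma$ before the next meeting, again contradicting $\uv_{n+1}=\sigma$; the boundary of this case analysis is exactly the quadratic \eqref{eq:muiod}. Some argument of this kind (or a genuine substitute for your dual step) is indispensable, so as it stands the proposal has a real gap at its central point.
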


We prove the theorem. 
For $\mu>-t/n$ we cannot even have $\ov_{n+1}=t, \uv_{1}=\mu$ due to the
reverse inequality $\uv_{1}\leq -\ov_{n+1}/n$ in \eqref{eq:khin2}.
It remains to contradict $\mu<\mu_{0}$ upon the assumptions
$\ov_{n+1}=t, \uv_{1}=\mu$ and equality in \eqref{eq:3} of the theorem. 
Keep in mind for the sequel the equivalence in the claims of
Theorem~\ref{t10}, i.e. upon
$\ov_{n+1}=t, \uv_{1}=\mu$, equality in \eqref{eq:3}
is equivalent to $\uv_{n+1}$ taking the value $\sigma$ in \eqref{eq:01}.

\underline{Step 1}: We show that
equality in \eqref{eq:3} implies that essentially the situation as in the
interval $[q_{0},\tilde{q}_{1}]\subseteq [q_{0},q_{1}]$ in 
Figure~2 (or Figure 1)
occurs for arbitrarily large $q_{0}$.
For this we basically rephrase an argument within the proof  of~\cite[Theorem~3.2]{ichnyj}: 
Since $\ov_{n+1}=t$, for any $\epsilon>0$ there are arbitrarily
large $q_{0}$ with $|L_{n+1}(q_{0})-tq_{0}| \leq \epsilon q_{0}$. 
Choose large $q_{0}$ with this property.
For simplicity of notation, 
we omit $\epsilon$ and use $o$ notation in the sequel,
so we write $L_{n+1}(q_{0})=tq_{0}+o(q_{0})$ and mean that
in fact we consider a sequence of $q_{0}$ values with this property that
tends to infinity. 
We may assume that
at $q_{0}$ there is a local maximum of $L_{n+1}$. Consider the next
point $q_{0}+\tilde{q}$ where $L_{n}, L_{n+1}$ meet to the right of $q_{0}$,
i.e. $\tilde{q}>0$ minimal so that $L_{n}(q_{0}+\tilde{q})= L_{n+1}(q_{0}+\tilde{q})$. By definition of $\uv_{n+1}$ clearly 
\[
L_{n+1}(q_{0}+\tilde{q})\geq (\uv_{n+1}-\epsilon)(q_{0}+\tilde{q})=(\sigma-o(1)) (q_{0}+\tilde{q}).
\]
Since $L_{n}$ has slope at most $1$, we infer
\begin{equation} \label{eq:kukuk}
L_{n}(q_{0}) \geq L_{n}(q_{0}+\tilde{q})- \tilde{q}
= L_{n+1}(q_{0}+\tilde{q})- \tilde{q}
\geq (\sigma-1)\tilde{q} +\sigma q_{0} - o(q_{0}+\tilde{q}).
\end{equation}
Together with the bounded sum property \eqref{eq:bsum}, we infer
\begin{equation}  \label{eq:weinf}
L_{1}(q_{0}) \leq - \frac{L_{n}(q_{0})+L_{n+1}(q_{0})}{n-1} + O(1) \leq
-\frac{(t+\sigma)q_{0} + (\sigma-1)\tilde{q} }{n-1} + o(q_{0}+\tilde{q}). 
\end{equation}
We estimate $\tilde{q}$. Since $L_{n+1}$ decays with slope $-n$ 
in $[q_{0},q_{0}+\tilde{q}]$ and $L_{n+1}(q_{0}+\tilde{q})/(q_{0}+\tilde{q})$
is at least $\sigma+o(1)$ by definition of $\uv_{n+1}=\sigma$, we have
\[
L_{n+1}(q_{0}+\tilde{q}) = L_{n+1}(q_{0}) -n \tilde{q} \geq 
(\sigma-o(1)) (q_{0}+\tilde{q}).  
\]
Inserting $L_{n+1}(q_{0})= tq_{0}+o(1)q_{0}$ we get
\begin{equation}  \label{eq:specht}
\tilde{q} \leq \left(\frac{ t-\sigma }{n+\sigma}+o(1)\right) q_{0}.
\end{equation}
Since $\sigma\leq 1$, by \eqref{eq:weinf} when dividing by $q_{0}$ we get 
 \[
 \frac{L_{1}(q_{0})}{q_{0}}  \leq
 -\frac{t+\sigma + (\sigma-1)\frac{ t-\sigma }{n+\sigma} }{n-1} + o(1).
 \]
 As we can assume $\uv_{1}\leq L_{1}(q_{0})/q_{0}+o(1)$,
 after some rearrangement when taking limits we may drop the $o(1)$ terms, 
 and find the corresponding inequality
 \[
 \uv_{1}\leq - \frac{ \ov_{n+1}+\uv_{n+1}+(\uv_{n+1}-1)
 	\frac{ \ov_{n+1}-\uv_{n+1} }{ n+\uv_{n+1} } }{n-1}
 \]
 to be equivalent to \eqref{eq:3}.
 This means that in case of equality
 in \eqref{eq:3}, there must be (asymptotic) equality in all inequalities
 above. So as $q_{0}$ as above tends to infinity, by \eqref{eq:kukuk}, \eqref{eq:specht} we must have
 \[
 L_{n+1}(q_{0}) = tq_{0}+o(q_{0}), \qquad L_{n}(q_{0})= \left((\sigma-1)\frac{ t-\sigma }{n+\sigma}+ \sigma\right) q_{0}+ o(q_{0})
 \]
 and further from equality in \eqref{eq:weinf} we infer
 \[
 L_{j}(q_{0})= -\frac{t+\sigma + (\sigma-1)\frac{ t-\sigma }{n+\sigma} }{n-1} q_{0}+ o(q_{0}), \qquad 1\leq j\leq n-1.
 \]
 With some calculation, we check that when dropping the remainder terms,
 the expression for 
 $L_{n}(q_{0})/q_{0}$ agrees with the value $\theta$ from
\eqref{eq:psit}, and $L_{1}(q_{0})/q_{0}$ with $\mu$. Upon
identifying $\tilde{q}+q=\tilde{q}_{1}$,
 this indeed verifies that essentially the combined graph
 in the interval $[q_{0},\tilde{q}_{1}]$ must look 
 like in Figure~2 
 from the construction.
 
\underline{Step 2}: we show that if $\mu<\mu_{0}$ we cannot extend the graph of Figure~2 from $[q_{0}, \tilde{q}_{1}]$ to the right of $\tilde{q}_{1}$ without violating the requirements of a combined graph, 
 thereby we get a contradiction.
 Let $r>\tilde{q}_{1}$ be the first coordinate of
 the next meeting point of $L_{n}, L_{n+1}$
 to the right of $\tilde{q}_{1}$, i.e. the smallest solution for
 $L_{n}(r)=L_{n+1}(r)$ with $r>\tilde{q}_{1}$. Write $I=[\tilde{q}_{1},r]$.
 Now we distinguish two cases.\\
 \underline{Case 1}: The functions $L_{n-1}$ and $L_{n}$ do not meet in $I$, i.e. $L_{n}(q)>L_{n-1}(q)$ for $q\in I$.
 Then it is clear from the theory of combined graphs/$n$-templates
 that, up to $o(q)$, the graph in $I$ must look as follows: 
 there is some switch point $u\in I$ so that 
 in $[\tilde{q}_{1},u]$ the function $L_{n+1}$ must rise with slope $1$
 and decay in $[u,r]$ with slope $-n$, whereas for some $v=u+o(u)$ 
 very close to $u$ the opposite happens for $L_{n}$, i.e. 
 $L_{n}$ decays with slope $-n$ in $[\tilde{q}_{1},v]$ and  
 increases with slope $+1$ in $[v,r]$. (The functions $L_{1},\ldots,L_{n-1}$ all rise
 with average slope $+1-o(1)$ in the entire interval $I$.)
 We justify this claim, but for brevity omit full
 rigorosity: First note that since $L_{n-1}$ and $L_{n}$ do not meet in the interior of $I$, any ''serious'' local minimum of $L_{n}(q)$ at some $q\in I$ induces a local maximum of $L_{n+1}(\ell)$ at some $\ell=q+O(1)$. This can be seen by passing to a close $n$-template 
 as in \eqref{eq:jaroy} and the convexity
 condition in Definition~\ref{defnt} and \eqref{eq:bsum}. Now since
 $L_{n}$ and $L_{n+1}$ move apart in a neighborhood to the right of $\tilde{q}_{1}$, the function $L_{n}$ must change slope to $+1$ somewhere
 in $I$, and by the above argument in proximity $L_{n+1}$
 must change slope to $-n$. So clearly there is at least one switch
 point in the interior of $I$ where $L_{n}, L_{n+1}$ exchange slopes as above.
 Assume there was another ''serious'' switch point $\tilde{\ell}$ in the interior of $I$ where $L_{n}$ changes slope. Then at $\tilde{\ell}$,
 $L_{n}$ starts to decay with slope $-n$
 and $L_{n+1}$ must start to rise with slope $+1$ by
 \eqref{eq:bsum} and we assume these slopes
 continue to the right on a subinterval of $I$ of
 substantial length. Since $\tilde{\ell}$ is in the interior of $I$,
 then in an associated close $n$-template satisfying \eqref{eq:jaroy},
 the function $P_{n+1}$ would have
 a local minimum which is not a local maximum of $P_{n}$. This contradicts
 the convexity condition of templates again. (Instead of passing to templates in the last step, we can 
 alternatively argue with the first two successive minima of the dual lattice point problem). This confirms our claim.
 
 In particular, the average slope of $L_{n}$ and $L_{n+1}$ in $I$ is $-(n-1)/2+o(1)<0$. Since clearly $L_{n+1}(q)\geq 0$ everywhere, 
 hence at $r$ we get $L_{n+1}(r)/r< L_{n+1}(\tilde{q}_{1})/\tilde{q}_{1}= \uv_{n+1}+o(1)$, contradiction to $\uv_{n+1}\leq L_{n+1}(r)/r-o(1)$
 unless $r-\tilde{q}_{1}$ is very small. However, it is clear that
 we may assume this is not the case. For example, we may pass
 to $n$-templates again, or start with $\varepsilon>0$ and 
 restrict to points $r$
 with $r>(1+\varepsilon)\tilde{q}_{1}$ and then use the above argument. 
 We omit the technical details.
 
 \underline{Case 2}:  The functions $L_{n-1}$ and $L_{n}$ meet in $I$. 
 Starting at its local minimum $\tilde{q}_{1}$ where it meets $L_{n}$, 
 the function $L_{n+1}$ rises with slope 
 $+1$. However, since $\ov_{n+1}=t$, this happens 
 at most until a point $y$ on the first axis 
 where $L_{n+1}(y)/y= t+o(1)$. 
 Then $L_{n+1}$ decays with slope $-n$ until it meets $L_{n}$ at
 $r>y>\tilde{q}_{1}$. Since $L_{n-1}$ and $L_{n}$ meet
 in $I$ and no slope can exceed $+1$, it is clear that 
 \begin{equation}  \label{eq:ti}
 L_{n}(r) \leq L_{n-1}(q_{0}) + (r-q_{0}).
 \end{equation}
 Recall that in the construction for $\mu=\mu_{0}$, the value
 $y$ was as large as possible (up to $o(1)$) 
 since $L_{n+1}$ indeed went up until $y$ with $L_{1}(y)/y=t$,
 and there was equality
 in \eqref{eq:ti}, since at the meeting point $\tilde{q}_{2}$ 
 of $L_{n-1}$ and $L_{n}$ the slope of $L_{n}$ changed from $-n$ to $+1$
 and remained $+1$ until it met $L_{n+1}$. 
 Further identifying our $y$ with $q_{1}$
 from that proof, for $\mu=\mu_{0}$ we had the 
 minimum possible value $L_{n+1}(r)/r= \sigma$ at $r$.
 So it is geometrically obvious that
if we start with $\mu<\mu_{0}$ (which also implies larger
values of $\sigma$ and $\theta$), even in the most disadvantageous 
 case of maximal $y$ and equality in \eqref{eq:ti}, 
 at the smallest point $z>y$ where $L_{n+1}(z)/z= \sigma$, we will have $L_{n+1}(z)\geq (1+\varepsilon) L_{n}(z)$ with 
 some $\varepsilon>0$ depending on $\mu, \mu_{0}$.
 We omit the explicit calculation. This means that $r>(1+\epsilon_{0})z$ 
 and $L_{n+1}$
 continues to decay with slope $-n$ in $[z,r]$ until it meets $L_{n}$ at $r$,
 for some $\epsilon_{0}>0$.
 Thus obviously $\varphi_{n+1}(r)=L_{n+1}(r)/r\leq (1-\epsilon_{1}) L_{n+1}(z)/z= (1-\epsilon_{1})\sigma$
 for some $\epsilon_{1}>0$. However, this contradicts the definition
 of $\sigma=\uv_{n+1}$.
 This completes the proof of Theorem~\ref{t10}.
 
 We finally observe that the equality in the dimension formulas
 \eqref{eq:hd1} for $t>0$ and $\mu=\mu_{0}$ follows from the proof above. 
 Our argument shows that
 then the combined graph must indeed be composed
 from consecutive periods as in Figure~1, up to
 $o(q)$ as $q\to\infty$. 
 Take the family $\mathscr{F}$ of $n$-templates
 with these properties, which is closed under
 finite perturbations in view of the error term. 
 Then it is not hard to see that the suprema of $\underline{\delta}(\textbf{Q}), \overline{\delta}(\textbf{Q})$ over $\textbf{Q}\in \mathscr{F}$ are attained for $\textbf{Q}=\textbf{P}_{t,\mu}$ as constructed (since $o(q)$ has
 a negligible effect in the limit 
 and by changing slopes of some $P_{j}$ locally in intervals where
 consecutive functions $P_{j}$ are glued, we may 
 only decrease the local contraction rate. We skip details).
 Application of the variational principle yields the claim.
 Note that we lose the case $t=0$, where
\eqref{eq:hd1} indeed fails as pointed out in Section~\ref{ela}, 
since then $q_{0}=q_{1}$ in our 
construction by \eqref{eq:q1}, so the period 
collapses to a singleton and we get no $n$-template.

\section{Proof of Theorem~\ref{t2}}  \label{se6}

By equivalence of Theorems~\ref{t2}, \ref{t3}
we again may just prove Theorem~\ref{t3}, and we show
the following more general existence claim that includes 
Theorem~\ref{thm200}.

\begin{theorem}  \label{thm2}
	Let $n\geq 2$ and $s\in [-n,0]$. Derive $\nu_{0}=g_{n}(s)$ 
	with $g_{n}$ as in \eqref{eq:hdef}.
	Then $\nu_{0}\geq -s/n$ and for every $\nu\in [-s/n,\nu_{0}]$
	there exists a non-empty set $\Sigma^{\ast}=\Sigma_{s,\nu}^{\ast (n)}$ consisting of
	$\ux=\ux_{s,\nu}\in \mathbb{R}^{n}$ whose associated
	quantities $\uv_{j},\ov_{j}$ satisfy
	\begin{equation}  \label{eq:tier}
	\uv_{1}=s , \qquad \ov_{1}=\uv_{2}= (1-n)\frac{s+n\nu }{n+1+2s+(n-1)\nu}, 
	\qquad \ov_{3}=\cdots=\ov_{n+1}= \nu,
	\end{equation}
	and
	\begin{equation} \label{eq:JAPh}
	\uv_{3}=\cdots=\uv_{n+1}= \frac{n\nu+s}{n+1+s-\nu},
	\end{equation}
	and
	\[
	\ov_{2}=  -\frac{1}{n} + \frac{n+1}{n}\cdot \frac{1-s}{1+n+(n-1)s+n(n-1)\nu}.
	\]
	Every $\ux\in \Sigma_{s,\nu}^{\ast (n)}$ induces equality in \eqref{eq:4}.
	The dimensions of $\Sigma_{s,\nu}^{\ast (n)}$ are bounded
	as in \eqref{eq:hd2}, with equality if $s<0$ and $\nu=\nu_{0}$.
\end{theorem}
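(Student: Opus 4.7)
The plan is to mirror Section~\ref{se5} closely, exploiting the structural symmetry between inequalities \eqref{eq:3} and \eqref{eq:4} under the swap $(t,\mu,\theta)\leftrightarrow(-s,\nu,\theta')$ together with the index reversal $j\leftrightarrow n+2-j$. Since Theorems~\ref{t2} and \ref{t3} are equivalent via \eqref{eq:a}, \eqref{eq:b}, it suffices to construct, for each admissible $(s,\nu)$ with $s\in[-n,0]$ and $\nu\in[-s/n,\nu_{0}]$, an $n$-template $\mathbf{P}=\mathbf{P}_{s,\nu}$ exhibiting the asymptotic slopes prescribed in \eqref{eq:tier}, \eqref{eq:JAPh} together with the stated value of $\ov_{2}$. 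Theorem~\ref{dop} then produces the required $\ux\in\mathbb{R}^{n}$, equality in \eqref{eq:4} is automatic once \eqref{eq:tier} is read off, and Theorem~\ref{deep} will furnish the metrical bounds \eqref{eq:hd2} once the contraction rates are computed. A preliminary ingredient is the dual of Proposition~\ref{pq}, which follows from the same quartic-factorisation trick applied to the quadratic identity $(1+(n-1)\nu_{0}-s)^{2}=(1-\nu_{0})(1+s)$ defining $\nu_{0}$; it supplies the inequalities needed to order the switch points below.

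For the explicit construction, after a preperiod arranged as in Section~5.2 (with indices reversed) one arrives at some $q_{0}>0$ with
\[
\frac{P_{1}(q_{0})}{q_{0}}=s,\qquad \frac{P_{2}(q_{0})}{q_{0}}=\theta',\qquad \frac{P_{3}(q_{0})}{q_{0}}=\cdots=\frac{P_{n+1}(q_{0})}{q_{0}}=\nu,
\]
where $\theta'=-(s+(n-1)\nu)$ is forced by \eqref{eq:sumv}. The period $[q_{0},q_{1}]$ is then the index-reversed analogue of Figure~2: $P_{1}$ plays the role of $P_{n+1}$ in the primal, $P_{2}$ plays the role of $P_{n}$, and the top block $P_{3}=\cdots=P_{n+1}$ plays the role of the bottom block $P_{1}=\cdots=P_{n-1}$. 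Three switch points $\tilde q_{1}\le \tilde q_{2}\le \tilde q_{3}$ appear, corresponding respectively to $P_{1},P_{2}$ meeting and exchanging slopes, $P_{2}$ meeting the top block, and a final split at which $n-1$ of the top functions acquire a common slope mirroring the slope $-2/(n-1)$ from Figure~2. Their coordinates are determined by the dual analogues of \eqref{eq:tilq1}, \eqref{eq:rqt}, \eqref{eq:q1}, \eqref{eq:tilq3}, and the ordering $\tilde q_{2}\le \tilde q_{3}\le q_{1}$ holds precisely when $\nu\le \nu_{0}$, as one verifies from the dual of Proposition~\ref{pq}. Periodic rescaling by the factor $q_{1}/q_{0}$ then extends $\mathbf{P}$ to $[0,\infty)$.

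With the template in place, the asymptotic slopes \eqref{eq:tier}, \eqref{eq:JAPh} and the formula for $\ov_{2}$ are read off at the switch points exactly as in Section~5.5, by checking that the extrema of $P_{j}(q)/q$ over the period occur at the prescribed coordinates; equality in \eqref{eq:4} then follows by direct substitution into the equivalent form. For the dimensional lower bounds I would evaluate the local contraction rate $\delta(\mathbf{P},q)$ on each of the three subintervals of the period, integrate over $[q_{0},q_{1}]$ to recover $\underline{\delta}(\mathbf{P})\ge D$, and run the same limit arguments as in Section~5.6 on the intervals $[q_{0},\tilde q_{1,N}]$ and $[q_{0},\tilde q_{3,N}]$ to recover $\overline{\delta}(\mathbf{P})\ge \max\{E,F\}$ with $D,E,F$ as in \eqref{eq:D}, \eqref{eq:ee}. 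The sharpness claim for $s<0,\nu=\nu_{0}$ is obtained by a dual of the analysis of Section~\ref{non1}: equality in \eqref{eq:4} with these extremal parameters forces the combined graph $\mathbf{L}_{\ux}$ to lie within $o(q)$ of $\mathbf{P}_{s,\nu_{0}}$, and the suprema of $\underline{\delta},\overline{\delta}$ over the finite-perturbation hull are then attained by the constructed template itself.

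The main obstacle will be verifying that the slopes chosen on each subinterval of the period are genuinely admissible under Definition~\ref{defnt}; this is most delicate for the convexity condition $(iv)$ on the partial sums $F_{j}$, and the concrete slope values differ substantially from those in \eqref{eq:dieslopes} (reflecting the dimension drop between Theorems~\ref{thm100} and \ref{thm200}). A secondary technical point is the extended-range claim \eqref{eq:A2}, which I would handle by a Figure~3-type twist of the construction inserting an extra switch point between $\tilde q_{1}$ and $\tilde q_{2}$ at which $P_{1},P_{2}$ temporarily share a common slope; the calculation is tedious but structurally parallel to Section~\ref{ee}. Once these ingredients are in place, the remainder of the argument — including the passage to the boundary cases $s=-n$ and $\nu=-s/n$ via limiting templates, as in the final paragraph of Section~\ref{dimen} — follows Section~\ref{se5} essentially verbatim.
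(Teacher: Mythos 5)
Your overall strategy is the paper's: build a periodic dual $n$-template realizing the prescribed limits, invoke Theorem~\ref{dop} to get $\ux$, compute contraction rates and apply Theorem~\ref{deep} for \eqref{eq:hd2}, and dualize the non-existence analysis to get sharpness at $\nu=\nu_{0}$. However, the central object — the period of $\mathbf{P}_{s,\nu}$ — is mis-specified, and the error is structural, not cosmetic. Starting from the data \eqref{eq:hir}, i.e. $P_{1}(q_{0})/q_{0}=s$, $P_{2}(q_{0})/q_{0}=\vartheta=-(s+(n-1)\nu)$, $P_{3}(q_{0})/q_{0}=\cdots=P_{n+1}(q_{0})/q_{0}=\nu$, the axioms of Definition~\ref{defnt} force the slopes just to the right of $q_{0}$: $F_{1}=P_{1}$ must have slope in $\{1,-n\}$, and slope $-n$ would drag $P_{1}(q)/q$ below $s$ at the same relative position of every period, contradicting $\uv_{1}=s$; $F_{2}=P_{1}+P_{2}$ must have slope in $\{2,1-n\}$, and the choice $1-n$ (i.e. $P_{2}'=-n$) forces the glued top block to rise with slope $+1$, pushing $\ov_{3},\dots,\ov_{n+1}$ above $\nu$ and also breaking convexity of $F_{2}$ over a full period. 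So the only admissible start is $P_{1}'=P_{2}'=+1$ with the top block decaying at $-2/(n-1)$, and the \emph{first} switch point after $q_{0}$ is necessarily the rising $P_{2}$ meeting the decaying top block — not, as you assert, a meeting of $P_{1}$ and $P_{2}$. If one nonetheless forces your order, the $P_{1}$--$P_{2}$ meeting ratio is $(n-1)(s-\nu)/(n+1-2s-(n-1)\nu)$, which is not the required $\ov_{1}=\uv_{2}=(1-n)(s+n\nu)/(n+1+2s+(n-1)\nu)$ (for $n=2$, $s=-1$, $\nu=0.7$ one gets $\approx-0.395$ versus $\approx-0.235$). The correct dual period runs through the analogous events in the \emph{opposite} order to Figure~2: first $P_{2}$ meets the top block, then (for $\nu<\nu_{0}$) a glued interval of common slope $-1/n$, then $P_{2}$ drops with slope $-n$ until it meets $P_{1}$, and only in the final short interval does $P_{1}$ decay back to ratio $s$; this reversal is precisely why the paper builds the period backwards from $q_{1}$. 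Accordingly the local contraction rates on the three pieces are $n,1,0$ (not a shifted copy of \eqref{eq:dieslopes}), and the packing-dimension limits must be taken along $[q_{0},\tilde q_{2,N}]$ and $[q_{0},\tilde q_{1,N}]$; your bookkeeping on $[q_{0},\tilde q_{3,N}]$ would not produce $E,F$ of \eqref{eq:ee}.

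A secondary but telling slip: the defining quadratic of $\nu_{0}=g_{n}(s)$ is $(1+s+(n-1)\nu_{0})^{2}=(1-s)(1-\nu_{0})$, i.e. \eqref{eq:rr} with $(t,\mu)$ replaced by $(s,\nu)$, because the formal symmetry between \eqref{eq:3} and \eqref{eq:4} is the swap $(\ov_{n+1},\uv_{1},\uv_{n+1})\leftrightarrow(\uv_{1},\ov_{n+1},\ov_{1})$, hence $(t,\mu)\leftrightarrow(s,\nu)$ and not $(-s,\nu)$; your identity $(1+(n-1)\nu_{0}-s)^{2}=(1-\nu_{0})(1+s)$ defines a different quantity, so the dual of Proposition~\ref{pq} as you state it would order the switch points incorrectly. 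Once the period is set up as above (with $\tilde q_{3}$ determined so that the data \eqref{eq:hir} recur at $q_{1}$, and $\tilde q_{3}\geq q_{0}$ exactly reflecting $\nu\leq\nu_{0}$), the rest of your outline — reading off \eqref{eq:tier}, \eqref{eq:JAPh} and $\ov_{2}$ at the switch points, the Figure~3-type twist for \eqref{eq:A2}, the limiting templates at the boundary, and the dual non-existence argument giving equality in \eqref{eq:hd2} for $s<0$, $\nu=\nu_{0}$ — does follow the paper's Section~\ref{se6}.
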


The identity $\uv_{n+1}=(n\nu+s)/(n+1+s-\nu)$ in \eqref{eq:JAPh}
agrees with \eqref{eq:istg2}
when using \eqref{eq:b}, so again it is necessary in our framework.
We again have $\Sigma_{s,\nu}^{\ast (n)} \subseteq \Sigma_{s,\nu}^{(n)}$
with $\Sigma_{s,\nu}^{(n)}$ from Theorem~\ref{t3}.
Moreover, if $\nu=\nu_{0}$, then $\ov_{2}=\uv_{3}=\cdots=\uv_{n+1}$
and additional equality in \eqref{eq:4} induces all values $\uv_{j}, \ov_{j}$ 
as in the theorem.
Again we construct suitable $n$-templates $\textbf{P}_{s,\nu}$ 
in order to apply Theorem~\ref{dop}.
The construction is dual in some sense. We omit certain computations
that are similar to the proof of Theorem~\ref{thm1}. We start with the
dual version of Proposition~\ref{pq}.

\begin{proposition}  \label{qp}
	For any $s,\nu_{0}$ as in Theorem~\ref{thm2} we have
	\[
	 -\frac{2}{n-1} s\geq \nu_{0} \geq -\frac{s^2 + (2n+1)s}{n^2-s} \geq -\frac{s}{n}.
	\]
\end{proposition}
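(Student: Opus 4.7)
My plan is to mirror the proof of Proposition~\ref{pq} almost verbatim, exploiting that $\nu_0 = g_n(s)$ is characterized by the same quadratic identity as $\mu_0$, namely
\[
\widetilde{F}_n(s, \nu_0) := (1 + s + (n-1)\nu_0)^2 - (s-1)(\nu_0 - 1) = 0,
\]
which is the direct analog of \eqref{eq:reflex} under the formal substitution $t \rightsquigarrow s$, $\mu_0 \rightsquigarrow \nu_0$, with $\nu_0$ the larger of the two roots of the upward parabola $\widetilde{F}_n(s,\cdot)$. Consequently, for any candidate function $\psi(s)$, verifying $\nu_0 \gtreqless \psi(s)$ reduces to determining the sign of $\widetilde{F}_n(s, \psi(s))$.

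First I would dispose of the outer inequalities $-\tfrac{2s}{n-1} \geq \nu_0$ and $A_n(s) \geq -s/n$, abbreviating $A_n(s) = -(s^2 + (2n+1)s)/(n^2 - s)$. The second is purely algebraic and follows from the factorization
\[
A_n(s) - \bigl(-\tfrac{s}{n}\bigr) = \frac{-s(n+1)(s+n)}{n(n^2 - s)},
\]
whose numerator and denominator are manifestly non-negative on $s \in [-n,0]$. The first is handled by substituting $\nu = -\tfrac{2s}{n-1}$ into $\widetilde{F}_n$ and checking non-negativity of the resulting quadratic polynomial in $s$ on $[-n,0]$, which by the parabola argument above is equivalent to $\nu_0 \leq -\tfrac{2s}{n-1}$.

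The middle inequality is the main obstacle and I would handle it by the same template as in Proposition~\ref{pq}. Substitute $\nu = A_n(s)$ into $\widetilde{F}_n$ and clear the denominator $(n^2 - s)^2$ to obtain a polynomial
\[
\widetilde{G}_n(s) \;:=\; (n^2 - s)^2\, \widetilde{F}_n\bigl(s, A_n(s)\bigr)
\]
of degree four in $s$. A direct expansion (parallel to the one verifying $G_n(t)$ in Proposition~\ref{pq}) reveals that its zero set is exactly $\{0,\,1,\,-n\}$ with $-n$ a double root, so $\widetilde{G}_n$ does not vanish on the open interval $(-n, 0)$. By the continuity argument used in~\ref{pq}, a sign change of $\nu_0(s) - A_n(s)$ in $(-n,0)$ would force a zero of $\widetilde{G}_n$ there; hence $\nu_0 - A_n(s)$ has constant sign throughout $(-n,0)$. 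The direction of the inequality is then pinned down by pairing endpoint data $\nu_0(0)=A_n(0)=0$, $\nu_0(-n)=A_n(-n)=1$ with an implicit-differentiation calculation: from $\widetilde{F}_n(s,\nu_0(s))\equiv 0$ one gets $\nu_0'(0) = -3/(2n-1)$, to be compared with $A_n'(0) = -(2n+1)/n^2$, which resolves the orientation on the whole interval.

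The principal bookkeeping burden is the quartic expansion identifying the zero set of $\widetilde{G}_n$; once those roots are in hand, the continuity-plus-endpoint-sign argument is essentially forced. Everything else is either a copy of~\ref{pq} or a one-line factorization, so the proof reduces to executing these algebraic computations carefully.
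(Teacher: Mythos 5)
Your template -- characterizing $\nu_0=g_n(s)$ via the quadratic $F_n(s,\nu_0)=(1+s+(n-1)\nu_0)^2-(s-1)(\nu_0-1)=0$, locating the roots of the quartic $(n^2-s)^2F_n(s,A_n(s))$ at $0,1,-n$, and transporting the sign by continuity plus an endpoint/derivative check -- is exactly the argument the paper intends (its own proof is omitted with a pointer to Proposition~\ref{pq}). But the decisive step of your plan fails: the orientation check you defer comes out the other way. With $A_n(s)=-(s^2+(2n+1)s)/(n^2-s)$ your values $\nu_0'(0)=-3/(2n-1)$ and $A_n'(0)=-(2n+1)/n^2$ are correct, and since $(2n+1)(2n-1)=4n^2-1>3n^2$ we get $A_n'(0)<\nu_0'(0)<0$; hence for $s<0$ near $0$ one has $A_n(s)>\nu_0(s)$, the \emph{opposite} of the middle inequality you set out to prove. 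Indeed that middle inequality is false as printed: for $n=2$, $s=-1/10$ one computes $\nu_0=g_2(-1/10)\approx 0.097$ while $A_2(-1/10)=0.49/4.1\approx 0.120$. Combining this with your (correct) constant-sign argument on $(-n,0)$, what your machinery actually proves is the chain with the middle two terms interchanged,
\[
-\frac{2}{n-1}s\;\geq\;-\frac{s^2+(2n+1)s}{n^2-s}\;\geq\;\nu_0\;\geq\;-\frac{s}{n},
\]
which is the true mirror of Proposition~\ref{pq} (the orderings flip when the argument of $g_n$ changes sign). This corrected version is also the one the paper needs: by \eqref{eq:viech4} and $q_0/q_1=(1+s+(n-1)\nu)/(1-s)$, the requirement $\tilde q_2\geq q_0$ in Section~\ref{cons} is equivalent to $\nu\leq A_n(s)$, so the construction uses $\nu\leq\nu_0\leq A_n(s)$, not $\nu_0\geq A_n(s)$; moreover $A_n(s)\leq 1$ (equivalent to $(s+n)^2\geq 0$) is what yields $\nu_0\leq 1$ in the preperiod. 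So no proof of the statement as displayed can succeed; executed honestly, your computation detects the misprint rather than confirming the claim.

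A secondary, fixable point: for the outer inequality $-\tfrac{2}{n-1}s\geq\nu_0$ you assert that non-negativity of $F_n\bigl(s,-\tfrac{2s}{n-1}\bigr)=(1-s)\cdot\tfrac{-(n+1)s}{n-1}$ is \emph{equivalent} to the claim "by the parabola argument". It is not: non-negativity only places $-2s/(n-1)$ outside the open interval between the two roots of $F_n(s,\cdot)$, and you must still exclude the branch below the smaller root. This is easily repaired by the same continuity-plus-endpoint device you use elsewhere (strict positivity of $F_n(s,-2s/(n-1))$ on $(-n,0)$, together with $\nu_0(-n)=1<2n/(n-1)$), but as written the equivalence claim is a gap.
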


We skip the proof as it works very similar as in Proposition~\ref{pq}.
We explain how we construct $n$-templates $\textbf{P}_{s,\nu}$ with the desired properties.
We may again assume strict inequalities 
$-n<s<0$ by compactness of the spectrum.

\subsection{Preperiod of $\textbf{P}_{s,\nu}$}

Similarly to Theorem~\ref{thm1}, here for some $q_{0}>0$ we want
\begin{equation}  \label{eq:hir}
\frac{P_{3}(q_{0})}{q_{0}} =\cdots= \frac{P_{n+1}(q_{0})}{q_{0}}=\nu, \qquad \frac{P_{2}(q_{0})}{q_{0}} = \vartheta, \qquad \frac{P_{1}(q_{0})}{q_{0}} =s,
\end{equation}
where again $\vartheta$ is determined from $s, \nu$ 
in view of \eqref{eq:sumv} via
\begin{equation}  \label{eq:tv}
\vartheta= -(s+(n-1)\nu).
\end{equation}
Moreover, at $q_{0}$ the functions $P_{1}, P_{2}$ rise with slope $+1$
while $P_{3},\ldots,P_{n+1}$ decay with slope $-2/(n-1)$.
By Proposition~\ref{qp}
we again check $-n\leq s\le \vartheta\leq -s/n\leq  \nu\leq \nu_{0}\leq 1$ for any $s\in[-n,0]$. 
So the slopes are well-defined and the ordering \eqref{eq:hir}
is correct. Moreover, $s=-n$ is equivalent
to $\vartheta=\nu=1$. 

By choice of $\vartheta$, for $\nu=\nu_{0}=g_{n}(s)$ we again have 
\[
\vartheta^2 - 2\vartheta-\nu s+\nu+s= (\vartheta-1)^2 - (\nu-1)(s-1)=0.
\]
To obtain \eqref{eq:hir}, starting at $q=0$ 
we let $P_{1}$ decay with slope $-n$ up to a switch
point $q^{\prime}\in(0,q_{0}]$ where it starts increasing with slope $+1$ until $q_{0}$.
Hereby $q^{\prime}$ is determined via the property $P_{1}(q_{0})= sq_{0}$,
giving $q^{\prime}=((1-s)/(n+1))q_{0}$.
In $[0,q^{\prime}]$ we let all $P_{2},\ldots,P_{n+1}$ rise with slope $+1$.
At the switch point $q^{\prime}$ we start letting $P_{2}$ decay with slope 
$-n$ up to some point $q^{\prime\prime}\geq q^{\prime}$ while the other functions
all rise with slope $+1$ in $[q^{\prime}, q^{\prime\prime}]$.
Then starting from $q^{\prime\prime}$ we let $P_{2}$ rise with slope $+1$ 
so that $P_{3},\ldots,P_{n+1}$ have slopes $-2/(n-1)$ in $[q^{\prime\prime},q_{0}]$. A suitable choice of $q^{\prime\prime}$
will lead to $P_{2}(q_{0})/q_{0} = \vartheta$, 
and by $P_{1}(q_{0})/ q_{0}=s$, the 
vanishing sum property \eqref{eq:sumv} and $P_{3}(q_{0})=\cdots=P_{n+1}(q_{0})$, actually all conditions in \eqref{eq:hir} are implied. 
Concretely $q^{\prime\prime}=((2-s-\vartheta)/(n+1))q_{0}$ 
is derived
from
\[
q^{\prime} -  n(q^{\prime\prime}-q^{\prime}) + (q_{0}-q^{\prime\prime})=
\vartheta q_{0}
\]
and inserting for $q^{\prime}$, and
indeed
$q^{\prime}\leq q^{\prime\prime}$ since this is equivalent to
$\vartheta\leq 1$ which is trivial.

\subsection{Period of $\textbf{P}_{s,\nu}$ and conclusion}  \label{cons}

It is convenient to give a reverse construction of the period,
i.e. start from $q_{1}>q_{0}$ where the properties
\begin{equation}  \label{eq:hir0}
\frac{P_{3}(q_{1})}{q_{1}} =\cdots= \frac{P_{n+1}(q_{1})}{q_{1}}=\nu, \qquad \frac{P_{2}(q_{1})}{q_{1}} = \vartheta, \qquad \frac{P_{1}(q_{1})}{q_{1}} =s,
\end{equation}
are satisfied and calculate back to derive the same
conditions \eqref{eq:hir} at $q_{0}$, using our choice of $\vartheta$.
It is clear that ultimately we can change the direction 
back to positive and repeat the period $[q_{0},q_{1}]$, 
blown up by the constant factor $q_{1}/q_{0}$ in each step, ad infinitum
again. 

Again first consider the special case $\nu=\nu_{0}=g_{n}(s)$. Let
$q_{0}, q_{1}$ be related via
\begin{equation}  \label{eq:tv2}
q_{1}= \frac{s-1}{\vartheta-1} q_{0}=
\frac{ \vartheta-1 }{ \nu-1 } q_{0}.
\end{equation}
The case $\nu=\vartheta=1$ is equivalent to $s=-n$ which we excluded.
We determine $\tilde{q}_{1}<q_{1}$ 
from intersecting the continuation of $P_{1}$ to the left 
decreasing
with slope $-n$ with the likewise continuation of
$P_{2}$ increasing with slope $+1$.
From equating $P_{1}$ and $P_{2}$
at $\tilde{q}_{1}$ we get
\[
P_{1}(\tilde{q}_{1})=sq_{1}+ n(q_{1}-\tilde{q}_{1}) = \vartheta q_{1} - (q_{1}-\tilde{q}_{1})=P_{2}(\tilde{q}_{1}).
\]
After some calculation and using \eqref{eq:hir}, \eqref{eq:tv}
we derive
\begin{equation} \label{eq:viech}
\tilde{q}_{1}= \frac{n+1+2s+(n-1)\nu}{n+1}\cdot q_{1},\quad
\frac{P_{1}(\tilde{q}_{1})}{\tilde{q}_{1}}= \frac{P_{2}(\tilde{q}_{1})}{\tilde{q}_{1}}= (1-n)\frac{s+n\nu }{n+1+2s+(n-1)\nu},
\end{equation}
and we recognize the right hand side as the value from \eqref{eq:tier}. When
moving to the left from $\tilde{q}_{1}$, we let $P_{1}, P_{2}$ exchange slopes 
at $\tilde{q}_{1}$ up to a point
$\tilde{q}_{2}< \tilde{q}_{1}$
where $P_{2}$ intersects $P_{3}=\cdots=P_{n+1}$ that rise with slope
$+1$ in $[\tilde{q}_{2},q_{1}]$.
From
\[
P_{3}(\tilde{q}_{2})=\cdots=P_{n+1}(\tilde{q}_{2})= \nu q_{1} - (q_{1}-\tilde{q}_{2})
=sq_{1}+ n(q_{1}-\tilde{q}_{2})= P_{2}(\tilde{q}_{2})
\]
we calculate 
\begin{equation}  \label{eq:viech4}
\tilde{q}_{2}= \frac{n+1+s-\nu}{n+1} q_{1}, \qquad \frac{P_{2}(\tilde{q}_{2})}{\tilde{q}_{2}}= \frac{n\nu+s}{n+1+s-\nu}.
\end{equation}
We further check by Proposition~\ref{qp}, and it follows from the construction below, that $\tilde{q}_{2}\geq q_{0}$.
At the switch point $\tilde{q}_{2}$, when going to the left
we change the slope of $P_{2}$ to $+1$ and the slopes of 
$P_{3},\ldots,P_{n+1}$ according to \eqref{eq:sumv} to $-2/(n-1)$,
recalling $P_{1}$ still has slope $+1$. Since
$P_{1}$ rises with slope $+1$ left of $\tilde{q}_{1}$, 
at some point $q^{\ast}<\tilde{q}_{1}$ we will
have $P_{1}(q^{\ast})=sq^{\ast}$. We will check that $q^{\ast}=q_{0}\leq \tilde{q}_{2}$ and that keeping the slopes
in $[q^{\ast}, \tilde{q}_{2}]$ equations \eqref{eq:hir} hold.
From equating
\[
P_{1}(q^{\ast})= sq^{\ast} =
\vartheta q_{1} - (q_{1}-q^{\ast})
\]
we indeed readily check that $q^{\ast}=q_{0}$ is the value as in \eqref{eq:tv2}.
Moreover, from \eqref{eq:tv2} we verify
\[
P_{2}(q_{0})= \nu q_{1} - (q_{1}-q_{0}) = \vartheta q_{0}.
\]
Since clearly $P_{3}(q_{0})=\cdots=P_{n+1}(q_{1})$ from
\eqref{eq:sumv} we conclude the remaining claims of \eqref{eq:hir}, 
proving our assertion.

Finally, for general $\nu\in [-s/n,\nu_{0}]$, we again split the interval
$[q_{0},\tilde{q}_{2}]$ into $[q_{0},\tilde{q}_{3}]$ and $[\tilde{q}_{3},\tilde{q}_{2}]$ for some 
$q_{0}\leq \tilde{q}_{3}\leq \tilde{q}_{2}$
and let $P_{2},\ldots,P_{n+1}$ all decay with slope $-1/n$ in 
$[\tilde{q}_{3},\tilde{q}_{2}]$, and in $[q_{0},\tilde{q}_{3}]$
we take the slopes $-2/(n-1)$ for $P_{3},\ldots,P_{n+1}$ 
and $+1$ for $P_{2}$, i.e. as in the interval $[q_{0},\tilde{q}_{2}]$ 
when $\nu=\nu_{0}$.
The value $\tilde{q}_{3}$ is again determined so
that the imposed assumptions \eqref{eq:hir} at $q_{0}$ are met. 
Similar to Theorem~\ref{thm1} we get
\[
\tilde{q}_{3} = \frac{1+s+(n-1)\nu}{n+1}\cdot \frac{1+n+(n-1)(s+n\nu)}{1-s} \cdot q_{1}. 
\]
We omit details of the calculation. This finishes the period and gives
rise to an $n$-template.

\begin{tikzpicture}

\draw[thick,->] (0,0) -- (14.2,0) node[anchor=west] {q};
\draw[thick,->] (0,-5) -- (0,2.5) node[anchor=south] {P(q)};


\draw[dotted,->] (0,0) -- (11,1.3) node[anchor=west] {$\overline{\varphi}_{3}=\cdots=\overline{\varphi}_{n+1}=\nu $};

\draw[dotted,->] (0,0) -- (11,-5) node[anchor=west] {$\underline{\varphi}_{1}=s $};

\draw[dotted,->] (0,0) -- (11,-1.5) node[anchor=west] {$\overline{\varphi}_{1}=\underline{\varphi}_{2}=\gamma $};

\draw[dotted,->] (0,0) -- (11,-.8) node[anchor=west] {$\vartheta $};

\draw[dotted,->] (0,0) -- (13.5,0.73) node[anchor=west] {$\overline{\varphi}_{2}$};

\draw[dotted,->] (0,0) -- (11,0.3) node[anchor=west] {$\underline{\varphi}_{3}=\cdots=\underline{\varphi}_{n+1}$};

\draw (9.8,-1.35) -- (10.5,-4.77) node[anchor=north] {$P_{1}$};

\draw (10.5,-0.75) -- (7.45,-3.4) node[anchor=north] {$P_{1}$};

\draw (8.66,0.45) -- (7.45,-0.55) node[anchor=north] {$P_{2}$};

\draw (9.8,-1.35) -- (9.45,0.3);


\draw (8.66,0.45) -- (7.45,0.88);

\draw [line width=0.4mm] (8.66,0.45) -- (7.45,0.88)  node[anchor=south east] {$P_{3}=\cdots=P_{n+1}$};

\draw [line width=0.4mm] (9.45,0.3) -- (10.5,1.25) node[anchor=south east] {$P_{3}=\cdots=P_{n+1}$};

\draw [line width=0.7mm] (9.45,0.3) -- (8.66,0.45);

\node at (10.6,-1.1) {$P_{2}$};

\node at (8.97,0.65) {\tiny$-\frac{1}{n}$};

\node at (8.18,0.85) {\tiny$-\frac{2}{n-1}$};

\node at (9.7,0.88) {\small$+1$};

\node at (8.1,-0.3) {\small$+1$};

\node at (8.5,-2.9) {$+1$};

\node at (10.5,-2.9) {$-n$};







\fill[black] (7.45,0) circle (0.06cm) node[anchor=north] {$q_{0}$};

\fill[black] (9.845,0) circle (0.06cm) node[anchor=north] {$\tilde{q}_{1}$};

\fill[black] (9.45,0) circle (0.06cm) node[anchor=north] {$\tilde{q}_{2}$};

\fill[black] (8.66,0) circle (0.06cm) node[anchor=north] {$\tilde{q}_{3}$};

\fill[black] (10.5,0) circle (0.06cm) node[anchor=north] {$q_{1}$};

\node at (5.5,-5.5) {Figure 4: Sketch period of $\textbf{P}_{s,\nu}$ };

\end{tikzpicture}

We again easily verify the claimed
upper and lower limits $\uv_{j}, \ov_{j}$ of the theorem.
Inserting $\uv_{1}=s, \overline{\varphi}_{n+1}=\nu$ and for $\overline{\varphi}_{1}$ from \eqref{eq:tier},
a calculation verifies equality in \eqref{eq:4}. 

Extending the interval for $\wo$ as in \eqref{eq:A2} works similarly
as in Section~\ref{ee} by
splitting the interval $[\tilde{q}_{2}, \tilde{q}_{1}]$ suitably
to attain given $\ov_{1}$ within a corresponding range,
we skip details.

To estimate the Hausdorff and packing dimensions in Theorem~\ref{thm2},
we evaluate the local contraction rates of 
$\textbf{P}= \textbf{P}_{s,\nu}$ within the period interval
$[q_{0},q_{1}]$ as
\[
\delta(\mathbf{P},q)=  \begin{cases}
n, & \text{if}\ q\in [q_{0}, \tilde{q}_{2}], \\
1, & \text{if}\ q\in [\tilde{q}_{2}, \tilde{q}_{1}], \\ 
0, & \text{if}\ q\in [\tilde{q}_{1}, q_{1}]. \\
\end{cases}
\]
By a similar argument as in Theorem~\ref{thm1} we see that to find the lower
limit we may consider the average contraction rate within the interval $[q_{0},q_{1}]$ and find
\begin{align*}
\dim_{H}(\Sigma_{s,\nu}^{\ast (n)}) &\geq \underline{\delta}(\mathbf{P})=\!\! \frac{\int_{q_{0}}^{q_{1}} \delta(\mathbf{P},q) \; dq}{q_{1} -q_{0}}
=\frac{n(\tilde{q}_{2}-q_{0})+ (\tilde{q}_{1}-\tilde{q}_{2})}{q_{1}-q_{0}}\\
&=
\frac{\tilde{q}_{1}+(n-1)\tilde{q}_{2}-nq_{0}}{q_{1}-q_{0}}= \frac{  \frac{\tilde{q}_{1}}{q_{1}}+(n-1)\frac{\tilde{q}_{2}}{q_{1}}-n\frac{q_{0}}{q_{1}}  }{ 1-\frac{q_{0}}{q_{1}} }.
\end{align*}
Inserting for the ratios $\tilde{q}_{1}/q_{1},\tilde{q}_{2}/q_{1},q_{0}/q_{1}$ from
\eqref{eq:tv2}, \eqref{eq:viech}, \eqref{eq:viech4} 
the bound becomes $D$ in \eqref{eq:D} after tedious rearrangements, 
verifying \eqref{eq:hd2}.

We finally estimate the packing dimension. 
Define $\tilde{q}_{1,N}, \tilde{q}_{2,N}$ within $[q_{N},q_{N+1}]$
corresponding to $\tilde{q}_{1}, \tilde{q}_{2}$ in $[q_{0},q_{1}]$ 
likewise as in the proof of Theorem~\ref{thm1}.
Then
\[
\dim_{P}(\Sigma_{s,\nu}^{\ast (n)}) \geq  
\overline{\delta}(\mathbf{P}) \geq \max\{ U,V\} 
\]
where $U$ resp. $V$ are the average limit contraction rates
in the intervals $[q_{0}, \tilde{q}_{2,N}]$ resp. $[q_{0}, \tilde{q}_{1,N}]$
as $N\to\infty$. We show $U\geq E, V\geq F$ with
$E,F$ from \eqref{eq:ee}. 
Since we identified $\underline{\delta}(\mathbf{P})$
as the average contraction rate in any interval $[q_{0},q_{N}]$ and $q_{0}=o(q_{N})$ as $N\to\infty$, we evaluate
\begin{align*}
U&= \lim_{N\to\infty} \frac{ \int_{q_{0}}^{\tilde{q}_{2,N}} \delta(\mathbf{P},q)\; dq }{ \tilde{q}_{2,N}-q_{0}  } =\lim_{N\to\infty} \frac{ \underline{\delta}(\mathbf{P})(q_{N}-q_{0}) + n(\tilde{q}_{2,N}-q_{N}) }{\tilde{q}_{2,N}-q_{0}} \\ &=
\lim_{N\to\infty} \frac{ \underline{\delta}(\mathbf{P})q_{N} + n(\tilde{q}_{2,N}-q_{N}) }{\tilde{q}_{2,N}} =
n + (\underline{\delta}(\mathbf{P})-n)\cdot \lim_{N\to\infty} \frac{q_{N}}{ \tilde{q}_{2,N} } 
\end{align*}
and since $q_{N}/\tilde{q}_{2,N}$
is independent of $N$, inserting $\underline{\delta}(\mathbf{P})\geq D$ this equals
\[
U\geq n + \frac{q_{0}}{ \tilde{q}_{2} } (\underline{\delta}(\mathbf{P})-n)
\geq n + (D-n)\frac{q_{0}}{ \tilde{q}_{2} } = n+ (D-n) \frac{(n+1)(s+(n-1)\nu+1)}{(1-s)(n+1+s-\nu)}= E,
\]
where we used \eqref{eq:tv2}, \eqref{eq:viech}, \eqref{eq:viech4} to 
evaluate $q_{0}/\tilde{q}_{2}$.
For $V$ a similar calculation shows
\begin{align*}
V&= \lim_{N\to\infty} \frac{ \int_{q_{0}}^{\tilde{q}_{1,N}} \delta(\mathbf{P},q)\; dq }{ \tilde{q}_{1,N}-q_{0}  } 
= \lim_{N\to\infty} \frac{ \int_{q_{0}}^{ q_{N+1}} \delta(\mathbf{P},q)\; dq - \int_{ \tilde{q}_{1,N} }^{ q_{N+1}} \delta(\mathbf{P},q)\; dq  }{ \tilde{q}_{1,N}-q_{0}  }\\ &=
\lim_{N\to\infty} \frac{(q_{N+1}-q_{0}) \underline{\delta}(\mathbf{P})}{\tilde{q}_{1,N}-q_{0}}= 
\underline{\delta}(\mathbf{P})\lim_{N\to\infty} \frac{q_{N+1} }{\tilde{q}_{1,N}}= \underline{\delta}(\mathbf{P})\frac{q_{1}}{ \tilde{q}_{1} }\\
&
\geq D \frac{q_{1}}{ \tilde{q}_{1} }= D\frac{n+1}{n+1+2s+(n-1)\nu}=F,
\end{align*}
as claimed, where we used \eqref{eq:viech} in the last line.
Theorem~\ref{thm2} is proved.

\subsection{Non-existence in Theorem~\ref{t3}}  \label{non2}

To complete the proof of Theorem~\ref{t3},
the following remains to be proved.

\begin{theorem}
	With the notation of Theorem~\ref{t3},
	for $\nu\notin [-s/n,\nu_{0}]$ the set $\Sigma_{s,\nu}^{(n)}$ is emtpy,
	i.e. there is no $\ux$ inducing
	equality in \eqref{eq:4} and with $\uv_{1}= s, \ov_{n+1}= \nu$.
\end{theorem}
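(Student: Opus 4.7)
I would follow the same two-step template as Section~\ref{non1} (proof of Theorem~\ref{t10}), carried out dually. The case $\nu<-s/n$ requires no real argument: the right-hand inequality in \eqref{eq:khin2} says $\ov_{n+1}\geq -\uv_1/n$, so $\uv_1=s$ alone forces $\nu\geq -s/n$, regardless of whether equality holds in \eqref{eq:4}. So the only content is to rule out $\nu>\nu_0$ when $\uv_1=s$, $\ov_{n+1}=\nu$, and equality holds in \eqref{eq:4}; and by the equivalence already used, this is the same as pinning down $\ov_1$ to the value $(1-n)(s+n\nu)/(n+1+2s+(n-1)\nu)$ and then contradicting it.

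\textit{Step 1 (forcing the local pattern at large $q_1$).} Since $\uv_1=s$, one can pick arbitrarily large $q_1$ with $L_1(q_1)=sq_1+o(q_1)$, chosen to be local minima of $L_1$. Running leftward from such a $q_1$, let $q_1-\tilde q$ be the first point where $L_1$ and $L_2$ meet. Using that $L_2$ has slope at most $+1$ and that $\ov_1\geq L_2(q_1-\tilde q)/(q_1-\tilde q)-o(1)$, then the identity \eqref{eq:bsum} and the fact that $L_1$ has slope $+1$ on $[q_1-\tilde q,q_1]$ (away from $q_1$ where it turns) allow one to bound $L_{n+1}(q_1)$ from above in terms of $s,\gamma,\tilde q,q_1$, exactly dually to \eqref{eq:kukuk}–\eqref{eq:specht}. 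Passing to limits and using $\ov_{n+1}\leq L_{n+1}(q_1)/q_1+o(1)$ produces, after rearrangement, precisely the inequality \eqref{eq:4}. Consequently equality in \eqref{eq:4} forces equality asymptotically in every step, which pins down $L_j(q_1)/q_1$ to the values $s$, $\gamma$, $\vartheta$, $\nu$ predicted by Theorem~\ref{thm2}, and pins down the graph on $[\tilde q_1,q_1]$ (with $\tilde q_1=q_1-\tilde q$) up to $o(q_1)$ to look exactly like the right-hand portion of Figure~4.

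\textit{Step 2 (propagating leftward gives a contradiction when $\nu>\nu_0$).} Let $r<\tilde q_1$ be the first coordinate to the left of $\tilde q_1$ where $L_1$ and $L_2$ meet again, and set $I=[r,\tilde q_1]$. Split into two cases exactly as in Section~\ref{non1}. In Case~1, $L_2$ and $L_3$ do not meet in $I$; Definition~\ref{defnt}(iv) applied after replacing $\mathbf L_{\ux}$ by a close template via Theorem~\ref{dop} forces a unique switch inside $I$ at which $L_1,L_2$ exchange slopes $-n$ and $+1$ (a second such switch would produce a local minimum of $L_2$ that is not a local maximum of $L_1$, violating convexity of $F_1+F_2$). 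Averaging gives $L_2$ the mean slope $(1-n)/2<0$ on $I$, which pulls $L_2(r)/r$ strictly below its value at $\tilde q_1$, contradicting $\ov_1=\gamma$ unless $|I|=o(\tilde q_1)$, a case ruled out by the standard passage to $\varepsilon$-scaled sub-intervals. In Case~2, $L_2,L_3$ do meet in $I$, and the graph of $L_2$ continues further to the left with slope $+1$ up to a point $y$ where $L_2(y)/y=s+o(1)$, then $L_2$ decays with slope $-n$ until it meets $L_1$ at $r$; moreover $L_2(r)\geq L_3(\tilde q_1)-(\tilde q_1-r)$ dual to \eqref{eq:ti}. For the extremal case $\nu=\nu_0$ all of these inequalities are asymptotic equalities and $L_1(r)/r$ equals $s$ again (the closure of the period).

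\textit{Main obstacle.} The substantive part is the quantitative comparison at the end of Case~2: for $\nu>\nu_0$ (equivalently $\vartheta<\vartheta_0$ with $s$ fixed, by \eqref{eq:tv}), one must show that the first point $z<r$ at which $L_1(z)/z=s+o(1)$ lies at a definite positive distance from $r$, so that $L_1$ has continued to decay with slope $+1$ past $r$ and so $L_1(r)/r<s-\varepsilon_0$ for some $\varepsilon_0=\varepsilon_0(s,\nu,\nu_0)>0$, contradicting $\uv_1=s$. Quantifying this reduces, after inserting the explicit meeting-point formulas from \eqref{eq:tv2}, \eqref{eq:viech}, \eqref{eq:viech4}, to a one-variable monotonicity check in $\vartheta$ (equivalently in $\nu$) with equality precisely at $\nu=\nu_0$; this mirrors the calculation that $\tilde q_3$ as a function of $\mu$ is monotone in the proof of Theorem~\ref{thm1}, and is the only place where the defining quadratic \eqref{eq:rr} of $\nu_0$ is actually used. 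Once that comparison is in hand, the contradiction in both cases is complete, and together with the trivial case $\nu<-s/n$ this finishes the non-existence part.
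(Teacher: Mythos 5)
Your skeleton is exactly the one the paper intends: dispose of $\nu<-s/n$ via \eqref{eq:khin2}, then dualize the two-step argument of Section~\ref{non1} (force the pattern of $[\tilde{q}_{1},q_{1}]$ of Figure~4 at suitable large $q_{1}$, then contradict at the previous meeting point of $L_{1},L_{2}$, using the quadratic defining $\nu_{0}$ only in the final comparison). However, several of your inequalities and slopes are oriented the wrong way, and as literally written Step 1 does not produce \eqref{eq:4}. At a point $q_{1}$ where $L_{1}(q_{1})/q_{1}$ is close to its liminf $s<0$, the function $L_{1}$ approaches $q_{1}$ from the left with slope $-n$ (a ratio-minimum is a switch from $-n$ to $+1$), not with slope $+1$; this slope $-n$ is what gives the dual of \eqref{eq:specht}, namely $\tilde{q}\leq \bigl(\tfrac{\gamma-s}{n+\gamma}+o(1)\bigr)q_{1}$. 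More importantly, the chain must bound $L_{n+1}(q_{1})$ from \emph{below}, via $L_{n+1}(q_{1})\geq -\tfrac{L_{1}(q_{1})+L_{2}(q_{1})}{n-1}-O(1)$ combined with the upper bound $L_{2}(q_{1})\leq L_{2}(q_{1}-\tilde{q})+\tilde{q}\leq (\gamma+o(1))(q_{1}-\tilde{q})+\tilde{q}$, and then use the always-valid $\ov_{n+1}\geq L_{n+1}(q_{1})/q_{1}-o(1)$; your inequality $\ov_{n+1}\leq L_{n+1}(q_{1})/q_{1}+o(1)$ is false at a generic large $q_{1}$ (a limsup bounds values only from above), and an upper bound on $L_{n+1}(q_{1})$ could at best yield an upper bound on $\ov_{n+1}$, which is not what \eqref{eq:4} asserts. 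With the orientations corrected, the chain does rearrange exactly to \eqref{eq:4}, and the equality-forcing conclusion you draw is then justified.

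The same orientation problem recurs in Step 2. In Case 1, the mean slope $(1-n)/2$ of $L_{1},L_{2}$ on $I=[r,\tilde{q}_{1}]$ gives $L_{1}(r)-\gamma r=\bigl(\gamma+\tfrac{n-1}{2}\bigr)(\tilde{q}_{1}-r)+o(q_{1})$, so the contradiction is that $L_{1}(r)/r$ exceeds $\gamma=\ov_{1}$ by a definite amount (one needs $\gamma>-(n-1)/2$, equivalently $\nu<1$, with the boundary case treated separately); a ratio falling \emph{below} $\gamma$, as you state, does not contradict a limsup. In Case 2 the roles are swapped: it is $L_{1}$ (dual to $L_{n+1}$ in Theorem~\ref{t10}) that descends leftward along forward slope $+1$ until its ratio reaches $s+o(1)$ — it is capped by $\uv_{1}=s$, not $L_{2}$ — and then must climb (forward slope $-n$) toward $L_{2}$; for $\nu>\nu_{0}$ the contradiction, dual to contradicting $\uv_{n+1}=\sigma$ in Theorem~\ref{t10}, is that $L_{1}$'s ratio overshoots $\gamma=\ov_{1}$ strictly before the meeting point $r$, not that $L_{1}(r)/r$ drops below $s$ (at $r$ the ratio of $L_{1}$ has been increasing, so it cannot undershoot $s$ there). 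These are fixable dualization slips rather than a missing idea — your identification of the final quantitative monotonicity check in $\nu$ with equality at $\nu_{0}$ is the right closing step — but as written the key inequalities point the wrong way and the proof does not go through without reversing them.
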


For $\nu<-s/n$ again we get a contradiction to \eqref{eq:khin2}.
So it remains to exclude $\nu>\nu_{0}$. This can be done very similarly
as excluding $\mu<\mu_{0}$ in Theorem~\ref{t10} by some dual setup.
Again using the method from the proof
of~\cite[(7)]{ichnyj} one can show that for arbitrarily large $q$,
up to $o(q)$ as $q\to\infty$, we must
have a situation as in the interval $[\tilde{q}_{1},q_{1}]$ 
in Figure~4.  
Finally, for $\nu>\nu_{0}$ we again derive a contradiction when
considering the next meeting point of $L_{1}, L_{2}$ 
to the left of $\tilde{q}_{1}$.
We leave the details to the reader.
Again we can deduce equality in the dimension formulas for $s<0, \nu=\nu_{0}$
since then the entire combined graph must essentially be built
up from consecutive periodical patterns as in Figure~4.

\section{Final comments relating to work of Bugeaud, Laurent and Roy}  \label{comments}

According to the comments below~\cite[Theorem~3]{bulau}, sharpness
of \eqref{eq:1}, \eqref{eq:2} are respectively equivalent to
optimality of certain systems of inequalities.

Identify $\ux\in\mathbb{R}^n$ 
with its projective image in $P^n(\mathbb{R})$.
Let $0\leq d\leq n-1$ an integer. 
We denote by $\omega_{d}=\omega_{d}(\ux)$ the supremum of the real numbers
$u$ for which there exist infinitely many rational linear subvarieties $L\subseteq P^n(\mathbb{R})$ such that $\dim(L)=d$ and  $d(\ux,L)\leq H(L)^{-1-u}$, where $H(L)$ is the Weil height of any system of Pl\"ucker
coordinates of $L$ and $d(A,B)$ denotes the distance
of two projective sets $A,B\subseteq P^n(\mathbb{R})$. 
Then $\omega_{0}$ corresponds
to the classical exponent $\om$, and $\omega_{n-1}$ to our $\os$.
Translating~\cite[Proposition~3.1]{royspec} to our formalism, 
any $\omega_{d}$ can be written as an expression 
involving certain $\varphi_{j}(q)$ via
\begin{equation}  \label{eq:omd}
\frac{1}{1+\omega_{d}}= \limsup_{q\to \infty} \frac{n-d-
	\sum_{j=d+2}^{n+1} \varphi_{j}(q)}{n+1}, \qquad\qquad 0\leq d\leq n-1,
\end{equation}
upon the convention $\omega_{d}=\infty$ if the right hand side becomes $0$. 

Analyzing the proof of \eqref{eq:2} in~\cite{bulau}, 
identity is equivalent to identities
\begin{equation}  \label{eq:vi}
\omega_{1}=\frac{\om+ \wo}{1-\wo}
\end{equation}
and
\begin{equation}  \label{eq:iii}
\omega_{d+1}=\frac{(n-d)\omega_{d}+1}{n-d-1}, 
\qquad\qquad 1\leq d\leq n-2.
\end{equation}
For general $\ux\in\mathbb{R}^n$, 
the left hand sides are bounded below by the right hand sides 
in \eqref{eq:vi}, \eqref{eq:iii} for every $0\leq d\leq n-2$, so we have inequalities.
A special case of a result by Roy~\cite[Theorem~2.3]{royspec} implies
that for any reasonable choice of $\omega_{0}=\om$, 
there exists $\ux\in\mathbb{R}^n$ that simultaneously satisfies
all identities in \eqref{eq:iii} (for $d=0$ as well). 
In fact, Marnat~\cite{marnat} evaluated 
Hausdorff and packing dimensions of the corresponding 
sets of $\ux\in\mathbb{R}^n$ with the aid of Theorem~\ref{deep}. 
However, the condition \eqref{eq:vi} 
on $\wo$ remains. Indeed, the restricitons 
from Theorem~\ref{t2}
show that we cannot have this additional identity in certain cases.
We mention that if \eqref{eq:vi} and an extension
of \eqref{eq:iii} also valid for $d=0$ hold, then we could conclude
$\wo=1/n$. In particular, if $\om>1/n$ then
our vectors $\ux$ in Theorem~\ref{t2} do not have these properties
as it can be checked that they induce $\wo>1/n$. 
Similarly, looking at the proof of \eqref{eq:1} in~\cite{bulau} 
we check that
equality happens if and only if
\begin{equation}  \label{eq:hiatus}
\omega_{d-1}=\frac{d\omega_{d}+1}{\omega_{d}+d-1}, 
\qquad\qquad 1\leq d\leq n-2,
\end{equation}
and
\begin{equation}  \label{eq:nn1}
\omega_{n-2} = \frac{ (\wos-1)\os }{ \os + \wos }.
\end{equation}
Again in general there are just inequalities in \eqref{eq:hiatus}
and \eqref{eq:nn1}, with the right hand sides not exceeding the left.
Again \eqref{eq:hiatus} can be satisfied for $\ux$
by Roy's~\cite[Theorem~2.3]{royspec}, however 
we are still left with a condition on $\wos$. Again, 
if we impose \eqref{eq:nn1} and an extension of \eqref{eq:hiatus} 
valid for $d=n-1$ as well, we conclude
$\wos=n$, so for $\os>n$ the examples in Theorem~\ref{t0}
do not satisfy these properties.


\end{document}